\sc \hspace{1.5in}Fibre integrals}
\newtheorem{thm}{\bf Theorem}[section]
\newtheorem{lem}[thm]{\bf Lemma}
\newtheorem{remark}[thm]{\bf Remark}
\newtheorem{example}[thm]{\bf Example}
\newtheorem{proposition}[thm]{\bf Proposition}
\newtheorem{cor}[thm]{\bf Corollary}
\newtheorem{dfn}[thm]{\bf Definition}
\def\bI{{\bf J}}
\def\bz{{\bf z}}
\def\b1{{\bf 1}}
\def\bB{{\bf B}}
\def\bT{{\bf T}}
\def\bP{{\bf P}}
\def\bQ{\ensuremath{\mathbb{Q}}}
\def\bR{\ensuremath{\mathbb{R}}}
\def\bk{{\bf k}}
\def\bM{{\bf M}}
\def\bN{{\bf N}}
\def\bZ{\ensuremath{\mathbb{Z}}}
\def\bC{\ensuremath{\mathbb{C}}}
\def\cM{\mathcal M}
\def\t{\mathsf t}
\newcommand{\coh}{\operatorname{coh}}
\newcommand{\C}{\ensuremath{\mathbb{C}}}
\newcommand{\mathP}{\ensuremath{\mathbb{P}}}
\newcommand{\R}{\ensuremath{\mathbb{R}}}
\newcommand{\Z}{\ensuremath{\mathbb{Z}}}
\newcommand{\mE}{\ensuremath{\mathcal{E}}}
\newcommand{\mF}{\ensuremath{\mathcal{F}}}
\newcommand{\mG}{\ensuremath{\mathcal{G}}}
\def\mO{{\mathcal O}}
\def\FA{{\mathfrak A}}
\def\Fb{{\mathfrak b}}
\def\FB{{\mathfrak B}}
\def\FX{{\mathfrak X}}
\newcommand{\Ext}{\mathop{\mathrm{Ext}}\nolimits}
\def\Blat{\mbox{\it \raise2pt\hbox{"}\kern-2pt H}}
\def\lvup{\rlap{\ ${}^{q\atop{\hbox{${}^{\vee}$}}}$}\cdots}
\def\cal{\mathcal}
\numberwithin{equation}{section} 
\begin{document}
\author{Susumu TANAB\'E}               
\title [Period integrals associated to  an affine hypersurface]{Period integrals associated to  an affine Delsarte type hypersurface}               

\begin{abstract}
	
	We calculate the period integrals for a special class of  affine hypersurfaces (deformed Delsarte hypersurfaces)
	in an algebraic torus  by the aid  of their Mellin transforms.
	A description of the relation between poles of Mellin transforms of period integrals and 
	the mixed Hodge
	structure of the cohomology of the
	hypersurface is given. By interpreting the period integrals as solutions to Pochhammer
	hypergeometric differential equation, we calculate concretely the irreducible monodromy 
	group of period integrals that correspond to the compactification of the affine hypersurface
	in a complete simplicial toric variety. As an application of the equivalence between oscillating integral for Delsarte polynomial and  quantum cohomology of a weighted projective space $\mathP_\bB$,
	we establish an equality between its Stokes matrix and the Gram matrix of the full exceptional collection on $\mathP_\bB$.
\end{abstract}
\maketitle


\noindent
%
{
\center{\section{Introduction}}
}

In this note, we propose a simple method to calculate concretely
period integrals associated to an  affine non-compact hypersurface 
for which the number of 
terms participating in its defining equation is larger than the dimension of the ambient algebraic torus  by two 
(deformed Delsarte hypersurface). A monomial deformation of a Fermat type polynomial belongs to this class. 
As for the historical reason of the naming, see Remark \ref{delsarte}.
As an important example of the polynomial under consideration, we point out the following Landau-Ginzburg potential
 familiar in the mirror symmetry setting \cite[4.2]{CK},
$$ f_0(x) = \sum_{j=1}^n x_j  + \prod_{j=1}^n \frac{1}{x_j}.$$

We establish an expression of the position of poles of the Mellin
transform with the aid of the mixed Hodge structure of cohomology groups associated to an
hypersurface $Z_f$ defined by a $\Delta-$regular polynomial \cite{Baty1} (See \S \ref{mellin}, Proposition \ref{prop31}). The quest to relate the
asymptotic behaviour of a period integral with the Hodge structure
of the algebraic variety goes back to \cite{Var} where Varchenko
established the equivalence of the asymptotic Hodge structure and
the mixed Hodge structure in the sense of Deligne-Steenbrink for
the case of plane curves  and (semi-)quasihomogneous
singularities.

\par


 In this note, we illustrate the utility of this approach
in taking the example of a hypersurface in a torus defined by
so called simpliciable polynomial
(see Definition ~\ref{dfn1}). 
\footnoterule

\footnotesize{AMS Subject Classification: 14M109 (primary), 32S25, 32S40
(secondary).

Key words and phrases: affine hypersurface,
Hodge structure, hypergeometric function.}

 \footnotesize{Partially supported by Max Planck Institut f\"ur Mathematik, T\"UBITAK 1001 Grant No. 116F130 "Period integrals associated to algebraic varieties."}
\normalsize



\newpage

For this class of hypersurfaces, we  can present period integrals as solutions to so called Pochhammer hypergeometric equation
(\ref{RkI}) by using their Mellin transform. The solution space to this equation has reducible monodromy, but it is possible to subtract 
a  solution subspace with irreducible monodromy (Theorem \ref{thm53}) that corresponds to period integrals of the compactified quasi-smooth hypersurface $\bar Z_f$ in a complete simplicial toric variety ${\mathbb P}_{\Delta(f)}$ (\cite[Definition 3.1]{BatyCox}).

 In \cite[Theorem 14.2]{Baty1} , \cite[Theorem 8]{Stienstra}, \cite[3.4]{KM}, authors gave interpretations of  period integrals of affine hypersurfaces as A-hypergeometric functions that form a vector space with dimension equal to the volume of the Newton polyhedron of $f$ . They did not, however, discuss the reducibility/irreducibility of the global monodromy. 
In fact our restriction on number of terms of the polynomial $f$ is motivated by the fact that in this setting the A- HG system is reduced an univariable  Pochhammer HG equation whose monodromy can be concretely calculated (see Theorem \ref{thm53}). If we consider a Laurent polynomial with more terms than considered here, it is necessary to  deal with a multivariable  holonomic system that makes the calculation of monodromy far more difficult as it would presume the monodromy as a representation of the fundamental group of the holomorphic domain of A-HGF ( \cite{Tan041}, \cite{Tan07}, \cite{Ta17}) .

The major technical novelty of this note, in comparison with the previous works 
concerning period integrals in question ( \cite{Salerno}, \cite{Dominguez}, \cite{Gahrs}), lies in the full use of their Mellin-Barnes integral representation
instead of Gauss-Manin system or Griffiths-Dwork method (Corollary \ref{cor:IukxI}). See Remark \ref{salerno}. This integral representation is nothing but a special case of general formula of Horn type HG equations in several variables satisfied by period integrals  \cite{Tan07}.
This method has been already used in \cite{Tan04},\cite{TaU13}. It  essentially relies on a combination of simple topological argument on cycles,  definition of the $\Gamma-$function and expression of Dirac $\delta-$type function by a Laplace transform. 
In Corollary \ref{cor:IukxI}, an explicit description of the dependence on the integrand differential form (a representative of the cohomology) allows us to establish a comparison result Lemma \ref{LqtildeLq} on filtrations defined by Mellin transform, from one side, and by Brieskorn lattice, from the other \cite{DS}.

The relation between the reducible monodromy group  of period integrals and the Stokes matrix of corresponding oscillating integrals
has been discussed in \cite[Theorem 1.1, 1.2]{Tan04}, \cite[Theorem 5.1]{TaU13}.  As it is shown in \S \ref{oscillating} of this note, Batyrev's quotient ring $R_f^+$ (\ref{RF+}) is well adapted to the description of  the space of  oscillating integrals 
(\ref{JgGamma}).

In \cite[ 4.2]{Horja}, relying on the Stanley-Reisner ring method \cite[Theorem 6]{Stienstra},  the author subtracts period integrals of compact complete intersection from those of affine complete intersection treated in  \cite{TaU13}. R.P. Horja discusses the analytic continuation of these period integrals  and confirms  a correspondence  predicted  by Kontsevich \cite{Kontsevich_HAMS} in connection with homological mirror symmetry conjecture.  None the less he did not give a complete description of (irreducible) global monodromy group of period integrals. Our matrices (\ref{hinf}), (\ref{h0}) in Lemma \ref{levelt} (irreducible monodromy) and
 (\ref{eq:hinfty}),  (\ref{eq:h0}) (reducible monodromy) give a global monodromy representation of period integrals.

In \cite{CG}, the authors  studied the irreducible monodromy acting on the structure sheaf of an affine complete intersection treated in  \cite{TaU13} that they subtracted  from 
the vector space with reducible monodromy action. This subtraction procedure to get an irreducible monodromy representation has been discussed in \cite{Gol} before. Our note has genetic similarity with  
\cite{CG}, even though the methods used are quite different. We use the Mellin transform of period integrals, while Corti-Golyshev
relied uniquely on \cite{DX1} to prove their main theorems \cite[Theorem 1.1, 1.3]{CG} 
that correspond to our Proposition \ref{prop61}, Remark \ref{WHnZ}, Theorem \ref{hln}.

The contents of this note can be summarised as follows.
In \S 1, we give a review on the mixed Hodge structure of the cohomology associated to an affine hypersurface according to \cite{Baty1}. In \S \ref{simplicial} under the main {\bf Assumption} imposed on $f(x)$, principal integral data $\gamma$ (\ref{gamma}) and $\bB$ (\ref{sumBq1}) are introduced. 
In \S \ref{mellin}, we establish Proposition \ref{prop31} that calculates concretely the Mellin transform of period integrals.
In \S \ref{oscillating}, we write down a differential equation with irregular singularities satisfied by oscillating integrals
associated to a Laurent  polynomial (or Landau-Ginzburg potential) $f_0.$
In \S \ref{filtration}, we examine the relation between a Mellin transform based filtration of period integrals and Kashiwara-Malgrange filtration on the Brieskorn lattice by \cite{DS}. In the core part of the note \S \ref{HGgroup}, a concrete representation of monodromy group for period integrals in terms of Pochhammer HGF and its Hermitian invariant are described. In \S \ref{WHP}, we discuss Dubrovin's conjecture \cite{Dub2} on the Stokes matrix for the quantum cohomology of the weighted projective space $\mathP_{\bB}$ treated in  \S \ref{oscillating}.
\par
I would like to express my gratitude to Profs. M.Oka, K.Saito, K. Takeuchi, K.Ueda, D.Cox, J.Tevelev, E.Materov, F.Beukers who gave me various occasions to discuss on this subject
and to formulate this hypersurface version of the calculus. I also express special gratitude to Dr. Elif Segah \"Ozta\c{s} who prepared scrupulously the figure for Example \ref{fig:quotientring}. The complete intersection version in preparation \cite{Tan041} shall describe the Kashiwara-Malgrange filtration of period integrals by the aid of Mellin transform in several variables.
Special thanks go to Prof. T. Terasoma who indicated the possibility to read off the mixed Hodge structure of cohomology from the Mellin transform of period integrals. It is worth noticing that I got many valuable suggestions from his inspiring article \cite{Terasoma}.

{
\center{\section{Hodge structure of the cohomology group of a hypersurface
in a torus}\label{Hodge}}
}
In this section, we review fundamental notions on
the Hodge structure of the cohomology group of a hypersurface
in a torus after \cite{Baty1}, \cite{DX1}.

\par Let $\Delta$ be a convex $n-$dimensional convex polyhedron in
${\bR}^n$ with all vertices in $\bZ^n.$
Let us define a ring $S_\Delta \subset$
$\bC[u, x^{\pm}] := \bC[u,x_1^{\pm}, \cdots, x_n^{\pm}]$ of
the Laurent polynomial ring as follows
\begin{equation}
S_\Delta:= \bC \oplus \bigoplus_{\frac{ \alpha}{k} \in \Delta,
\exists k \geq 1} \bC \cdot u^kx^{ \alpha}.
\label{Sdelta}
\end{equation}

\par
We denote by $\bar \Delta(f)$ the convex hull of $\{0\} \in \bZ^{n+1}$ and the set $\{(1,{ \alpha}) \in \bZ^{n+1};
{ \alpha} \in supp(f) \}$ that we call the Newton polyhedron of a Laurent
polynomial  (further simply called polynomial) $F(u,x)= uf(x)-1.$  For $\bar \Delta(f)$,  we introduce the following Jacobi ideal
\begin{equation}\label{jideal}
 J_{f,\Delta}= \bigl<\theta_u F, \theta_{x_1}F,
\cdots,   \theta_{x_n}F\bigr>\cdot S_{ \Delta(f)}.
\end{equation}
with $ \theta_{u}=u \frac{\partial }{\partial u} $, $\theta_{x_i}=x_i \frac{\partial }{\partial x_i}, i\in [1;n]$.

From here on, we shall further use the notation $i \in [m_1; m_2] \Leftrightarrow i \in \{m_1, \cdots,  m_2\} $ for two integers $m_1 <m_2.$

Let $\tau$ be a $\ell-$dimensional face of $\Delta(f)$, the convex hull of $\{{ \alpha} \in \bZ^{n};
{ \alpha} \in supp(f) \}$ in $\bR^n$,
and define
\begin{equation}
 f_\tau(x)= \sum_{ \alpha \in \tau \cap supp(f)}a_{ \alpha}
x^{ \alpha},
\end{equation}
where $f(x) =\sum_{ \alpha \in supp(f)}a_{ \alpha}
x^{ \alpha}.$
The Laurent polynomial $f(x)$ is called {\it $\Delta-$ regular}, if
$\Delta(f)=\Delta$ and for every $\ell-$dimensional face $\tau
 \subset \Delta(f)$ ($\ell >0$) the polynomial equations:
$$ f_\tau(x)= \theta_{x_1} { f_\tau}=
\cdots=  \theta_{x_n} { f_\tau}=0,$$
have no common solutions in $\bT^n = (\bC^\times)^n.$

\begin{proposition} (\cite[Theorem 4.8]{Baty1}.)
Let $f$ be a Laurent polynomial such that $\Delta(f)=\Delta.$
Then the following conditions are equivalent.
\par
1) The elements $uf, u\theta_{x_1}  f,
\cdots,  u\theta_{x_n}  f $ give rise to a regular sequence
in $S_{\Delta}$
\par
2) For the Jacobi ideal  $J_{f,\Delta}$  \eqref{jideal},
the following equality holds 
$$ dim \bigl(\frac{ S_\Delta}{J_{f,\Delta}}\bigr)= n! vol(\Delta).$$
\par
3) $f$ is $\Delta-$regular.
\label{prop: laurent}
\end{proposition}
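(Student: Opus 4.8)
The plan is to exploit the description of $S_\Delta$ as an affine semigroup ring. Indeed $S_\Delta$ is the semigroup $\bC$-algebra of $C(\Delta)\cap\bZ^{n+1}$, where $C(\Delta)=\{(k,\beta)\in\bR_{\ge 0}\times\bR^{n}:\beta\in k\Delta\}$ is the cone over $\Delta$ and $u^{k}x^{\alpha}$ corresponds to $(k,\alpha)$. Since $C(\Delta)$ is a rational polyhedral cone the semigroup $C(\Delta)\cap\bZ^{n+1}$ is saturated, so $S_\Delta$ is a normal affine semigroup ring, hence a Cohen--Macaulay domain of Krull dimension $n+1$ by Hochster's theorem. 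It is graded by the $u$-degree $k$, with $(S_\Delta)_{k}=\bigoplus_{\alpha\in k\Delta\cap\bZ^{n}}\bC\,u^{k}x^{\alpha}$, so the Hilbert function $k\mapsto\dim_\bC(S_\Delta)_{k}$ is the Ehrhart polynomial of $\Delta$, of degree $n$ with leading coefficient $\mathrm{vol}(\Delta)$; consequently the multiplicity of $S_\Delta$ is $e(S_\Delta)=n!\,\mathrm{vol}(\Delta)$, and $\mathrm{Proj}\,S_\Delta=\bP_\Delta$. Set $g_{0}=uf$ and $g_{i}=u\theta_{x_{i}}f$ for $i=1,\dots,n$; since $\mathrm{supp}(f)\subset\Delta(f)=\Delta$ these lie in $(S_\Delta)_{1}$, and with $F=uf-1$ one has $\theta_{u}F=uf$ and $\theta_{x_{i}}F=u\theta_{x_{i}}f$, so $g_{0},\dots,g_{n}$ generate the Jacobi ideal $J_{f,\Delta}$.

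To prove that 1) implies 2): if $g_{0},\dots,g_{n}$ is a regular sequence then, the $g_{i}$ being homogeneous of degree one, $\mathrm{Hilb}_{S_\Delta/J_{f,\Delta}}(t)=(1-t)^{n+1}\,\mathrm{Hilb}_{S_\Delta}(t)$; writing $\mathrm{Hilb}_{S_\Delta}(t)=h(t)/(1-t)^{n+1}$ with $h$ a polynomial, the right-hand side is $h(t)$, whence $\dim_\bC\bigl(S_\Delta/J_{f,\Delta}\bigr)=h(1)=e(S_\Delta)=n!\,\mathrm{vol}(\Delta)$. To prove that 2) implies 1): the equality in 2) forces $S_\Delta/J_{f,\Delta}$ to be finite-dimensional over $\bC$, so $g_{0},\dots,g_{n}$ is a homogeneous system of parameters of the $(n+1)$-dimensional ring $S_\Delta$, and in the Cohen--Macaulay ring $S_\Delta$ a homogeneous system of parameters is a regular sequence.

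To prove 1) $\Leftrightarrow$ 3), pass to $\bP_\Delta=\mathrm{Proj}\,S_\Delta$. By the above, $g_{0},\dots,g_{n}$ is a regular sequence iff $S_\Delta/J_{f,\Delta}$ is Artinian iff the closed subscheme $V_{+}(J_{f,\Delta})\subset\bP_\Delta$ cut out by $g_{0},\dots,g_{n}$ is empty. Stratify $\bP_\Delta=\bigsqcup_{\tau\preceq\Delta}O_{\tau}$ into torus orbits indexed by the faces $\tau$ of $\Delta$, with $O_{\tau}\cong\bT^{\dim\tau}$. On the orbit closure $V(\tau)$ the monomial sections $u x^{\alpha}$ with $\alpha\notin\tau$ vanish identically, while those with $\alpha\in\tau$ restrict to the homogeneous coordinates of the toric variety attached to $\tau$; hence on $O_{\tau}$ the sections $uf$ and $u\theta_{x_{i}}f$ are governed respectively by $f_{\tau}$ and $\theta_{x_{i}}f_{\tau}$. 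Thus $g_{0},\dots,g_{n}$ have a common zero on $O_{\tau}$ exactly when the system $f_{\tau}=\theta_{x_{1}}f_{\tau}=\dots=\theta_{x_{n}}f_{\tau}=0$ has a solution in $\bT^{n}$; for a vertex $\tau$ this is impossible because $f_{\tau}$ is a nonzero monomial, while for $\dim\tau>0$ it is precisely the failure of $\Delta$-regularity along $\tau$. Running over all faces, $V_{+}(J_{f,\Delta})=\emptyset$ iff $f$ is $\Delta$-regular, which together with 1) $\Leftrightarrow$ 2) establishes the equivalence of the three conditions.

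The step I expect to demand the most care is the last one: making precise that on a torus orbit $O_{\tau}$ of dimension $\dim\tau<n$ the restriction of $u\theta_{x_{i}}f$ is controlled by $\theta_{x_{i}}f_{\tau}$ for \emph{every} $i=1,\dots,n$, although only $\dim\tau$ of these logarithmic derivatives are genuinely independent there. Concretely one checks that each affine-linear relation $\sum_{i}m_{i}\alpha_{i}=c$ satisfied by all exponents $\alpha\in\tau$ yields only the relation $\sum_{i}m_{i}\,\theta_{x_{i}}f_{\tau}=c\,f_{\tau}$, which is vacuous on $\{f_{\tau}=0\}$, so that the simultaneous vanishing of the $g_{i}$ on $O_{\tau}$ amounts to $f_{\tau}$ having a torus critical point in $\bT^{n}$. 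The Cohen--Macaulayness of $S_\Delta$ via Hochster's theorem and the Ehrhart computation of $e(S_\Delta)$ are standard and should present no difficulty.
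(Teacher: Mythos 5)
Your argument is correct: Hochster's theorem gives Cohen--Macaulayness of the normal semigroup ring $S_\Delta$, the Ehrhart computation identifies the multiplicity with $n!\,\mathrm{vol}(\Delta)$ so that the Hilbert-series and h.s.o.p. arguments settle 1) $\Leftrightarrow$ 2), and the orbit-by-orbit analysis on $\mathrm{Proj}\,S_\Delta$ (valid because characters of $\bT^n$ surject onto the character group of any exponent sublattice, and vertices contribute nonvanishing monomials) correctly identifies emptiness of $V_+(J_{f,\Delta})$ with $\Delta$-regularity. The paper itself offers no proof of this statement --- it is quoted as Theorem 4.8 of Batyrev's paper --- and your proof follows essentially the same Cohen--Macaulay/Ehrhart/toric-stratification route as that source, so there is nothing further to add.
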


For a $\Delta-$regular polynomial $f$,  we shall further denote the space $\frac{ S_\Delta}{J_{f,\Delta}}$ by $R_f,$
\begin{equation}
 R_f= \frac{ S_\Delta}{J_{f,\Delta}}. 
\label{Rf}
\end{equation}
It is possible to  introduce a filtration on $S_\Delta,$
namely $u^i x^{ \alpha} \in S_k $ if and only if
$i \leq k$ and $ \frac{ \alpha}{k} \in \Delta.$
Consequently, we have an increasing filtration;
$$\bC \cong \{0\}=S_0 \subset S_1 \subset \cdots \subset S_n \subset \cdots,$$
that induces a decreasing filtration on  $R_f$ so that the decomposition $$R_f = \bigoplus_{i=0}^n R_f^i$$ holds where $R_f^i$
the $i-$the homogeneous part of $R_f$.

It is worthy to remark here that the filtration on $R_f$ ends up with the $n-$th
term.
\par
Let us recall the notion of Ehrhart polynomial:
\begin{dfn}
Let $\Delta$ be an $n-$dimensional convex polytope. Denote the
Poincar\'e series of graded algebra $S_\Delta$ by
$$P_\Delta({\mathsf t})= \sum_{k \geq0} \ell(k\Delta){\mathsf t}^k,$$
$$Q_\Delta({\mathsf t})= \sum_{k \geq0} \ell^\ast(k\Delta){\mathsf t}^k,$$
where $\ell(k\Delta)$ (resp.$\ell^\ast(k\Delta)$ )
represents the number of integer points
in $k\Delta.$ (resp. interior integer points in $k\Delta.$ )
Then
$$\Psi_\Delta({\mathsf t})= \sum_{k = 0}^n \psi_k(\Delta){\mathsf t}^k= (1-{\mathsf t})^{n+1}P_\Delta({\mathsf t}),$$
$$\Phi_\Delta({\mathsf t})= \sum_{k = 0}^n \varphi_k(\Delta){\mathsf t}^k = (1-{\mathsf t})^{n+1}Q_\Delta({\mathsf t}),$$
are called Ehrhart polynomials which satisfy
$${\mathsf t}^{n+1}\Psi_\Delta({\mathsf t}^{-1})= \Phi_\Delta({\mathsf t}).$$
\label{Ehrhart}
\end{dfn}
Let $\bT^n=(\bC\setminus \{0\})^n$ $=$ $Spec$ $\bC[x_1^{\pm}, \cdots, x_n^{\pm}]$ 
and $\bT^{n+1}=(\bC\setminus \{0\})^{n+1}$ $= Spec \bC[u^{\pm},x_1^{\pm},$ $ \cdots, x_n^{\pm}].$
Further, the main object of our study will be the cohomology group
of the complement to the hypersurface $Z_f:=\{x \in \bT^n ; f(x)=0\},$ i.e.
$\bT^n \setminus Z_f$ $\cong \{(u,x) \in \bT^{n+1} ; -uf(x)+1=0\}.$
The primitive part
$PH^{n}(\bT^n \setminus Z_f)$ is defined by the following exact sequence
\begin{equation}
0 \rightarrow H^n(\bT^{n+1}) \rightarrow H^n(
Z_{-uf+1}) {\rightarrow} PH^{n}( Z_{-uf+1})
\rightarrow 0.
\label{PHn}
\end{equation}
We consider its Hodge filtration 
$$ 0=F^{n+1}PH^{n}(\bT^n \setminus Z_f)  \subset \cdots \subset F^1PH^{n}(\bT^n \setminus Z_f)=
F^0 PH^{n}(\bT^n \setminus Z_f) = PH^{n}(\bT^n \setminus Z_f).$$
\begin{thm}(\cite[Theorem 6.9]{Baty1})
For the primitive part $PH^{n}(\bT^n \setminus Z_f)$ of $H^{n}(\bT^n \setminus Z_f),$
the following isomorphism holds
\begin{equation}
\frac{F^iPH^{n}(\bT^n \setminus Z_f)}{F^{i+1}PH^{n}(\bT^n \setminus Z_f)} \cong
R_f^{n+1-i}. \;\;\;  i \in [1;n+1 ]
\label{Rfni}
\end{equation}
Furthermore
$$ dim R_f^{n+1-i}=
\psi_{n+1-i}(\Delta),$$
for $ i \leq n.$
\label{thm12}
\end{thm}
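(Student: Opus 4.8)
The plan is to obtain the isomorphism \eqref{Rfni} as a consequence of Batyrev's general description of the Hodge--Deligne decomposition of the cohomology of a hypersurface in a torus, transported through the identification $\bT^n\setminus Z_f\cong Z_{-uf+1}$ used to define the primitive part in \eqref{PHn}. First I would recall from \cite{Baty1} the construction of the pole-order (or Koszul) filtration on the de Rham complex computing $H^n(\bT^n\setminus Z_f)$: writing $\omega=\frac{du}{u}\wedge\frac{dx_1}{x_1}\wedge\cdots\wedge\frac{dx_n}{x_n}$, every class in $PH^n$ is represented by a form $P(u,x)\,\omega$ with $P\in S_\Delta$, and the weight of $P$ in the filtration on $S_\Delta$ governs the order of pole along $Z_f$ after the substitution $u=1/f(x)$. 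The content of \cite[Theorem 6.9]{Baty1} is precisely that modding out by the Jacobian relations $\theta_uF,\theta_{x_i}F$ — i.e. passing to $R_f=S_\Delta/J_{f,\Delta}$ — gives the graded pieces of the Hodge filtration, with the $i$-th step of $F^\bullet$ matching the homogeneous component $R_f^{n+1-i}$; the shift by $n+1$ comes from the dimension of the ambient torus $\bT^{n+1}$ and the normalisation of the residue map. So the main step is simply to quote this theorem and check that the indexing conventions (the direction of the filtration on $S_\Delta$ versus the decreasing Hodge filtration, and the range $i\in[1;n+1]$) line up; the boundary cases $i=n+1$ (where $F^{n+1}PH^n=0$, matching $R_f^0=\{0\}$ since $S_0\cong\bC$ maps into $J_{f,\Delta}$) and $i=1$ (where $F^1=F^0=PH^n$) should be verified by hand.

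For the dimension count, I would combine the isomorphism just established with Proposition \ref{prop: laurent}: $\Delta$-regularity gives $\dim R_f=n!\,\mathrm{vol}(\Delta)$, and since $R_f=\bigoplus_{i=0}^n R_f^i$ is the associated graded of $S_\Delta$ under the filtration $S_0\subset S_1\subset\cdots$, the Poincar\'e series of $R_f$ is exactly $(1-\t)^{n+1}P_\Delta(\t)=\Psi_\Delta(\t)=\sum_{k=0}^n\psi_k(\Delta)\t^k$ — this is where the Ehrhart polynomial of Definition \ref{Ehrhart} enters. Reading off coefficients gives $\dim R_f^k=\psi_k(\Delta)$ for every $k\in[0;n]$, hence $\dim R_f^{n+1-i}=\psi_{n+1-i}(\Delta)$ for $i\le n$; the restriction $i\le n$ is needed precisely because the claim is only about the $n+1$ homogeneous pieces indexed $0,\dots,n$, while the Hodge filtration has steps $i=1,\dots,n+1$, and the top one ($i=n+1$, graded piece $R_f^0$) is zero rather than $\psi_0(\Delta)=1$ — the discrepancy is absorbed by the primitivity condition \eqref{PHn}, which removes the class of $\omega$ itself coming from $H^n(\bT^{n+1})$.

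The one genuine subtlety — and the step I expect to be the main obstacle to a self-contained argument — is the compatibility of the filtration on $S_\Delta$ (defined by $u^ix^\alpha\in S_k\iff i\le k,\ \alpha/k\in\Delta$) with the geometric pole-order/Hodge filtration under the non-algebraic substitution $u\mapsto 1/f(x)$, including the fact that the Jacobian ideal $J_{f,\Delta}$ is exactly the module of coboundaries in the relevant Koszul-type complex. This is the heart of \cite[Theorem 6.9]{Baty1} and I would not reprove it; instead I would carefully state the identification $\bT^n\setminus Z_f\simeq Z_{-uf+1}\subset\bT^{n+1}$, note that $F(u,x)=uf(x)-1$ is $\bar\Delta(f)$-regular whenever $f$ is $\Delta$-regular (so Batyrev's hypotheses apply), and invoke the theorem verbatim. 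Everything else — exactness of \eqref{PHn} as a sequence of mixed Hodge structures, strictness of morphisms of MHS so that $F^\bullet$ passes to graded pieces cleanly, and the purely combinatorial extraction of $\psi_k(\Delta)$ from $\Psi_\Delta(\t)$ — is formal and can be disposed of in a sentence each.
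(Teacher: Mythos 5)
Your plan is essentially the paper's own justification: Theorem \ref{thm12} is quoted from \cite[Theorem 6.9]{Baty1} without an independent proof, and your route --- check that $F(u,x)=uf(x)-1$ inherits regularity from the $\Delta$-regularity of $f$, invoke Batyrev's theorem for \eqref{Rfni}, and read off the dimensions from the fact that $uf,u\theta_{x_1}f,\dots,u\theta_{x_n}f$ form a regular sequence of degree-one elements, so that $\sum_k \dim R_f^k\,{\mathsf t}^k=(1-{\mathsf t})^{n+1}P_\Delta({\mathsf t})=\Psi_\Delta({\mathsf t})$ --- is exactly what is intended. One correction, however: your handling of the boundary case $i=n+1$ is wrong as stated. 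The ideal $J_{f,\Delta}$ is generated by the degree-one elements $uf,\,u\theta_{x_1}f,\dots,u\theta_{x_n}f$, so it contains nothing in degree zero; hence $R_f^0\cong\bC$, not $\{0\}$, and $S_0\cong\bC$ does \emph{not} map into $J_{f,\Delta}$ --- indeed this $R_f^0$ is precisely the summand $\bC$ in the decomposition $R_f\cong\bC\oplus R_f^+$ of Theorem \ref{thm13}. The correct reading is that \eqref{Rfni} carries content only for $i\in[1;n]$: since $\rho^+$ identifies $PH^{n}(\bT^n\setminus Z_f)$ with $R_f^+=\bigoplus_{i=1}^{n}R_f^i$, the piece $R_f^0$ corresponds to the torus class removed by the primitive quotient \eqref{PHn} and simply does not occur among the graded pieces of $PH^n$; this is also why the dimension formula is restricted to $i\le n$. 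So your intuition that primitivity absorbs the one-dimensional discrepancy is right, but it does so by excluding $R_f^0$ from the associated graded of $PH^n$, not by making $R_f^0$ vanish in $R_f$.
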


Denote by $I_\Delta^{(\ell)}$ $(\ell \in [0; n+1])$ the homogeneous ideal of $S_\Delta$
generated as a $\bC$ vector space by all monomials  $u^kx^{ \alpha}$ such 
that $\frac{ \alpha}{k}$ is located in $\Delta$ but not on any face 
 $\Delta' \subset \Delta$ with codimension $\ell.$

Thus we obtain the increasing chain of homogeneous ideals in $S_\Delta$
\begin{equation}
0=I_{\Delta}^{(0)}\subset I_{\Delta}^{(1)} \subset \cdots \subset I_{\Delta}^{(n)}  \subset I_{\Delta}^{(n+1)}=S_\Delta^+, 
\end{equation}
where $S_\Delta^{+}$ is the maximal homogeneous ideal in $S_\Delta.$

Denote by ${\mathcal R }$ the $\bC$ linear mapping
$$ {\mathcal R} : S_\Delta \rightarrow \Omega^n(\bT^n \setminus Z_f)$$
defined as
\begin{equation}
   {\mathcal R}(u^kx^{ \alpha}) = \frac{(-1)^k (k-1)! x^{ \alpha}}{f(x)^{k} } \frac{dx}{x^\b1} 
\label{Rukxa}
\end{equation}
with $dx =  \bigwedge_{i=1}^n dx_i$, $x^\b1 = \prod_{i=1}^n x_i.$ Further we shall also use the notation $\omega_0 =  \frac{dx}{x^\b1}.$ We recall that in \cite[\S 10 Modified Laplace transform]{Dwork} the correspondence \eqref{Rukxa} was substantial. 

We introduce a decreasing $\cal E$- filtration on $S_\Delta$
$$ {\cal E}: \cdots \supset {\cal E}^{-k} \supset \cdots \supset   {\cal E}^{-1} \supset {\cal E}^{0}$$
where ${\cal E}^{-k}$ denotes the subspace spanned by monomials $u^\ell x^\alpha \in S_\Delta$
with $\ell \leq k.$

\begin{thm} (\cite[Theorem 7.13, 8.2] {Baty1},  \cite[Theorem 7] {Stienstra},  \cite[Theorem 4.1, 4.2] {KM})

1)  There exists the following commutative diagram
\begin{displaymath}
    \xymatrix{
        S_\Delta \ar[r] \ar[d]_{\mathcal R} & \frac{S_\Delta }{{\mathcal D}_u S_\Delta + \sum_{i=1}^n{\mathcal D}_{x_i}  S_\Delta} \ar[d]^{\rho} \\
        \Omega^n(\bT^n \setminus Z_f) \ar[r]       &  H^{n}(\bT^n \setminus Z_f) }
\end{displaymath}
with ${\mathcal D}_u (g) =  e^{uf}\theta_u (e^{-uf}g)$,  ${\mathcal D}_{x_i} (g) =  e^{uf}\theta_{x_i} (e^{-uf}g)$ and $\rho$ an isomorphism.
In particular, we have the following isomorphism 
\begin{equation}
\rho^+: R_f^+ \rightarrow  PH^{n}(\bT^{n} \setminus Z_f), 
\label{rhoplus}
\end{equation}
for 
\begin{equation}
R_f^+=\frac{S_\Delta^{+}}{{\mathcal D}_u S_\Delta+ \sum_{i=1}^n{\mathcal D}_{x_i}  S_\Delta },
\label{RF+}  
\end{equation}
such that 
\begin{equation}
R_f \cong \bC \oplus R_f^+
\label{RfRF}.
\end{equation}

2)
The weight filtration on $H^{n}(\bT^{n} \setminus Z_f)$ is given by a increasing
filtration
\begin{equation}
0=W_{n}\subset W_{n+1} \subset \cdots \subset W_{2n} =H^{n}(\bT^{n} \setminus Z_f).  
\end{equation}
For $1 \leq i \leq n-1,$ the subspace $W_{n+i}$ equals $\rho({\mathcal I}^{(i)})$
where ${\mathcal I}^{(i)}$ is the image of the ideal $I^{(i)}_\Delta$ in the space 
(\ref{RF+} ).

3) 	The graded quotient of $R_f^+$ with respect to the  $\cal E$- filtration  is given by 
\begin{equation}
Gr^i_{\cal E} R_f^+= {\cal E}^{-i}( R_f^+ )/{\cal E}^{-i+1} (R_f^+) =   R_f^i .
\label{GrE}  
\end{equation}
for $ i \in [0; n].$ In particular $  R_f^+ = R_f^+ \cap {\cal E}^{-n}.$
\label{thm13}
\end{thm}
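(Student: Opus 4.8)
The plan is to establish the three assertions separately, each reducing to a combination of the toric de Rham theory of \cite{Baty1} and the Fourier--Mellin comparison with the oscillating-integral (twisted de Rham) complex; a complete argument is distributed over \cite[\S 7--8]{Baty1}, \cite{Stienstra} and \cite{KM}, and what follows is the architecture of that argument. For the \textbf{first assertion} I would compute $H^{\bullet}(\bT^n \setminus Z_f)$ by the algebraic de Rham complex of rational $n$-forms with poles on $Z_f$; the map ${\mathcal R}$ of (\ref{Rukxa}) exhibits $S_\Delta$ as a spanning set, the pole order being recorded by the $u$-degree, while on the twisted side ${\mathcal D}_u$ and ${\mathcal D}_{x_i}$ are the logarithmic derivations conjugated by $e^{uf}$, so modulo their images $S_\Delta$ collapses to the Jacobi-type quotient. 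The crucial input is $\Delta$-regularity: by Proposition \ref{prop: laurent} the elements $uf, u\theta_{x_1}f, \dots, u\theta_{x_n}f$ form a regular sequence in $S_\Delta$, so the associated Koszul complex is a resolution; a Griffiths-type argument on the pole-order filtration then identifies the cokernel of ${\mathcal R}$ modulo exact forms with $H^n(\bT^n \setminus Z_f)$ and forces the dimension $n!\,\mathrm{vol}(\Delta)$. Commutativity of the square is precisely the statement that reducing a form modulo $d$ of a lower-order form equals reducing the monomial modulo ${\mathcal D}_u S_\Delta + \sum_i {\mathcal D}_{x_i} S_\Delta$, and splitting off $H^n(\bT^{n+1}) \cong \bC$ via (\ref{PHn}) produces $\rho^+$ and the decomposition (\ref{RfRF}).

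For the \textbf{second assertion} I would pass to a smooth projective toric variety subordinate to $\Delta$ and take the closure $\bar Z_f$; the weight filtration on $H^n$ of the affine complement is Deligne's, computed from the logarithmic spectral sequence along the toric boundary divisors. The combinatorial content is that a monomial $u^k x^\alpha$ with $\alpha/k$ in the relative interior of a face of codimension $\geq \ell$ gives a form whose iterated residues along the corresponding boundary strata vanish to the required order, hence lies in $W_{n+\ell-1}$, and that the ideals $I_\Delta^{(\ell)}$ exhaust exactly these classes. Matching the two increasing chains $I_\Delta^{(0)} \subset \cdots \subset I_\Delta^{(n+1)}$ and $W_n \subset \cdots \subset W_{2n}$ term by term — with the exceptional coincidences $\rho({\mathcal I}^{(n+1)}) = W_{2n-2} = W_{2n-1}$, there being no weight-$(2n-1)$ part on a smooth affine variety, and $\rho({\mathcal I}^{(n+2)}) = H^n$ — is \cite[Theorem 8.2]{Baty1}.

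For the \textbf{third assertion} one observes that the ${\cal E}$-filtration is the pole-order filtration transported to $R_f^+$: ${\cal E}^{-k}$ is spanned by monomials of $u$-degree $\leq k$, so ${\mathcal R}({\cal E}^{-k})$ consists of forms with pole order $\leq k$. Under $\rho^+$ this matches the Hodge filtration $F^{n+1-k}PH^{n}(\bT^n\setminus Z_f)$, whose graded pieces are the homogeneous components $R_f^{n+1-i}$ by Theorem \ref{thm12}; re-indexing gives $Gr^i_{\cal E}R_f^+ = R_f^i$ for $i \in [0;n]$, and since the filtration on $R_f$ already terminates at the $n$-th step (the remark following (\ref{Rf})) one gets $R_f^+ = R_f^+ \cap {\cal E}^{-n}$.

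The main obstacle is the second assertion: it requires the full Deligne mixed Hodge machinery on the toric boundary together with the delicate identification of the combinatorial face-ideals $I_\Delta^{(\ell)}$ with weight-graded pieces, including the two degenerate cases at the top of the filtration. The first and third assertions are essentially formal once the regular-sequence (Koszul) input and Theorem \ref{thm12} are granted.
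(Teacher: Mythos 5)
You should note first that the paper itself offers no proof of Theorem \ref{thm13}: it is quoted verbatim from \cite[Theorems 7.13, 8.2]{Baty1}, \cite[Theorem 7]{Stienstra}, \cite[Theorems 4.1, 4.2]{KM}, so your proposal can only be measured against those sources. At the level of architecture it does follow them: part 1) via the twisted operators ${\mathcal D}_u,{\mathcal D}_{x_i}$ and the regular-sequence (Koszul) input of Proposition \ref{prop: laurent}; part 2) via Deligne's weight filtration and the face ideals $I^{(\ell)}_\Delta$; part 3) via the $u$-degree (pole order) filtration. Two caveats, though. Your text is an outline that defers every substantive verification to the references. And in part 3) the appeal to Theorem \ref{thm12} together with ``under $\rho^+$ the ${\cal E}$-filtration matches the Hodge filtration'' is mildly circular, since that matching is itself part of what \cite[Theorem 7.13]{Baty1} establishes; the direct argument is that the top $u$-degree parts of ${\mathcal D}_u g=\theta_u g-ufg$ and ${\mathcal D}_{x_i}g=\theta_{x_i}g-u(\theta_{x_i}f)g$ generate exactly the Jacobi ideal, so the ${\cal E}$-graded object of (\ref{RF+}) is the Jacobi-ring quotient, and the regular-sequence property of Proposition \ref{prop: laurent} rules out spurious relations in the graded pieces, giving $Gr^i_{\cal E}R_f^+=R_f^i$ directly.

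There is one genuinely wrong step: you justify $W_{2n-2}=W_{2n-1}$ by ``there being no weight-$(2n-1)$ part on a smooth affine variety,'' and this is false as a general principle --- for a smooth affine curve of positive genus ($n=1$) the weight-$1$ part of $H^1$ is the nonzero $H^1$ of the compactification. What is true, and what the cited proof actually uses, is specific to nondegenerate hypersurface complements in tori: by Danilov--Khovanskii \cite{DX1} the weight-graded pieces of $H^{n}(\bT^n\setminus Z_f)$ in weights strictly above $n+1$ are of Hodge--Tate type $(p,p)$, hence concentrated in even weights, and it is this vanishing of odd-weight pieces above $n+1$ (not smoothness or affineness alone) that produces the coincidence of the top weight subspaces; note that the piece in weight exactly $n+1$ is in general non-Tate and nonzero, so the bound matters. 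With that justification replaced by the Danilov--Khovanskii input, your proposal is a fair reconstruction of the proofs in the cited references, but not an independent argument and not something the paper itself supplies.
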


The exact sequence 
$$0 \rightarrow H^n(\bT^n) \rightarrow  H^n(Z_{-uf+1}) \rightarrow^{res} H^{n-1} (Z_f) \rightarrow 0 $$
gives rise to the isomorphisms 
$$ res (F^i PH^n (Z_{-uf+1})) = F^{i-1}PH^{n- 1}(Z_{f}),  \;\;  i \in [1; n+1] $$
and
\begin{equation} res (W_j PH^n (Z_{-uf+1})) = W_{j-2}PH^{n- 1}(Z_{f}), \;\;  j \in [n+1; 2n]
\label{WjPH}
\end{equation}
 (\cite[Proposition 5.3] {Baty1}).

{
\center{\section{Simpliciable polynomial} \label{simplicial}}
}
Let us consider a  Laurent polynomial satisfying conditions of Proposition \ref{prop: laurent}
\begin{equation}
F(x)= \sum _{ i \in [1; n+2]} a_{i}x^{\alpha(i)}. 
\label{Fpoly}
\end{equation}
Here $\alpha(i)$ denotes the multi-index
$$\alpha(i)=(\alpha^i_1, \cdots, \alpha^i_n ) \in \bM $$
for an integer lattice $\bM \cong {\bf Z}^{n}.$
Further we impose the following conditions on the polynomial $F(x)$

\vspace{0.5cm}
{\bf Assumption}
The point  ${\alpha(n+2)} \in \bM$ is located in the interior of the convex hull of $\{\alpha(i)\}_{i=1}^{n+1}$ that is an $n-$dimensional simplex.

\vspace{0.5cm}
This assumption means that the interior point fan $\Sigma$ defined by the Newton polyhedron $\Delta(F)$
is a complete simplicial fan. 
The defining equation of an  affine variety  $ Z_F$ defined in a torus $\bT^{n}$
is determined up to multiplication by a monomial $x^m,$ $m \in \bM.$ Therefore one can always assume one of terms participating in the expression (\ref{Fpoly}) to be a constant.
The  convention ${\alpha(n+1)} =0  \in \bM$  fixes the index of the constant term.

The main  object of our further study  is the polynomial 
$f(x) \in \bC[x^{\pm 1}][s]$ depending on a  parameter $s \in \bC,$
\begin{equation}
f(x)=\sum_{j=1}^{n} x^{ \alpha(j)}+1+s x^{ \alpha(n+2)}.  
\label{fpoly}
\end{equation}
Further we use the convention $  \alpha(n+1) =0.$
To recover the situation in \S \ref{Hodge}, we need to put
\begin{equation}
f_0(x)=x^{- \alpha(n+2)}f(x)-s.
\label{f0}
\end{equation}
Indeed  a polynomial $ u F(x) $ with non-zero coefficients
can be reduced to the form $u f(x)$ by the aid of a torus $\bT^{n+1}$ action on the variables $(x,u).$

\begin{remark}{\rm
A polynomial that depends on $n-$variables and contains
$n$ monomials is called of Delsarte type. Jean Delsarte
established a formula counting points over a finite field  on the hypersurface defined by
a polynomial of this class \cite{Delsarte}.  T.Shioda found an algorithm to calculate explicitly the Picard number of this kind of surface ($n=3$) \cite{Shioda}. Delsarte surface began to draw attention of geometers in connection with the mirror symmetry conjecture and detailed studies of its  N\'eron-Severi lattice.
 One can consider $Z_f$ defined for (\ref{fpoly}) as a one dimensional deformation of a Delsarte type hypersurface.}
\label{delsarte}
\end{remark}

Let us introduce new variables $T_1, \cdots, T_{n+2}$:
$$T_j = u  x^{\alpha (j)},\; j \in [1; n], $$ 
\begin{equation} T_{n+2}=usx^{\alpha (n+2)}, T_{n+1} =u.  
\label{Tn1}
\end{equation}

In making use of these notations , we have the relation
\begin{equation}
\begin{array}{c} 
 log\; T_j = log\;u + <\alpha(j), log\; x > ,\;  j \in [1;  n],\\ 
log \; T_{n+1}  = log\; u, \;\; log\; T_{n+2}= log\;u + <\alpha(n+2), log\; x >  +log\;s. \\
Log\; \Xi := ^t(\log\; x_1, \cdots, \log\; x_n,, \log\;s, \log\;u). \\
\end{array}
\label{Tnlog}
\end{equation}

We can rewrite the relation (\ref{Tnlog}) with the aid of a matrix
${\sf L} \in End({\bf Z}^{n+2}),$
as follows
\begin{equation} Log\; T= {\sf L}\cdot Log\; \Xi . 
\label{TNlogs}
\end{equation} 
where
\begin{equation}
{\sf L}= \left [\begin {array}{cccccccccc} {\it
\alpha_1^{1}}& \cdots&\alpha_n^{1} &0&1\\
\vdots&\cdots&\vdots&\vdots&1\\
\\\noalign{\medskip}\alpha_1^{n}&\cdots &\alpha_n^{n}&0&1\\
0&\cdots &0&0&1\\
\\\noalign{\medskip}\alpha_1^{n+2}&\cdots &\alpha_n^{n+2}&1&1\\

\end {array}\right ].
\label{L}
\end{equation}

 We denote the determinant of the matrix (\ref{L})
by 
\begin{equation}
\gamma = det(\sf L).
\label{gamma}
\end{equation}
We remark here that a map similar to (\ref{Tnlog}), (\ref{TNlogs}) has been 
introduced in the proof of the main theorem in \cite{Shioda} where a relation of Delsarte surfaces 
to  Fermat surfaces is established.

\begin{dfn}
We call  a polynomial $f(x)$ simpliciable if 
$det(\sf L)=\gamma \not =0.$
\label{dfn1}
\end{dfn}

A Laurent polynomial $f(x)$ is simpliciable if and only if its Newton polyhedron $\Delta(f)$ has the dimension of the ambient torus $\bT^n$ that is equal to $n.$  A polynomial $F(x)$ satisfying the above {\bf Assumption} is simpliciable. 
Further we shall assume that the determinant $\gamma$ of the matrix ${\sf L}$
is positive for a simpliciable $f(x)$ in such a way that $\gamma  = n! vol(\Delta(f))$. This assumption is always satisfied without loss of generality,
if we permute certain row vectors of the matrix, which evidently corresponds
to the change of names of vertices $\alpha(j).$ 

\begin{lem} Let $f(x)$ be a simpliciable polynomial.
For the simplex polyhedron
$\tau_q \in \bR^{n}$ defined as $\bigl<    \alpha (1), \lvup , \alpha (n+1),  \alpha (n+2)  \bigr>,$  $  q \in [1, n],$ we introduce the following positive integer 
\begin{equation}
 B_q = n !vol(\tau_q). 
\label{Bq}
\end{equation}
The relation below holds for \eqref{Bq},
\begin{equation}
 \sum_{q=1}^{n+1} B_q = |\bB| =\gamma = (-1)^{n+1} \chi(Z_f), 
\label{sumBq}
\end{equation}
where $\chi(Z_f)$ denotes the Euler-Poincar\'e characteristic of the affine hypersurface $Z_f.$ In the sequel, we shall use the notation
\begin{equation}
 \bB =( B_1, \cdots, B_{n+1}). 
\label{sumBq1}
\end{equation}
\label{Bvector}
\end{lem}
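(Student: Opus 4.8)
The plan is to reduce the statement to two separate facts: first, a purely combinatorial identity expressing $B_q$ as the normalized volume of the simplex $\tau_q$ obtained from the big simplex $\langle\alpha(1),\dots,\alpha(n+1)\rangle$ by replacing the vertex $\alpha(q)$ with the interior point $\alpha(n+2)$; and second, a topological identity identifying $n!\,\mathrm{vol}(\Delta(f))$ with $(-1)^{n+1}\chi(Z_f)$. The numbers $B_q$ themselves are introduced in the excerpt only implicitly (via equation \eqref{sumBq1} and the surrounding discussion of principal integral data), so the first step is really to unwind their definition in terms of the cofactor expansion of the matrix $\mathsf L$ in \eqref{L}: I expect $B_q$ to be (up to sign normalization) the determinant of the $n\times n$ minor of $\mathsf L$ obtained by deleting the row indexed by $q$ and an appropriate column, which is exactly $n!$ times the Euclidean volume of the simplex with vertices $\{\alpha(1),\dots,\widehat{\alpha(q)},\dots,\alpha(n+1),\alpha(n+2)\}$.

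First I would establish \eqref{Bq} by direct linear algebra. Expanding $\gamma=\det(\mathsf L)$ along the column that distinguishes $\alpha(n+2)$ (the penultimate column, which has a single nonzero entry $1$ in the last row), one gets $\gamma$ as a signed $n\times n$ determinant; then a further cofactor expansion of that determinant, grouped by which $\alpha(j)$ one "removes," produces a sum of $n+1$ terms each of which is the normalized volume of one of the $\tau_q$. The key bookkeeping point is that the simplex $\langle\alpha(1),\dots,\alpha(n+1)\rangle$ (with $\alpha(n+1)=0$) is subdivided by the interior point $\alpha(n+2)$ into exactly the $n+1$ cones over its facets, i.e. into the simplices $\tau_1,\dots,\tau_{n+1}$; this is precisely the statement that the interior point fan $\Sigma$ of $\Delta(f)$ is a complete simplicial fan (the content of the \textbf{Assumption}). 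Hence additivity of volume under this subdivision gives $\sum_{q=1}^{n+1} n!\,\mathrm{vol}(\tau_q) = n!\,\mathrm{vol}(\Delta(f))$, and identifying each summand with $B_q$ and the total with $\gamma$ via Definition \ref{dfn1} and the normalization $\gamma = n!\,\mathrm{vol}(\Delta(f))$ yields the middle equalities of \eqref{sumBq}.

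Next I would prove the last equality $\gamma = (-1)^{n+1}\chi(Z_f)$. For a $\Delta$-regular (hence nondegenerate) Laurent polynomial, Kouchnirenko's theorem gives the Euler characteristic of the affine hypersurface $Z_f\subset\bT^n$ as $\chi(Z_f) = (-1)^{n-1} n!\,\mathrm{vol}(\Delta(f))$ (the Newton number formula for hypersurfaces in a torus); since $(-1)^{n-1}=(-1)^{n+1}$ this is exactly $\chi(Z_f)=(-1)^{n+1}\gamma$. Alternatively, and more in the spirit of \S\ref{Hodge}, one can read this off from Theorem \ref{thm12}: $\dim PH^n(\bT^n\setminus Z_f)=\dim R_f^+ = \sum_{k}\psi_k(\Delta) - (\text{correction}) = n!\,\mathrm{vol}(\Delta)$ by Proposition \ref{prop: laurent}(2), combined with the exact sequences \eqref{PHn} and the residue sequence relating $H^n(\bT^n\setminus Z_f)$ to $H^{n-1}(Z_f)$, so that the only surviving primitive cohomology is concentrated in degree $n-1$ on $Z_f$ with the stated dimension; weighting by $(-1)^{n-1}$ gives $\chi$.

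\textbf{Main obstacle.} The computational heart is the second cofactor expansion in the first step: one must check that the signs produced by expanding $\det(\mathsf L)$ match the signs in the volume-additivity relation $\sum \mathrm{vol}(\tau_q) = \mathrm{vol}(\Delta(f))$, which in turn relies on all the $\tau_q$ being positively oriented — this is where the remark in the excerpt that "we may permute row vectors so that $\gamma>0$" is used, together with the \textbf{Assumption} guaranteeing $\alpha(n+2)$ is genuinely interior (so no $\tau_q$ degenerates and no orientation flips). Keeping track of these orientations simultaneously for all $n+1$ simplices, rather than one at a time, is the step most likely to require care; everything else is either standard (Kouchnirenko) or already available from Theorem \ref{thm12} and Proposition \ref{prop: laurent}.
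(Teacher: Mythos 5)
Your proposal is correct and takes essentially the same route as the paper: the paper's (two-sentence) proof likewise obtains each $B_q$ as a cofactor/minor of $\mathsf L$ (via Cramer's rule, hence $B_q=n!\,\mathrm{vol}(\tau_q)$, the relevant deletion being row $q$ and the $s$-column) and cites exactly the Khovanskii/Oka (Kouchnirenko-type) formula $\chi(Z_f)=(-1)^{n-1}n!\,\mathrm{vol}(\Delta(f))$ for the last equality. Your additivity-of-volume argument for $\sum_q B_q=\gamma$ via the star subdivision of $\Delta(f)$ at the interior point $\alpha(n+2)$ only spells out what the paper leaves implicit (and is sound, even though the intermediate ``second cofactor expansion'' phrasing is not literally how the $\tau_q$ appear, since the minor obtained after expanding along the $s$-column no longer involves $\alpha(n+2)$).
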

\begin{proof}
The derivation of positive integers $B_1, \cdots, B_{n+1}$ is based on the calculation of $n+1 $ minors of  the matrix $\sf L$ obtained in  removing the $(n+2)-$nd column. 
To establish the last equality, we recall Theorem 2 of \cite{X} or Theorem 1 of \cite{Oka}
on the Euler characteristic and the volume of Newton polyhedron. 

\end{proof}

\begin{remark}
If a polynomial $f(x)$ is simpliciable, it has $n$ tuple of  linearly independent vectors from
$ supp(f).$ Such a polynomial with generic coefficients is $\Delta(f)$-regular.

\label{remark:nonsimpliciable} 
\end{remark}

{
\center{\section{Mellin transforms}\label{mellin}}
}
In this section, we proceed to the calculation of  the Mellin transform of
period integrals associated to the hypersurface
$Z_{f} =\{ x \in \bT^{n} ; f(x)=0\}$
defined by a simpliciable
polynomial  $f$   (\ref{fpoly}). 

First of all, we consider the period integral taken along the fibre
for $u^k x^{\bI} \in R_f^+$
$\rho^+ (u^k x^{\bI}) \in PH^n(\bT^n \setminus Z_{f})$  (see Theorem  \ref{thm13} ) as follows
\begin{equation} I_{u^k x^{\bI}, \t \delta} (s): =
\int_{\t \delta(s)}\frac{  (k-1) ! x^{\bI} \omega_0 }{f(x)^k}      
\label{IukxI}
\end{equation}
where ${\t \delta(s)}$
$\in H_{n}(\bT^{n} \setminus Z_{f})$ is a cycle
obtained after the application of  $\t:$ Leray's coboundary (or tube)  operator to a $n-1$ cycle
$ \delta(s)$
$\in H_{n-1}(Z_{f}).$ 
Leray's coboundary operator can be defined as a $S^1$ bundle construction over the cycle $\delta(s)$ (\cite{FFLP}, Part II).

The Mellin transform of
$I_{u^k x^{\bI}, \t \delta} (s)$ is defined by the following integral:
\begin{equation}M_{u^k x^{\bI}, \delta} (z):=  \int_{\Pi} s^z I_{u^kx^{\bI}, \t \delta } (s) \frac{ds}{s}. 
\label{MukxI}
\end{equation}
Here $\Pi$ stands for a semi-real axis of the form $\Pi= \{s \in \bT; Arg\; s = \alpha\}$
for some fixed $\alpha \in [0, 2 \pi)$
that avoids ramification loci of $I_{u^k x^{\bI}, \t \delta} (s).$

Here we recall the fact
$$ \int_{\bR_+} u^k e^{-u f(x)} \frac{du}{u}  = \frac{(k-1)!}{   f(x)^k}$$
for $ \Re(f(x))>0,$ $k \geq 1$. On the Leray coboundary $\t \delta(s) \subset \{x \in \bT^n; \mid f(x)\mid = \epsilon, \epsilon > 0\}$ 
the argument $Arg\; (f(x))$ moves on the circle $S^1$. Thus we introduce a fibre product  along $S^1$
$$ \bT \times_{S^1} \t  \delta(s) : = \bigcup_{\theta \in S^1} \{ (u,x)  \in (e^{-i \theta} \bR_+, \t \delta(s));   f(x) = \epsilon e^{i \theta}\}$$
in order to define the integral 
$$
\int_{ \bT \times_{S^1} \t  \delta (s)} e^{-u f(x) } x^{\bI}  du \wedge \omega_0 $$
properly as a function in $s \in \bT$. In fact the integrand function has neither branching points nor poles on the $\bC_u$ plane and, in general, the turn of the integration path 
$e^{-i \theta} \bR_+$ gives a natural analytic continuation beween integrals
$\int_{\bR_+} e^{-uT} du$ for $T>0 $ and $\int_{ e^{-i \theta} \bR_+} e^{-uT} du$ for 
$Arg\; T \in ( -\pi/2+\theta, \pi/2+ \theta).$
Now we consider the following $(n+2)-$ dimensional chain 
\begin{equation}
 \tilde \Gamma  : = (\bT \times_{S^1} \t  \delta (s)) \times_{\Pi} \Pi = \{(u,x,s); (u,x) \in \bT \times_{S^1} \t \delta (s), s \in \Pi \}.\label{tildeGamma}
\end{equation}
In fact,  this gives an equivariant fibration over $ {S^1} \times \Pi.$ The movement of   $(\theta, s)$ inside ${S^1} \times \Pi$ provokes no monodromy of the fibre. The above procedure can be considered as a justification of a formal argument depicted in \cite[\S 10]{Dwork}.

We deform the integral (\ref{MukxI}),  in making use of the relation
(\ref{TNlogs})
\begin{equation}
M_{u^k x^{\bI},  \tilde \Gamma} (z)=
\int_{ \tilde \Gamma} e^{-u f(x) } x^{\bI} u^{k}s^{z} \frac{du}{u} \wedge \omega_0 
\wedge \frac{ds}{s}
\label{ MukxIz}
\end{equation}
$$= \frac{1}{\gamma}\int_{{\sf L}_\ast
(\tilde \Gamma)} e^{-\Psi(T)}
\prod_{q=1}^{n+2} T_q^{{\mathcal L}_q(\bI,z,k)} \prod_{q=1}^{n+2} \bigwedge
\frac{dT_q}{T_q},$$ with
\begin{equation}
\Psi(T) = T_1(x,u) + \cdots +T_{n+1}(u)+ T_{n+2}(x,s,u)= u f(x), 
\label{Psi} \end{equation}
where each term $T_i(x,u), $ $ i \in [1; n],$ represents a
monomial term (\ref{Tn1}) of variables $(x,u)$  of the polynomial (\ref{Psi}) while
$T_{n+1}(u)=u.$
 Here the exponents ${\mathcal L}_q(\bI,z,k)$ denote  linear functions of components that shall be concretely given in 
 (\ref{Lq} ).

In the following proposition we denote by ${\mathcal L}_q(\bI,z,k)$ the inner product of $(\bI,z,k)$
with the $q-$th column vector of 
${\sf L}^{-1}$ for  ${\sf L}$ in \eqref{L}. In \cite{Matsubara-curvilinear}, a parallel calculation is achieved for a class of Euler-Laplace integrals that are more general than \eqref{IukxI}.
\begin{proposition}
1) The Mellin transform
$M_{u^k x^{\bI},  \tilde \Gamma}$ of the period integral associated to the
simpliciable polynomial $f(x)$ has the following form.
\begin{equation}
M_{u^k x^{\bI},  \tilde \Gamma}= g_{\tilde \Gamma}(z)  \prod_{q=1}^{n+2} \Gamma({\mathcal L}_q(\bI,z,k)), 
\label{MukxIg}
\end{equation}
 where $g_{\tilde \Gamma}(z)$ is a polynomial in $e^{\frac{2 \pi i z} {\gamma}}$ with
$\gamma = n!vol(\Delta(f)).$ The  function ${\mathcal L}_q(\bI,z,k), q \in [1; n+2]$
linear in $(\bI,z,k)$ with coefficients in $\frac{1}{\gamma} \bZ$ is given by 
\begin{equation}
{\mathcal L}_q(\bI,z,k) = ^t (\bI, z, k)  w_q=\frac{< v_q,\bI> - B_q z+
C_q k}{\gamma}, 
\label{Lq}
\end{equation} 
where $ w_q$
is the $q-$th column vector of the matrix $({\sf L})^{-1}$ for  ${\sf L}$ in \eqref{L}.
\par
2) The $n+2$ linear functions ${\mathcal L}_q(\bI,z,k)$
are classified into the following  three groups.
For $q=n+2,$ we set
\begin{equation}
{\mathcal L}_{n+2}(\bI,z,k) = \frac{\gamma}{\gamma}z = z.
\label{Ln+2}
\end{equation}

There exists an unique index $q $ such that $ w_q=  ( v_q, - B_q, \gamma)/ \gamma$
for some $ v_q \in \bZ^{n}$ and $B_q  >0. $ We fix such $q$ to be $n+1.$
\begin{equation}
{\mathcal L}_{n+1}(\bI,z,k) =
\frac{< v_{n+1},\bI> - B_{n+1} z }{\gamma} +k .
\label{Ln+1}
\end{equation}

For $q \in [1;n] $ such that $ w_q=  ( v_q, - B_q, 0)/ \gamma$
for some $ v_q \in \bZ^{n},$ and $B_q > 0, $
\begin{equation}
{\mathcal L}_q(\bI,z,k) =\frac{< v_q,\bI> - B_q z }{\gamma}.
\label{LqIz}
\end{equation}
For these vectors, we have the following equalities
\begin{equation}
\sum_{q=1}^{n+1} w_q =(0,-1,1),  \;\;\; \sum_{q=1}^{n+1} v_q =0.
\label{Wqvq}
\end{equation}
\par
\label{prop31}
\end{proposition}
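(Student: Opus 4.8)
The computation of (\ref{ MukxIz}) reduces the Mellin transform to a Gamma-type integral over the chain ${\sf L}_\ast(\tilde\Gamma)$, so the plan is to evaluate that integral by reducing it to a product of one-dimensional integrals, one for each new coordinate $T_q$. First I would verify that the change of variables $(u,x,s)\mapsto T$ given by (\ref{TNlogs}) is a covering of degree $|\gamma|$ of the torus $\bT^{n+2}$, which produces the factor $1/\gamma$ and a multivalued factor recording the ambiguity in the branch of $s^z=\prod T_q^{\mathcal L_q}$; this ambiguity is precisely a polynomial in $e^{2\pi i z/\gamma}$, giving $g_{\tilde\Gamma}(z)$. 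The key point is that, under ${\sf L}_\ast$, the image chain decomposes as a product $\prod_q \Pi_q$ of Hankel-type paths (each a rotated half-line in the respective $\bC_{T_q}$-plane, by the discussion of the fibre product along $S^1$ preceding (\ref{tildeGamma})), while $\Psi(T)=\sum_q T_q$ (a consequence of (\ref{Psi}) and (\ref{Tn1})) separates the exponential into $\prod_q e^{-T_q}$. Hence the integral factors as $\prod_{q=1}^{n+2}\int_{\Pi_q} e^{-T_q} T_q^{\mathcal L_q(\bI,z,k)}\,\frac{dT_q}{T_q}=\prod_{q=1}^{n+2}\Gamma(\mathcal L_q(\bI,z,k))$, using the standard Hankel representation of $\Gamma$; this establishes part 1), formula (\ref{MukxIg}), once the exponents $\mathcal L_q$ are identified as the entries of ${}^t(\bI,z,k){\sf L}^{-1}$ row-by-row, which is immediate from $Log\,T={\sf L}\cdot Log\,\Xi$ and $x^{\bI}u^k s^z=\prod_q T_q^{\mathcal L_q}$.

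For part 2) I would compute ${\sf L}^{-1}$ from the explicit matrix (\ref{L}). The last column of ${\sf L}$ is $(1,\dots,1)^t$ and the penultimate column is $e_{n+2}$ (only the $(n+2,n+1)$ entry is $1$), so Cramer's rule shows that the bottom row of ${\sf L}^{-1}$ — which gives the coefficient of $k$ in each $\mathcal L_q$ — has entries $C_q/\gamma$ where $C_q$ is the cofactor of the last column; similarly the coefficient of $z$ is $-B_q/\gamma$ where $B_q$ is (up to sign) the minor obtained by deleting the $(n+2)$-nd column, matching Lemma \ref{Bvector}. Expanding these cofactors: deleting column $n+2$ and row $q$, for $q\le n$ the resulting minor is $\pm B_q$ with the sign arrangement making $B_q>0$ (this is exactly $n!\,vol(\tau_q)$ of Lemma \ref{Bvector}), and because the $(n+1)$-st row of ${\sf L}$ is $(0,\dots,0,0,1)$, only the cofactor indexed by $q=n+1$ sees the $s$-column nontrivially — this singles out the unique index (relabelled $n+1$) with $C_q=\gamma$, giving (\ref{Ln+1}); for $q=n+2$ the cofactor computation collapses to $\gamma$ in the $z$-slot and $0$ elsewhere, yielding (\ref{Ln+2}). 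The $v_q\in\bZ^n$ are then read off as the remaining cofactor vectors.

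Finally, the identities (\ref{Wqvq}) are obtained by summing columns of ${\sf L}^{-1}$: since ${\sf L}\cdot{}^t(0,\dots,0,*,*)$ with the last two Cartesian basis directions reproduces the two special columns of ${\sf L}$, one has $\sum_{q=1}^{n+2}w_q={\sf L}^{-1}\cdot(\text{row sums of }{\sf L})$, and subtracting the already-known $w_{n+2}=(0,\dots,0,1,0,\dots)/1$ contribution (the entry corresponding to $z$) isolates $\sum_{q=1}^{n+1}w_q=(0,-1,1)$; the $\bZ^n$-block of this gives $\sum v_q=0$. Concretely: $\sum_{q=1}^{n+1}$ of the $z$-coefficients is $-\sum B_q/\gamma=-1$ by (\ref{sumBq}), $\sum_{q=1}^{n+1}$ of the $k$-coefficients is $C_{n+1}/\gamma=1$, and the $\bI$-part vanishes because the corresponding row sums of ${\sf L}$ (the $\alpha$-columns) are annihilated after accounting for the constant row $\alpha(n+1)=0$.

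I expect the main obstacle to be bookkeeping the signs and the precise branch structure: showing rigorously that ${\sf L}_\ast(\tilde\Gamma)$ splits as a genuine product of Hankel contours (rather than merely mapping onto such a product up to a $\gamma$-fold multiplicity) and that the resulting multivalued prefactor is exactly a polynomial in $e^{2\pi i z/\gamma}$ with no extra contributions — this is where the equivariant fibration remark after (\ref{tildeGamma}) and the absence of monodromy in $(\theta,s)$ must be used carefully. The algebraic identification of the cofactors of (\ref{L}) with the $B_q$ of Lemma \ref{Bvector} and the normalization $\gamma=n!\,vol(\Delta(f))>0$ is routine but needs the sign convention fixed there.
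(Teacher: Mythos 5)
Your proposal is correct and follows essentially the same route as the paper: push the Mellin integral forward by ${\sf L}$, factor it into rotated Hankel-contour Gamma integrals with the covering/branch ambiguity collected into the prefactor $g_{\tilde \Gamma}(z)$, and read off the coefficients of the ${\mathcal L}_q$ by Cramer's rule combined with Lemma \ref{Bvector}, the relations (\ref{Wqvq}) coming from the two special columns of ${\sf L}$. Only two harmless indexing slips should be fixed: the $z$-coefficients $-B_q/\gamma$ are cofactors along the $(n+1)$-st (the $s$-) column of ${\sf L}$, not the $(n+2)$-nd, and $\sum_{q=1}^{n+2} w_q = {\sf L}^{-1}(1,\dots,1)^t = e_{n+2}$ because the all-ones vector is the last column of ${\sf L}$, rather than ``${\sf L}^{-1}$ times the row sums of ${\sf L}$''.
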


\begin{proof}
1) The definition of the $\Gamma-$ function can be formulated as follows
$$ \int_{\bar {\bR}_+} e^{-T}T^\sigma \frac{dT}{T} = (-1+ e^{2 \pi i
\sigma})\int_{\bR_+} e^{-T}T^\sigma  \frac{dT}{T} = (-1+ e^{2 \pi
i \sigma}) \Gamma(\sigma),$$ for the unique nontrivial  cycle
$\bar {\bR}_+$  turning once in an anticlockwise manner around $T=0$ that begins and returns to
$\Re T \rightarrow + \infty.$  In the theory of Bessel functions, 
$\bar {\bR}_+$ is called Hankel cycle. See  \cite{Matsubara-curvilinear} for the use of higher dimensional Hankel cycles in the theory of A-GKZ hypergeometric functions. 

 Let us  consider an action on the chain $C_a=\bar {\bR}_+$
or ${\bR}_+ $ on the complex $T_a-$plane 
$\lambda:C_a \rightarrow \lambda(C_a)$, $a \in [1;n+2],$ 
defined by the relation
$$ \int_{\lambda(C_a)} e^{-T_a}T_a^{\sigma_a} \frac{dT_a}{T_a}
= \int_{(C_a)} e^{-T_a}(e^{2\pi  \sqrt -1 }T_a)^{\sigma_a} 
\frac{dT_a}{T_a}.$$
In particular $\lambda( {\bR}_+) =  \bR_+ +\bar {\bR}_+.$
In terms of this action, the chain ${\sf L}_\ast (
\tilde \Gamma) $ turns out to be homologous to a linear combination with integer coefficients of chains
\begin{equation}
\prod_{q=1 }^{n+1}\lambda^{j_{q}}(\bar {\bR}_+ )  \lambda^{j_{n+2}}(\bR_+) \;\;or \;\;
\prod_{q=1 }^{n+2}\lambda^{j_{q}}(\bar {\bR}_+ ) ,
\label{chains}
\end{equation} 
with $j_q \in \bZ.$  The only singular points of the integrand is at $T_a =0 \;{\rm or}\; \infty, a \in [1;n+2].$ Thus the non trivial integral \eqref{ MukxIz} shall be a   linear combination of integrals along chains \eqref{chains}. 
This  explains the presence of the factor $g_{\tilde \Gamma}(z)$ in 
(\ref{MukxIg}) that is a polynomial in  the exponential functions
$ e^{2 \pi \sqrt -1 j_q{\mathcal L}_q({\bI, \bz, k})},$  $q \in [1; n+2].$ 
We apply the action $\lambda$ to cycles defining the integral (\ref{ MukxIz})
and get $(\ref{MukxIg}).$

Cramer's formula explains the origin of the coefficient $B_q$  in (\ref{Lq}) that is an $n \times n$ minor of 
$\sf L$ \eqref{L}.

The point 2) is reduced to the linear algebra based on Lemma \ref{Bvector}. 
\end{proof}
\par

\begin{cor}

 The Newton polyhedron admits the following representation by the aid of linear functions defined in 
(\ref{Ln+1}), (\ref{LqIz})
\begin{equation}
\Delta (f) = \{\beta \in \bR^n; 0 \leq {\mathcal L}_q(\beta ,0,1) \leq 1 \}   
\label{Deltafb}
\end{equation}
for $q \in [1 ; n+1].  $
\label{cor32}
\end{cor}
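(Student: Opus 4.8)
The plan is to recognise the $n+1$ affine-linear forms $\mathcal{L}_q(\cdot,0,1)$, $q\in[1;n+1]$, as the barycentric coordinate functions of the simplex $\Delta(f)$, after which Corollary \ref{cor32} becomes the elementary facet description of a simplex. To set up, I would first note that the {\bf Assumption} puts $\alpha(n+2)$ in the interior of $\mathrm{conv}\{\alpha(1),\dots,\alpha(n+1)\}$, so that
\[
\Delta(f)=\mathrm{conv}\{\alpha(1),\dots,\alpha(n),\alpha(n+1)\}
\]
(with $\alpha(n+1)=0$) is an $n$-dimensional simplex whose vertices $\alpha(1),\dots,\alpha(n+1)$ are affinely independent; equivalently $\gamma=\det {\sf L}\neq0$, which is the definition of $f$ being simpliciable. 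Hence $\{\alpha(1),\dots,\alpha(n+1)\}$ is an affine basis of $\bR^n$, and an affine function on $\bR^n$ is determined by its values there.

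Next I would evaluate $\mathcal{L}_q(\alpha(i),0,1)$ for $i,q\in[1;n+1]$, using that the $q$-th column $w_q$ of $({\sf L})^{-1}$ solves ${\sf L}\,w_q=e_q$. The $(n+1)$-st row of ${\sf L}$ in (\ref{L}) is $(0,\dots,0,0,1)$, so it forces the last component of $w_q$, namely $C_q/\gamma$, to equal $\delta_{n+1,q}$; thus $C_q=0$ for $q\le n$ and $C_{n+1}=\gamma$. For $q\le n$, rows $1,\dots,n$ of ${\sf L}\,w_q=e_q$ then give $\langle\alpha(j),v_q\rangle=\gamma\,\delta_{jq}$ for $j\in[1;n]$, while $\langle\alpha(n+1),v_q\rangle=0$ since $\alpha(n+1)=0$; by (\ref{LqIz}) this yields $\mathcal{L}_q(\alpha(i),0,1)=\langle v_q,\alpha(i)\rangle/\gamma=\delta_{iq}$. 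For $q=n+1$, the same rows give $\langle\alpha(j),v_{n+1}\rangle=-\gamma$ for $j\in[1;n]$ and $\langle\alpha(n+1),v_{n+1}\rangle=0$; by (\ref{Ln+1}), $\mathcal{L}_{n+1}(\alpha(i),0,1)=\langle v_{n+1},\alpha(i)\rangle/\gamma+1=\delta_{i,n+1}$, the inhomogeneous term $+k$ (here $k=1$) exactly compensating the vanishing at $\alpha(n+1)$. So $\mathcal{L}_q(\alpha(i),0,1)=\delta_{iq}$ for all $i,q\in[1;n+1]$; and since (\ref{Wqvq}) gives $\sum_{q=1}^{n+1}v_q=0$ while $C_{n+1}=\gamma$ is the only nonzero $C_q$, the forms satisfy $\sum_{q=1}^{n+1}\mathcal{L}_q(\beta,0,1)\equiv1$ identically on $\bR^n$.

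Finally I would conclude: because $\mathcal{L}_q(\cdot,0,1)$ takes the value $\delta_{iq}$ on the affine basis $\{\alpha(1),\dots,\alpha(n+1)\}$, it is the $q$-th barycentric coordinate relative to $\Delta(f)$, i.e. if $\beta=\sum_{i=1}^{n+1}\lambda_i\alpha(i)$ with $\sum_i\lambda_i=1$ then $\mathcal{L}_q(\beta,0,1)=\lambda_q$. Therefore $\beta\in\Delta(f)$ if and only if all $\lambda_q\ge0$, i.e. $\mathcal{L}_q(\beta,0,1)\ge0$ for $q\in[1;n+1]$; and whenever all these are nonnegative the identity $\sum_q\mathcal{L}_q(\beta,0,1)=1$ forces each $\mathcal{L}_q(\beta,0,1)\le1$, so the two-sided inequalities in (\ref{Deltafb}) cut out precisely $\Delta(f)$. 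The only point needing care is the bookkeeping of the entries of $({\sf L})^{-1}$ via ${\sf L}\,w_q=e_q$ — in particular the role of the single inhomogeneous term $+k$ in (\ref{Ln+1}), stemming from $C_{n+1}=\gamma\neq0$ — together with the (routine, but essential) observation that $\alpha(1),\dots,\alpha(n+1)$ span $\bR^n$ affinely, which is guaranteed by $f$ being simpliciable.
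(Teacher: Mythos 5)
Your proof is correct and takes essentially the same route as the paper: both identify $\mathcal{L}_q(\cdot,0,1)$, $q\in[1;n+1]$, as the barycentric (facet) coordinates of the simplex $\Delta(f)$ by reading off from the columns of $({\sf L})^{-1}$ that each such affine form vanishes on the facet opposite $\alpha(q)$ and equals $1$ at $\alpha(q)$. Your write-up merely makes explicit the vertex evaluations $\mathcal{L}_q(\alpha(i),0,1)=\delta_{iq}$ via ${\sf L}\,w_q=e_q$ and the identity $\sum_{q=1}^{n+1}\mathcal{L}_q(\beta,0,1)=1$, which the paper states more tersely.
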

\begin{proof}

After the definition of vectors $ v_{1}, \cdots,
  v_{n+1},$ we can argue as follows. 

For a vector    $\vec i$ on the hyperplane
$\bigl< 0,    \alpha(1) , \lvup , \alpha(n) \bigr>,$ $q \in [1 ; n],$
the scalar product $\bigl< v_{q}, \vec i \bigr> $ vanishes while
  $\bigl< v_{q},  \alpha (q) \bigr>= \gamma. $

For a vector    $\vec i$  from the hyperplane
$\bigl<   \alpha(1) , \cdots , \alpha(n) \bigr>$ not passing through the origin, we have
scalar products $\bigl< v_{n+1}, \vec i \bigr>  =  \bigl< v_{n+1}, \alpha (q) \bigr>  = -\gamma$, $q  \in [1 ; n].$
\end{proof}

\begin{cor}
A monomial $u^\ell x^{\bI} \in \bC[u, x^{\pm}]$ belongs to $S_\Delta$ if and only if the following $n-$tuple of inequalities are satisfied,
$$  0 \leq {\mathcal L}_q(\bI ,0,\ell) \leq 1 $$
for $q  \in [1 ; n].$
\label{SDL}
\end{cor}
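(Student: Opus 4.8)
The plan is to reduce the claimed characterization of membership in $S_\Delta$ to Corollary \ref{cor32}, which already describes $\Delta(f)$ in terms of the linear forms $\mathcal{L}_q$. Recall from the filtration on $S_\Delta$ introduced after \eqref{Rf} that a monomial $u^\ell x^{\bI}$ lies in $S_\Delta$ precisely when $\ell \geq 0$ and the rescaled exponent $\bI/\ell$ lies in $\Delta(f)$ (and the case $\ell = 0$ forces the monomial to be a constant, consistent with $S_0 \cong \bC$). So the first step is to apply Corollary \ref{cor32} to the point $\beta = \bI/\ell \in \bR^n$: this gives $u^\ell x^{\bI} \in S_\Delta$ iff $0 \leq \mathcal{L}_q(\bI/\ell, 0, 1) \leq 1$ for all $q \in [1; n+1]$.

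The second step is to exploit the linearity of $\mathcal{L}_q$ in its arguments, as recorded in \eqref{Lq}. Since $\mathcal{L}_q(\bI, 0, \ell) = \tfrac{1}{\gamma}(\langle v_q, \bI\rangle + C_q \ell)$ is homogeneous of degree one jointly in $(\bI, \ell)$, we have $\mathcal{L}_q(\bI/\ell, 0, 1) = \tfrac{1}{\ell}\mathcal{L}_q(\bI, 0, \ell)$ for $\ell > 0$, and multiplying through by $\ell > 0$ converts the inequalities $0 \leq \mathcal{L}_q(\bI/\ell, 0, 1) \leq 1$ into $0 \leq \mathcal{L}_q(\bI, 0, \ell) \leq 1$ for $q \in [1; n+1]$. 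It remains only to observe that the range $[1; n+1]$ in Corollary \ref{cor32} can be trimmed to $[1; n]$: by \eqref{Ln+1} the form $\mathcal{L}_{n+1}$ differs from $\mathcal{L}_q$ ($q \le n$) in that its $C_q$-coefficient equals $\gamma$ rather than $0$, so $\mathcal{L}_{n+1}(\bI, 0, \ell) = \tfrac{1}{\gamma}\langle v_{n+1}, \bI\rangle + \ell$; using the summation identities \eqref{Wqvq}, namely $\sum_{q=1}^{n+1} v_q = 0$ and the corresponding relation among the $w_q$, one sees $\mathcal{L}_{n+1}(\bI, 0, \ell) = \ell - \sum_{q=1}^{n}\mathcal{L}_q(\bI, 0, \ell)$. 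Thus once $0 \leq \mathcal{L}_q(\bI, 0, \ell) \leq 1$ holds for $q \in [1; n]$, whether the remaining constraint $0 \leq \mathcal{L}_{n+1}(\bI, 0, \ell) \leq 1$ is automatically implied depends on a compatibility that I should track carefully; but since $u^\ell x^{\bI} \in \bC[u, x^{\pm}]$ already forces integrality and the Assumption pins down $\alpha(n+2)$ in the interior of the simplex $\langle \alpha(1), \dots, \alpha(n+1)\rangle$, the face structure of $\Delta(f)$ makes the $q = n+1$ inequality redundant given the other $n$.

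The main obstacle I anticipate is precisely the bookkeeping in this last reduction: confirming that among the $n+1$ defining inequalities of the simplex $\Delta(f)$ exactly one is redundant once a monomial of $\bC[u, x^{\pm}]$ is assumed, and correctly identifying it as the $q = n+1$ one under the normalization fixed after Definition \ref{dfn1} (the convention $\alpha(n+1) = 0$ and $\gamma > 0$). Everything else is a direct translation through Corollary \ref{cor32} and the homogeneity of the $\mathcal{L}_q$.
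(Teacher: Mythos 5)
Your plan---translate $u^\ell x^{\bI}\in S_\Delta$ into $\bI/\ell\in\Delta(f)$ via \eqref{Sdelta} and then apply Corollary \ref{cor32} together with the linearity of the forms $\mathcal{L}_q$---is exactly the route the paper intends (the corollary is stated there without a separate proof), but two of your steps do not go through as written. First, the rescaling is done incorrectly: from $0\le\mathcal{L}_q(\bI/\ell,0,1)\le1$ and $\mathcal{L}_q(\bI/\ell,0,1)=\tfrac1\ell\mathcal{L}_q(\bI,0,\ell)$ you obtain $0\le\mathcal{L}_q(\bI,0,\ell)\le\ell$, not $\le 1$; since for $q\in[1;n]$ the form \eqref{LqIz} does not involve the last argument at all, the two bounds genuinely differ once $\ell\ge2$. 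In Example \ref{example345} the monomial $u^2x_1^6x_2^6$ lies in $S_\Delta$ (its rescaled exponent is the vertex $(3,3)$), yet $\mathcal{L}_1((6,6),0,2)=2>1$. Second, the inequality you drop is not redundant: by \eqref{Wqvq} (equivalently \eqref{Lql}) one has $\mathcal{L}_{n+1}(\bI,0,\ell)=\ell-\sum_{q=1}^{n}\mathcal{L}_q(\bI,0,\ell)$, and its nonnegativity encodes the facet of the simplex $\Delta(f)$ not passing through the origin; the $n$ two-sided bounds for $q\in[1;n]$ permit $\sum_{q\le n}\mathcal{L}_q(\bI,0,\ell)>\ell$ as soon as $n\ge2$. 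Again in Example \ref{example345}, take $\ell=1$, $\bI=(5,1)$: then $\mathcal{L}_1=2/3$ and $\mathcal{L}_2=1$ both lie in $[0,1]$, but $(5,1)\notin\Delta(f)$ because the form with the $+k$ term gives $\mathcal{L}_3((5,1),0,1)=-2/3<0$. So the ``face structure'' argument you sketch for discarding $q=n+1$ cannot work, and this is precisely the compatibility you admit you did not track.

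What your method does prove, once the algebra is done correctly, is: for $\ell\ge1$, $u^\ell x^{\bI}\in S_\Delta$ if and only if $\mathcal{L}_q(\bI,0,\ell)\ge0$ for every $q\in[1;n+1]$, equivalently $0\le\mathcal{L}_q(\bI,0,\ell)\le\ell$ for all such $q$ (the upper bounds follow because the $n+1$ values are nonnegative and sum to $\ell$). The version with upper bound $1$ and only $q\in[1;n]$ holds in the restricted setting in which it is actually used later, namely for monomials whose support lies in the fundamental parallelepiped $\Pi_{\Delta(f)}$ (e.g.\ representatives of $R_f^+$), where all $n+1$ parallelepiped coordinates $\mathcal{L}_q(\bI,0,\ell)$ lie in $[0,1)$; as a characterisation of membership in $S_\Delta$ for an arbitrary monomial of $\bC[u,x^{\pm}]$ it requires the range of $q$ extended and the bound rescaled, and your proposal as written does not bridge that gap.
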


\begin{cor}
Under the above situation, the Mellin inverse of
$M_{u^kx^{\bI}, \delta}(s)$ with properly chosen
periodic entire function $g(z)$ with period $\gamma$ gives
(\ref{IukxI}) as follows
\begin{equation}
I_{u^kx^{\bI}, \t \delta} (s)
= \int_{\check \Pi} g(z) \Gamma( z)
\prod_{q=1}^{n+1}\Gamma \bigl({\mathcal L}_q (\bI , z,k )\bigr) s^{-z} dz. 
\end{equation}
Here the integration path  $ \check \Pi $ enclosing all poles of $\Gamma( z):$ $ \bZ_{\leq 0}$
has the initial (resp. terminal ) asymptotic direction $ e^{- (\pi/2 -\epsilon ) i}$ (resp. $ e^{ (\pi/2 - \epsilon ) i}$) for some small $\epsilon$.
This  Mellin-Barnes integral 
defines a convergent analytic function in $ -\pi <arg\; s <\pi, $ $0 < |s|< \eta,$ for some $\eta >0.$
\label{cor:IukxI}
\end{cor}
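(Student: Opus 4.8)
The plan is to realise the stated Mellin--Barnes integral as the inverse Mellin transform of Proposition \ref{prop31}. By the classification (\ref{Ln+2})--(\ref{LqIz}) one of the $n+2$ gamma factors in (\ref{MukxIg}) is $\Gamma(\mathcal{L}_{n+2}(\bI,z,k))=\Gamma(z)$, so that
\[
M_{u^k x^{\bI},\tilde\Gamma}(z)=g_{\tilde\Gamma}(z)\,\Gamma(z)\prod_{q=1}^{n+1}\Gamma\bigl(\mathcal{L}_q(\bI,z,k)\bigr).
\]
Choosing a vertical line $L=\{\Re z=c\}$ inside the (nonempty) strip on which both the forward integral (\ref{MukxI})--(\ref{ MukxIz}) and $M_{u^k x^{\bI},\tilde\Gamma}$ are holomorphic, the Mellin inversion theorem gives
\[
I_{u^k x^{\bI},\t \delta}(s)=\frac{1}{2\pi i}\int_{L} M_{u^k x^{\bI},\tilde\Gamma}(z)\,s^{-z}\,dz .
\]
Since $g_{\tilde\Gamma}(z)$ is a polynomial in $e^{2\pi i z/\gamma}$, the function $g(z):=g_{\tilde\Gamma}(z)/(2\pi i)$ is entire and $\gamma$--periodic; substituting the displayed product and then deforming $L$ to the loop $\check\Pi$ yields the asserted formula.

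The only analytic input is Stirling's asymptotics combined with the linear identity (\ref{Wqvq}). On $L$ the $q$--th gamma argument $\mathcal{L}_q(\bI,z,k)$ has imaginary part $-\tfrac{B_q}{\gamma}\Im z$ by (\ref{Lq}), while $\Gamma(z)$ carries the full $\Im z$; and $\sum_{q=1}^{n+1}w_q=(0,-1,1)$ forces $1+\sum_{q=1}^{n+1}\tfrac{B_q}{\gamma}=2$. Hence, by Stirling, $\Gamma(z)\prod_{q=1}^{n+1}\Gamma(\mathcal{L}_q(\bI,z,k))$ decays like $|\Im z|^{A}e^{-\pi|\Im z|}$ along $L$ for some constant $A$. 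As $|s^{-z}|=|s|^{-c}e^{(\arg s)\Im z}$, this decay dominates the integrand's other factors exactly for $|\arg s|<\pi$, which produces the sector $-\pi<\arg s<\pi$. To pass from $L$ to $\check\Pi$ one shifts $L$ leftwards: the arguments $\mathcal{L}_q(\bI,z,k)$ have leading term $-\tfrac{B_q}{\gamma}z$ with $B_q>0$ and $\sum B_q=\gamma$, so along rays $z=\rho e^{i\phi}$ with $\phi$ near $\pi$ the factorial growth of the factors $\Gamma(\mathcal{L}_q)$ is exactly compensated, via $\Gamma(z)\Gamma(1-z)=\pi/\sin\pi z$ and weighted Stirling, by the factorial decay of $\Gamma(z)$, leaving an exponentially small factor $C^{\rho}$ with $C=\prod_q (B_q/\gamma)^{B_q/\gamma}<1$; since $|s^{-z}|$ grows at most like $|s|^{-\rho}$ there, the connecting arcs at infinity contribute nothing and $\int_L=\int_{\check\Pi}$, the prescribed asymptotic directions $e^{-(\pi/2+\epsilon)i}$ and $e^{(\pi/2+\epsilon)i}$ of $\check\Pi$ being precisely those that permit this leftward closing. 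Collecting by the residue theorem the poles of $\Gamma(z)$ at $z=-m$, where $\operatorname{Res}_{z=-m}\Gamma(z)=(-1)^m/m!$, then presents $\int_{\check\Pi}$ as a power series in $s$ with positive radius of convergence (again by the $e^{-\pi|\Im z|}$ estimate), establishing analyticity on $0<|s|<\epsilon$, $-\pi<\arg s<\pi$.

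I expect the real work to lie in the bookkeeping that legitimises these contour moves, not in the estimates. One must check that the forward Mellin integral (\ref{MukxI}) and $M_{u^k x^{\bI},\tilde\Gamma}$ genuinely share a nonempty strip of convergence, so that Mellin inversion is available; that the chain identifications (\ref{tildeGamma})--(\ref{ MukxIz}) are compatible enough that inverting $M_{u^k x^{\bI},\tilde\Gamma}$ returns $I_{u^k x^{\bI},\t \delta}$ itself and not a representative differing by the ambient torus contribution $H^{n}(\bT^{n+1})$; and, above all, that the $\gamma$--periodic factor $g_{\tilde\Gamma}(z)$ does not obstruct the deformation of $L$ into $\check\Pi$. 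This last point is exactly where the restriction $0<|s|<\epsilon$ (rather than all of $\bT$) enters: one needs the exponential frequencies occurring in $g_{\tilde\Gamma}$ to be small compared with $\gamma$, which has to be read off from the explicit chain decomposition (\ref{chains}) employed in the proof of Proposition \ref{prop31}. Once that control is secured, the Stirling and residue computations above close the argument.
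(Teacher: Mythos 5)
Your proposal follows essentially the same route as the paper's proof: Stirling's formula combined with $\sum_{q=1}^{n+1}B_q=\gamma$ and the reflection formula $\Gamma(z)\Gamma(1-z)=\pi/\sin\pi z$ to obtain exponential decay of the gamma product, followed by Mellin inversion and deformation of the contour onto $\check\Pi$ with the residues of $\Gamma(z)$ giving convergence for small $|s|$ in the sector $-\pi<\arg s<\pi$. The only difference is one of packaging: the points you flag as remaining bookkeeping (the existence of a common strip so that inversion applies, and the control of the $\gamma$-periodic factor so that the integrand still decays on $\check\Pi$) are exactly what the paper delegates to N\"orlund's technique and his Mellin-inversion theorem (\S 2.14 of N\"orlund's paper), rather than taking $g(z)=g_{\tilde\Gamma}(z)/(2\pi i)$ and deforming a vertical line directly as you do.
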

\begin{proof}
In applying the Stirling's formula
$$ \Gamma(z+1) \sim  (2\pi z)^{\frac 1 2} z^z e^{-z},\;\; \Re \; z \rightarrow +\infty,$$
to the integrand of (\ref{IukxI}), we take into account the relation
(\ref{sumBq}).
Here we remind us of the formula $\Gamma(z) \Gamma(1-z)= \frac{\pi}{sin\;\pi z}.$
As for the choice of the periodic function $g(z)$ one makes
use of N\"orlund's technique \cite{Nor}.
In this way we can choose such $g(z)$
that the integrand is of exponential decay on  $\check \Pi.$ Theorem on the Mellin inverse transform \cite[\S 2.14]{Nor}  states that the Mellin-Barnes integral (\ref{IukxI}) for properly chosen $g(z)$ recovers the integral $I_{u^kx^{\bI}, \t \delta} (s). $ 
\end{proof}

In general,  it is a difficult task to find concrete periodic function $g(z)$ that corresponds to 
$I_{u^kx^{\bI}, \t \delta} (s)$ for a cycle $\delta \in H_{n-1} (Z_f).$ The question how to choose $g(z)$ is a {\it desideratum } in the study of period integrals by means of Mellin transforms.
S-J.Matsubara-Heo makes a proposal to establish a correspondence between Pochhammer type cycles and $\Gamma-$series solutions to A-HG equation \cite[section 5]{Matsubara-curvilinear}.
 
\par
\begin{example} 
{\rm Let us illustrate  the above procedures
by a simple example.
$$f(x)= x_1^3x_2^{-1}+ x_1^3x_2^3 + s x_1^2 x_2+1.$$

$${\sf L}= \left
[\begin {array}{ccccc}
3& -1& 0&1\\
3& 3& 0&1\\
0& 0& 0&1\\
2& 1& 1 &1\\
\end {array}\right ],$$
$$({\sf L})^{-1}= \frac{1}{12}
\left[
\begin{array}{cccc}
 3 & 1 & -4 & 0 \\
 -3 & 3 & 0 & 0 \\
 -3 & -5 & -4 & 12 \\
 0 & 0 & 12 & 0
\end{array}
\right], \;\; \gamma =det ( {\sf L})= 12.$$
We have $$ {\mathcal L}_1(\bI,z,k)=\frac{i_1 +3i_2 -5 z}{12},
 {\mathcal L}_2(\bI,z,k)=\frac{3i_1-3i_2 -3z }{12},$$
$$ {\mathcal L}_3(\bI,z,k)=\frac{-4i_1 -4 z}{12} +k, {\mathcal L}_4(\bI,z)=\frac{12z}{12}.$$ Let us denote by
$ \alpha(1) =(3,3),$$ \alpha(2) =(3,-1),$$ \alpha(3) =(0,0),$ $
\alpha(4) =(2,1).$ Then we have
$$ B_1= vol(\tau_1)=2! vol(\alpha(2),\alpha(3),\alpha(4))=5.$$
Similarly $B_2=vol(\tau_2)=3,$ $B_3= vol(\tau_3)=4.$  

It is worth noticing that $h.c.f. \bB=1.$
Thus we have $\gamma = |\bB| =2! vol (\Delta(f)) =12.$

We can look at the  base representatives of $R_f^+$ with the following support points:
$$ \{(i_1,0,1)_{i_1=1}^3, (i_2,1,1)_{i_2=1}^3,(i_3,2,1)_{i_3=2}^3, (4,1,2), (i_3,2,2)_{i_3=4}^5 \}.$$
We have $dim (R_f^+)=11, $ $R_f \cong \bC \oplus R_f^+. $

Later we see (Proposition \ref{prop61}) that the set of vectors in $(\frac{1}{12} \bZ)^3$ given by $ ({\mathcal L}_1(\bI,0,\ell),$ ${\mathcal L}_2(\bI,0,\ell),$ $ {\mathcal L}_3(\bI,0,\ell)),$ $ (\ell,\bI)$ of the above list of support points of base elements of $R_f^+$ coincides with 
$  (\frac{B_1 k}{\gamma}, \frac{B_2k}{\gamma},\frac{B_3 k}{\gamma}  ) =(\frac{5k}{12}, \frac{3k}{12},\frac{4k}{12}  ),$
$ k \in [1; 11]$ modulo $\bZ^3.$}


\begin{figure}
	\centering
	\includegraphics[width=15cm]{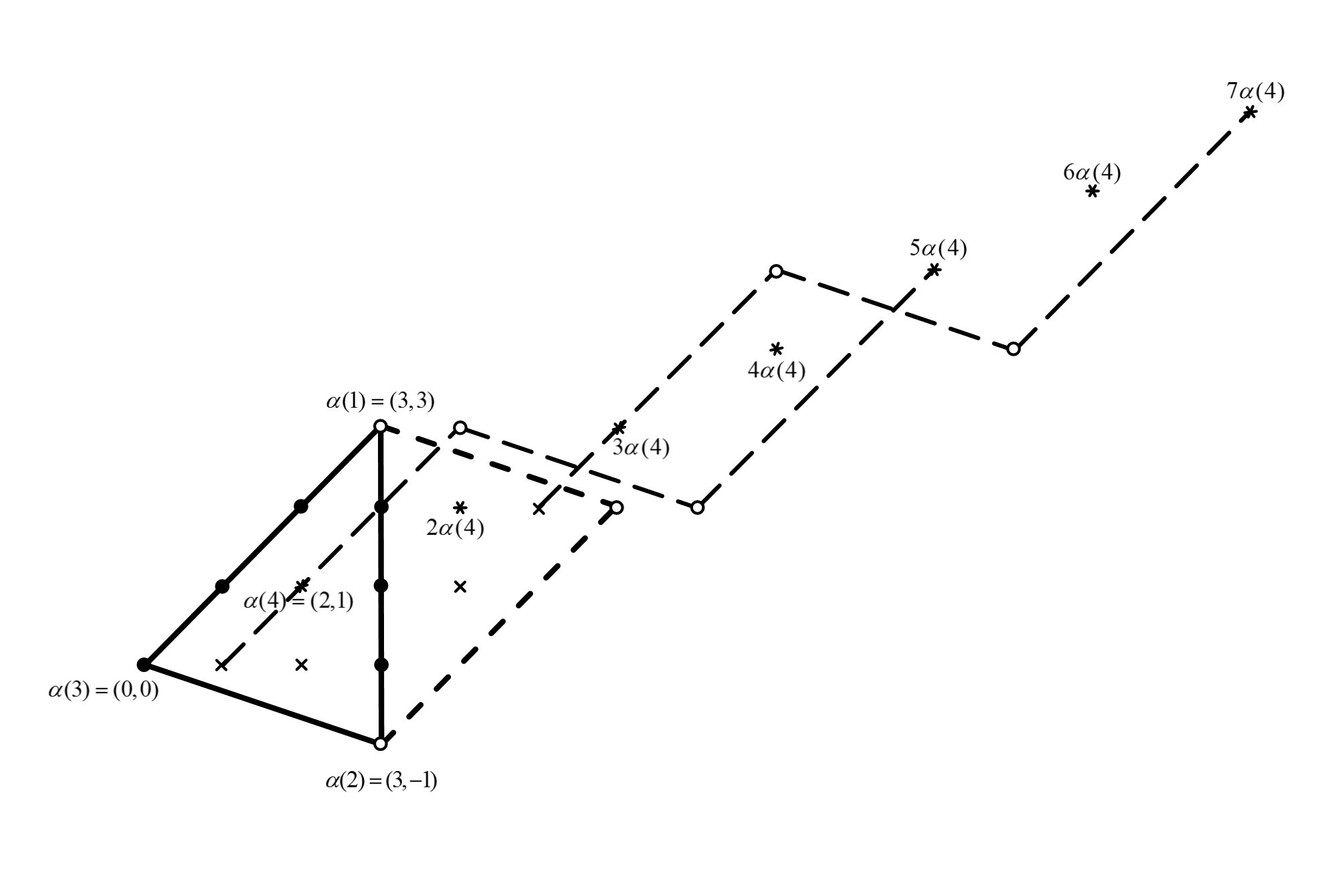}
\begin{minipage}{\linewidth}
		\begin{align*}
		\bar{\lambda}(u^3 x^{3\alpha(4)}) & =\bar{\lambda}(ux_1^3)\\
     \bar{\lambda}(u^5 x^{5\alpha(4)}) & =\bar{\lambda}(ux_1)\\
     \bar{\lambda}(u^7 x^{7\alpha(4)}) & =\bar{\lambda}(u^2 x_1^5 x_2^2)\\
     R_f: \bullet, \times \text{\:\:\: \:\:}&\text{\: \:\:\: \:\:} \ast: k\alpha(4)
		\end{align*} \end{minipage}
		\label{fig:quotientring}
\end{figure}

\label{example345}
\end{example}

\newpage
\begin{example} 
{\rm Now we consider the following (Laurent) polynomial in three variables.
$$f(x)= x_1 x_2 x_3(  x_1+ x_2+ x_3 + s +( x_1 x_2 x_3)^{-1}).$$

$${\sf L}= \left[
\begin{array}{ccccc}
 2 & 1 & 1 & 0 & 1 \\
 1 & 2 & 1 & 0 & 1 \\
 1 & 1 & 2 & 0 & 1 \\
 1 & 1 & 1 & 1 & 1 \\
 0 & 0 & 0 & 0 & 1
\end{array}
\right]$$

$$({\sf L})^{-1}= \frac{1}{4}
\left[
\begin{array}{ccccc}
 3 & -1 & -1 & 0 & -1 \\
 -1 & 3 & -1 & 0 & -1 \\
 -1 & -1 & 3 & 0 & -1 \\
 -1 & -1 & -1 & 4 & -1 \\
 0 & 0 & 0 & 0 & 4
\end{array}
\right], \;\; \gamma =det ( {\sf L})= 4.$$

We have $ {\mathcal L}_1(\bI,z,k)=\frac{3i_1 -i_2 -i_3- z}{4},$ $ {\mathcal L}_2(\bI,z,k)=\frac{-i_1 +3i_2 -i_3- z}{4},$ $ {\mathcal L}_3(\bI,z,k)=\frac{-i_1 -i_2 +3i_3- z}{4}$
$$ {\mathcal L}_4(\bI,z,k)=\frac{-i_1 -i_2 -i_3- z}{4} +k,  \;\;\;\; {\mathcal L}_5 (\bI,z,k)=\frac{4 z}{4}.$$
In this case, we have $B_1=\cdots = B_4=1$ and $R_f^+\cong \oplus_{k=1}^3 \bC ( x_1 x_2 x_3)^k, $  $R_f \cong \bC \oplus R_f^+.$}
\end{example}

{
\center{\section{Oscillating integrals}\label{oscillating}}
}
Assume $\tilde f(x) =f_0(x) +1$ such that $supp(f_0) \not \ni \{0\} \in \Delta(f_0)$ and $\tilde f$ be a $\Delta(f_0)-$regular polynomial, $Z_{\tilde f}$ non singular.
In this situation we consider the deformation $Z_{f_0+s}$ of $Z_f.$ For generic value of $s \in \bC,$
 a smooth afffine variety $Z_{f_0+s}$ is topologically equivalent to  $Z_f.$

In choosing the coefficients of $f_0$ in a generic position, we may assume that the critical points of $f_0$ i.e. those of $\tilde f$  are of Morse type singularities $c_1, \cdots, c_\gamma$ with $\gamma = n! vol(\Delta(f_0)) $. We construct Lefschetz thimble associated to each critical point $c_j$ as follows.

\begin{dfn} For a fixed complex number $u \in \bC^{\times}$ and  $j \in [1 ; \gamma],$ we consider a path $T^{-}_j$ on $\bC_s$ that starts from $s_j=-f_0(c_j)$ and $\Re\;  (us) \rightarrow  +\infty.$ 
For a 1-parameter deformation of a vanishing cycle $\delta_{j} \in H_{n-1}(Z_{f_0+s})$ with $\delta_{j}$ vanishing at $s_j,$ the cycle $\Gamma_j := \{(s, \delta_{j}); s \in T^{-}_j\}$ of the relative homology $ H_n(\bT^{n}, \Re\; ( uf_0 ) >0; \bZ) $ is called  Lefschetz thimble associated to $\delta_j.$ 
\label{Lefschetz}
\end{dfn}
The set $\mathcal U \subset \bC$ of  generic values of $u$ is defined  by the condition 
\begin{equation}
Arg\; u \not = - Arg (s_{i}- s_{j}) \pm \frac{\pi}{2} 
\label{genericarg}
\end{equation}
for all distinct critical values $s_{i} \not = s_{j} .$ The open set $\mathcal U$ consists of open sectors (fans) within the limiting half-lines of the form   $Arg\; u  = - Arg (s_{i}- s_{j}) \pm \frac{\pi}{2}.$
It is known that for a generic value of $u$ the Lefschetz thimbles $\{ \Gamma_1,\cdots, \Gamma_\gamma \}$ form a basis of the relative homology group $ H_n(\bT^{n}, \Re\; (uf_0 ) > >0; \bZ),$ (see \cite[1.5]{pham83}, \cite[(4.4)]{pham85}). 

Now we introduce the oscillating integral with the phase $f_0(x)$ in the following way,
\begin{equation}
J_{g,\Gamma}(s,u) = \int_\Gamma e^{-u f_0(x)} g(s,x,u) \omega_0
\label{Jdgsu}
\end{equation}  
for a Laurent polynomial $g(s,x,u)$ $\in$ $\bC[s, u,  x_1^{\pm},\cdots, x_n^{\pm}],$ the volume form of the torus $\omega_0=\frac{dx}{x^\b1}$ and $\Gamma$  $\in$ $ H_n(\bT^{n}, \Re\; (uf_0 ) > >0; \bZ).$
Let $C_{i,j}$ be an oriented curve that presents a union of two non-compact non self-intersecting
curves, each of which are located inside of a sector of $\mathcal U$ near infinity. For example, we can take a union $C_{i,j}=C^+_{i,j} -C^-_{i,j}$ of  a curve $C^+_{i,j}$ with the asymptote
 $Arg \;u  =- Arg (s_{i}- s_{j}) + \frac{\pi}{2 }+ \epsilon$ and a curve $C^-_{i,j}$ with the asymptote 
 $Arg \;u  = - Arg (s_{i}- s_{j}) + \frac{\pi}{2 }- \epsilon.$
For such a curve $C = C_{i,j}$ and $\Gamma  \in H_n(\bT^{n}, \Re\; (uf_0 ) > >0; \bZ)$ associated to $ \delta \in H_n(\bT^{n}, \Re\; (uf_0 ) > >0; \bZ)$, we can define the Laplace transform of $ J_{g, \Gamma}(u)$ 
\begin{equation}
 L_{C}(J_{g, \Gamma})(s) = \int_C e^{-us} J_{g, \Gamma}(s, u) \frac{du}{u} = \int_{C} (\int_\Gamma e^{-u (f_0(x)+s)} g(s,x,u) \omega_0 ) \frac{du}{u}
\label{LCjdgs}
\end{equation}
if $\Re \;u (f_0+s)|_{\delta} >0$ near the boundary of $C \times \delta$ at infinity. We recall the notation $\omega_0 = \frac{dx}{x^\b1}.$  For example, in the case of  $C=C_{i,j}$ we can choose a Lefschetz thimble $\Gamma \in \bZ \Gamma_{i} + \bZ \Gamma_{j} + \sum_{\ell}  \bZ \Gamma_{\ell}$ where the summation is taken over $ \Gamma_{\ell}$ such that $Arg\;s_{\ell} \in [Arg\;s_{i}, Arg\;s_{j}].$
The Laplace transform $L_{C_{i,j}}(J_{g, \Gamma})(s)$ is well defined in a sector $\{s \in \bC; -\pi+\epsilon +Arg\; (s_{i}- s_{j}) < Arg \;s  < -\epsilon + Arg\; (s_{i}- s_{j}), \epsilon >0\}.$

It is not simple to consider $L_{C_{i,j}}(J_{g, \Gamma})(s)$  outside the given sector, even though it is possible to extend analytically the Laplace transform to an open subset of $\bC.$ This requires a detailed study of the asymptotic behaviour of  $J_{g, \Gamma}(s, u)$ from which \cite{Baty1} preferred to abstain. 

In \cite{BJL}, the authors derived properties of the oscillating integrals from the period integrals
by means of Laplace transform (\ref{Cetaint}). In particular they established a relation between the Stokes matrix of oscillating integrals and the monodromy of period integrals. This procedure was followed in \cite{Tan04} to verify Dubrovin's conjecture \cite{Dub2}
for the quantum cohomology of projective space.

Now we look at a $\bC[s]$ module $S_{\Delta(f_0)}[s]$ with the basis $\{u^kx^{ \alpha}\}, $ $ \alpha \in k \Delta(f_0).$
In this situation thanks to Theorem ~\ref{thm13} the $\bC[s]$ module
\begin{equation}
H_{DR}^{n-1}(s)\cong \frac{S_{\Delta(f_0)}[s]}{{\mathcal D}_u S_{\Delta(f_0)}[s] + \sum_{i=1}^n{\mathcal D}_{x_i}  S_{\Delta(f_0)}[s] } 
\label{HDR}
\end{equation}
represents the $(n-1)$th relative de Rham cohomology group  $H^{n-1}(Z_{f_0+s})$ (compare with \cite[\S 11]{Baty1}).
Here  the operators ${\mathcal D}_u,$ ${\mathcal D}_{x_i},  i \in [1;n]$ are defined for $f=f_0+s$ as in Theorem ~\ref{thm13}.

The following is a straightforward consequence of the Stokes' theorem and the fact that the integrand function $e^{-u(f_0(x)+s)} g(s,x,u) =0$ at the infinity boundary of $\delta \times C$.

\begin{lem}\label{lem42}
For $h(s,x,u) \in {\mathcal D}_u S_{\Delta(f_0)}[s] + \sum_{i=1}^n{\mathcal D}_{x_i}  S_{\Delta(f_0)}[s],$ the Laplace transform of the oscillating integral 
$L_C(J_{h, \Gamma})(s)$ vanishes   identically. 
\end{lem}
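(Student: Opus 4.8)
The plan is to show that the Laplace transform of the oscillating integral kills exactly the relations defining the de Rham cohomology group $H_{DR}^{n-1}(s)$ in $(\ref{HDR})$, so that $L_C(J_{\cdot, \Gamma})(s)$ descends to a well-defined pairing on $H_{DR}^{n-1}(s)$. The mechanism is integration by parts: the operators $\mathcal D_u$ and $\mathcal D_{x_i}$ are precisely designed so that applying them under the sign $e^{-uf_0(x)}$ produces a total logarithmic derivative.

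First I would treat the generators of the form $h = \mathcal D_{x_i}(g) = e^{uf_0}\theta_{x_i}(e^{-uf_0}g)$ for $g \in S_{\Delta(f_0)}[s]$. Plugging into $(\ref{Jdgsu})$,
\begin{equation}
J_{h,\Gamma}(s,u) = \int_\Gamma e^{-uf_0(x)}\, e^{uf_0(x)}\theta_{x_i}\bigl(e^{-uf_0(x)}g(s,x,u)\bigr)\,\omega_0
= \int_\Gamma \theta_{x_i}\bigl(e^{-uf_0(x)}g\bigr)\,\omega_0 .
\end{equation}
Since $\omega_0 = \frac{dx}{x^\b1}$ is the invariant logarithmic volume form on $\bT^n$, we have $\theta_{x_i}(\varphi)\,\omega_0 = d\bigl(\varphi\, \iota_{x_i}\omega_0\bigr)$ up to sign, i.e.\ it is exact as an $n$-form on $\bT^n$; here $\iota_{x_i}$ denotes contraction with $x_i\frac{\partial}{\partial x_i}$. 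Then by Stokes' theorem $J_{h,\Gamma}(s,u) = \int_{\partial\Gamma}(\cdots)$ vanishes because a Lefschetz thimble $\Gamma$ has no boundary inside $\bT^n$ and the integrand $e^{-uf_0(x)}g$ decays at the relevant infinity (the defining property $\Re(uf_0)\to+\infty$ along $T^-_j$ in Definition \ref{Lefschetz}, together with polynomial growth of $g$). Hence $L_C(J_{h,\Gamma})(s)$, being the $C$-integral of the identically vanishing function $J_{h,\Gamma}(s,\cdot)$, is zero.

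Next I would handle $h = \mathcal D_u(g) = e^{uf_0}\theta_u(e^{-uf_0}g)$. Here one integrates by parts in the $u$-variable inside the Laplace transform rather than in $x$:
\begin{equation}
L_C(J_{h,\Gamma})(s) = \int_C e^{-us}\Bigl(\int_\Gamma \theta_u\bigl(e^{-uf_0(x)}g\bigr)\,\omega_0\Bigr)\frac{du}{u}
= \int_\Gamma \Bigl(\int_C e^{-us}\,\theta_u\bigl(e^{-uf_0(x)}g\bigr)\frac{du}{u}\Bigr)\omega_0 ,
\end{equation}
after Fubini (justified by the absolute convergence coming from $\Re\,u(f_0+s)|_\delta>0$ near the ends of $C\times\delta$, exactly the hypothesis stated before $(\ref{LCjdgs})$). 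Writing $\theta_u = u\frac{\partial}{\partial u}$, the inner integral is $\int_C e^{-us}\,\frac{\partial}{\partial u}\bigl(e^{-uf_0(x)}g\bigr)\,du$, and integrating by parts in $u$ throws the derivative onto $e^{-us}$, giving $s\int_C e^{-us}e^{-uf_0(x)}g\,du$ plus boundary terms at the ends of $C$; the latter vanish because $e^{-u(f_0(x)+s)}g\to 0$ there. One then observes that $s\int_C e^{-u(f_0+s)}g\,du$ is, up to the combination with the remaining $\mathcal D_u$-relations, again of the killed form — more precisely, $\mathcal D_u(g)$ and $g$ differ under Laplace transform by the operator $s + \theta_s$ acting on $L_C(J_{g,\Gamma})$, which is the statement that $L_C$ intertwines $\mathcal D_u$ with multiplication by a Gauss–Manin operator; the upshot is that the image of any element of $\mathcal D_u S_{\Delta(f_0)}[s]$ is a combination of terms each recognizably zero or each of the form already treated. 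The cleanest bookkeeping is to note that on $\Gamma$ the function $\int_C e^{-u(f_0(x)+s)}\,du/u$ is a fixed $x$-dependent kernel, and $\theta_u$ under the integral is equivalent, after the $u$-integration by parts, to $-(s+f_0(x))$ times that kernel, so that $L_C(J_{\mathcal D_u g,\Gamma}) = -(s+f_0(x))\cdot(\text{kernel against }g)$-type integral, which one recombines with the Jacobian relations.

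I expect the main obstacle to be not the formal integration by parts but the justification of the boundary-term vanishing and of Fubini uniformly in $s$: one must check that along a Lefschetz thimble $\Gamma$ (whose branch $T^-_j$ is chosen so $\Re(us)\to+\infty$) together with the contour $C$ (asymptotic to the critical half-lines of $(\ref{genericarg})$), the product $e^{-u(f_0(x)+s)}g(s,x,u)$ together with all its first-order $\theta$-derivatives tends to zero at every end of $\Gamma\times C$. This is where the genericity condition on $u$ (open sectors of $\mathcal U$) and the precise definition of the thimble enter; once that uniform decay is in place, the rest is the routine Stokes/Leibniz computation sketched above, and the lemma follows since every element of ${\mathcal D}_u S_{\Delta(f_0)}[s] + \sum_i{\mathcal D}_{x_i}S_{\Delta(f_0)}[s]$ is a finite $\bC[s]$-linear combination of such generators.
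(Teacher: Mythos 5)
Your treatment of the generators $\mathcal D_{x_i}(g)$ is correct and is essentially the paper's own (one-sentence) argument: the integrand becomes an exact form and the contribution of the infinity boundary of $\delta \times C$ vanishes by the decay hypothesis, so Stokes' theorem finishes. The gap is in the $\mathcal D_u$ case, and it originates in the definition of $\mathcal D_u$: the paper states explicitly, just before the lemma, that $\mathcal D_u$ is defined for $f=f_0+s$, i.e. $\mathcal D_u(g)=e^{u(f_0+s)}\theta_u\bigl(e^{-u(f_0+s)}g\bigr)=\theta_u g-u(f_0+s)g$, not with $f_0$ alone as in your computation. With the paper's operator the Laplace kernel $e^{-us}$ is already absorbed: the full integrand of $L_C(J_{\mathcal D_u g,\Gamma})$ is $\theta_u\bigl(e^{-u(f_0+s)}g\bigr)\,\frac{du}{u}\wedge\omega_0=\partial_u\bigl(e^{-u(f_0+s)}g\bigr)\,du\wedge\omega_0$, a total $u$-derivative, and Stokes together with the vanishing of $e^{-u(f_0+s)}g$ at the infinity ends of $C$ gives zero outright; no intertwining or recombination is needed.

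With your reading of $\mathcal D_u$ the argument as written does not close. After the integration by parts in $u$ you are left with $s\int_\Gamma\int_C e^{-u(f_0+s)}g\,du\,\omega_0$, and the concluding sentences (``$L_C$ intertwines $\mathcal D_u$ with $s+\theta_s$'', ``recombines with the Jacobian relations'') do not prove that this term vanishes; indeed, if $L_C(J_{\mathcal D_u g,\Gamma})$ were equal to $(s+\theta_s)L_C(J_{g,\Gamma})$ it would in general be nonzero, which contradicts the very statement you are proving, so the bookkeeping paragraph is internally inconsistent rather than merely incomplete. The lemma can still be reached from your starting point, but only by an explicit extra observation you did not make: $g\in S_{\Delta(f_0)}[s]$ is polynomial in $u$ with non-negative powers, so each $e^{-u(f_0+s)}u^k\,du$ has a primitive of the form $e^{-u(f_0+s)}\times(\text{polynomial in }u)$ decaying at both ends of $C$, whence $\int_C e^{-u(f_0+s)}g\,du=0$ pointwise on $\Gamma$ wherever it converges, and the leftover term dies by the same Stokes-plus-decay mechanism. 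Either adopt the paper's definition of $\mathcal D_u$ (which makes the whole $\mathcal D_u$ case identical in shape to your $\mathcal D_{x_i}$ case), or insert this step; as it stands the $\mathcal D_u$ half of the lemma is not proved. Your remarks on justifying Fubini and the boundary decay are consistent with the hypothesis the paper imposes before (\ref{LCjdgs}) and are not the issue.
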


The vanishing of the Laplace transform for $s$ in an open set means that of $J_{h,\Gamma}(s,u)$ itself. 
Thus for the fixed value $s=1$, $\tilde f(x) =f_0(x)+1,$ the following oscillating integral vanishes 
$$\int_\Gamma e^{-u \tilde f(x)}{\tilde  g}(x,u) \omega_0 $$
for ${\tilde g}(x,u) \in {\cal D}_u S_{\Delta(f_0)}+ \sum_{i=1}^n{\cal D}_{x_i}  S_{\Delta(f_0)}.$

In summary, we arrived at the following conclusion:

\begin{itemize}
 \item
 The ring  (\ref{RF+})  is well adapted to study non-trivial oscillating integrals  like
\begin{equation}
J_{{\tilde g}, \Gamma}(u) =\int_\Gamma e^{-u \tilde f(x)} {\tilde g}(x,u) \omega_0 
\label{JgGamma}
\end{equation}
and their Laplace transforms.
\item
The ring   (\ref{HDR}) is well adapted to study non-trivial oscillating integrals  $J_{g, \Gamma}(s,u)$  (\ref{Jdgsu})
and their  their Laplace transforms.
\end{itemize}

The Brieskorn lattice $G_0$ defined in \cite[2.c]{DS} gives a proper space to examine the oscillating integrals
(\ref{Jdgsu}) for a fixed $u$ and $s \in \bC.$ It would correspond to  $\frac{S_\Delta^{+}}{\sum_{i=1}^n{\mathcal D}_{x_i}  S_\Delta }$ after our notation. 
As the parameter $"u"$ is fixed to a "Planck constant" in \cite{DS}, there is no room to consider the Laplace transform (\ref{LCjdgs}).
In fact, by means of an argument similar to \cite[Proposition 5.2]{TaZ}, it is possible to show that the integrals \eqref{JgGamma} represent non-trivial functions in the parameter $"u"$ (and in $"s"$ for \eqref{Jdgsu} with the integrand from \eqref{HDR}) restricted on a properly chosen simply connected open set. In other words, we have the following: 

\begin{lem}\label{vanishingoscil}
The condition in Lemma \ref{lem42} is a necessary and sufficient for the vanishing of the integral $L_C(J_{h, \Gamma})(s)$, \eqref{LCjdgs}. 
\end{lem}

For $\eta \in \bT$ such that $\gamma \cdot arg\; \eta \not \equiv 0 (mod \; 2 \pi),$ we consider the integration path
$C(\eta)$ that shall be taken as the contour from $\infty$ along a parallel to the direction $arg\; s = arg \; \eta$ sufficiently far away on the left, turning around all the singular loci (\ref{Sloci}) in the anticlockwise sense and back to $\infty$ along a parallel to the direction $arg\; s = arg \; \eta$ sufficiently far away on the right \cite[Theorem 2]{BJL}. For such a path $C(\eta)$ and $ \Re ( u \eta) \geq 0,$ the Laplace transform of $I_{g, \t \delta} (s)$  
(\ref{Ihdelta}) can be defined as follows
\begin{equation}
\int _{C(\eta)} e^{us} I_{g, \t \delta} (s) ds  
\label{Cetaint}
\end{equation}
that is equal to an oscillating integral  $J_{g, \Gamma}(u)$
for a Lefschetz thimble $\Gamma$ associated to the vanishing cycle $\delta$ (see \cite[3.3]{pham85}).
In (\ref{Cetaint}), the path $C(\eta)$ can be homotopically deformed into a union of paths turning around the singular points that correspond to the cycle $\delta.$

Though $C(\eta)$  can be deformed into a zero chain inside the relative homology group  $H_1(\bT, \infty; \Z)$, such a deformation is prohibited for (\ref{Cetaint}). In fact, in the course of a deformation of $C(\eta)$   into a zero chain, the integral would  diverge due to the same reason as explained to define the curve $C_{i,j}$ in (\ref{LCjdgs}).
The inverse Laplace transform (\ref{Cetaint})  shall be defined for $C(\eta)$  with a single asymptotic direction  $\eta \in \bT,$ $\gamma \cdot arg\; \eta \not \equiv 0 (mod \; 2 \pi) $ tending to the infinity.
This consideration suggests that $J_{g, \Gamma_j}(u)= J_{g, \Gamma_j}(0,u),$ $j \in [1; \gamma],$ 
form a $\C$ vector space of dimension $\gamma$ and not of dimension $\bar \gamma = W_{n-1}(H^{n-1}(Z_f))$
discussed in Proposition \ref{prop61}, Remark \ref{WHnZ}. See also the remark after
Proposition \ref{Mfiltration}.

Now we assume $f_0(x)$ to be a Laurent polynomial like in (\ref{f0}).
As a consequence of Proposition \ref{prop31} and Corollary 
\ref{cor:IukxI}, we get the HG equation (see \eqref{RkI} below also) for 
$I_{u, \t \delta} (s)=I_{u^1 x^{0}, \t \delta} (s),$ 
$$R_{(1, 0)} (s, \vartheta_s) I_{u, \t \delta} (s) =0,$$
with
\begin{equation}
R_{(1, 0)} (s, \vartheta_s) = (-\vartheta_s)_\gamma - s^\gamma \prod_{q=1}^{n+1} (\frac{B_q \vartheta_s}{\gamma})_{B_q}
\label{R10}
\end{equation}
where
\begin{equation}
(\alpha)_m = \alpha(\alpha+1) \cdots (\alpha+m-1),
\label{pochhammer}
\end{equation} the Pochhammer symbol.
On  applying the integration by parts, we establish the differential equation with irregular singularities at $u=\infty$
 for $J_{u^2, \Gamma}(u)$ that we denoted by $J_{u^2, \Gamma}(0,u)$ after notation
 \eqref{Jdgsu}:

\begin{equation}
 [u^\gamma -  \prod_{q=1}^{n+1} (\frac{- B_q \vartheta_u}{\gamma})_{B_q}] J_{u^2, \Gamma}(u)=0
\label{Jug}
\end{equation}
for every $\Gamma \in H_n(\bT^n, \Re (u f_0) >>0; \bZ). $
By means of the change of variables $e^{t_1}=(zu)^\gamma$ for the quantization parameter $z$, the equation  (\ref{Jug}) is transformed into
\begin{equation}
 [e^{t_1} -  z^\gamma \prod_{q=1}^{n+1} (- B_q \frac{\partial}{\partial t_1})_{B_q}] \tilde J(t_1, z)=0
\label{Jt1}
\end{equation}
that coincides with the equation for the $J$ function of  the weighted projective space ${\mathbb P}_\bB$ \cite [Corollary 1.8]{CCLT}, \cite[(5.1)]{TaU13}, \cite[Remark 4.3.4 (3)]{DouaiMann}.
Thus we can further develop arguments related to Stokes phenomena of solutions to (\ref{Jt1}) in following \cite{TaU13}. 
See \S \ref{WHP} Weighted  projective space $\mathP_{\bB}$.

{
\center{\section{
Filtration of  period integrals
}\label{filtration}}
}

Now we can state the relationship between the Hodge structure of
the $PH^{n}(\bT^n \setminus Z_{{f}})$ (\ref{PHn}) and the poles of the Mellin transform (\ref{MukxIg}) .

We recall the notation: under the situation described in \S1, the mixed Hodge structure of $PH^{n}(\bT^n \setminus Z_{{f}})$ 
is defined as follows:
$$   Gr_F^{p} Gr^w_{q}PH^{n}(\bT^n \setminus Z_{{f}}) = \frac{(F^p \cap W_q) + W_{q-1}}{ (F^{p+1} \cap W_q) + W_{q-1} }.$$

\begin{thm}
1) Let $u^k x^\bI \in R_f^+$ be a monomial representative such that
$\rho^+( u^k x^\bI ) \in Gr_F^{n-k} Gr^w_{n+1}PH^{n}(\bT^n \setminus Z_{{f}}),$
$0 \leq k \leq n.$
Then the following inequalities hold 
$$0 <{\mathcal L}_q(\bI,0,k) < 1 $$
for $q \in  [1; n+1].$
The poles of Mellin transform  (\ref{MukxIg})  located on the positive real axis $\bR_{>0}$
are included in the infinite set with semi-group structure called poles of positive direction,
\begin{equation}
   \frac{\gamma}{B_q} \left( {\mathcal L}_q(\bI,0,k)  + \bZ_{\geq 0}  \right)\\; q \in [1; n+1],
\label{PosPoles}
\end{equation}
while poles on the negative real axis are included in $\bZ_{\leq 0},$ i.e. each period integral is holomorphic at $s=0.$
\par
2) For a monomial satisfying $\rho^+( u^k x^\bI ) \in Gr_F^{n-k} Gr^w_{n+1+r}PH^{n}(\bT^n \setminus Z_{{f}}),$
$ k \in [0; n], r \in [1 ;  n-1], $
 there exist $r$ indices $ q_1, \cdots, q_{r}, r \in [1;  n+1]$ such that
$   {\mathcal L}_{q_j}(\bI,0,k)=0$ for $j \in [1;r] $ 
but no such  $r+1$ tuple of indices  $ q_1, \cdots, q_{r+1}$ exists.
In other words,  the Mellin transform
\begin{equation}
  \frac{\prod_{q=1}^{n+1} \Gamma({\mathcal L}_q(\bI,z,k))}{\Gamma(1-z)}  
\label{MellinGamma}
\end{equation} 
of the period integral $I_{u^kx^{\bI}, \t \delta} (s)$
has poles of order $r$ at $z=0.$

\label{thm51}
\end{thm}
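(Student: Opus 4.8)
The plan is to deduce the statement directly from the dictionary between the quotient ring $R_f^+$ and the mixed Hodge structure of $PH^n(\bT^n\setminus Z_f)$ (Theorem \ref{thm12}, Theorem \ref{thm13}) on the one hand, and the explicit form of the Mellin transform (\ref{MukxIg}) on the other. The crucial translation device is the $\mathcal{E}$-filtration: by part 3) of Theorem \ref{thm13}, a monomial representative $u^k x^\bI$ with $\rho^+(u^k x^\bI)\in Gr^{n-k}_F Gr^w_\bullet PH^n(\bT^n\setminus Z_f)$ lies in $\mathcal{E}^{-k}(R_f^+)\setminus\mathcal{E}^{-k+1}(R_f^+)$ precisely when it sits in the $k$-th homogeneous piece $R_f^k$, which via (\ref{Rfni}) matches $F^{n+1-(n-k)}/F^{n+2-(n-k)}$, i.e. the Hodge level $n-k$. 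So the hypotheses pin down $k$ and the filtration level of the class, and what remains is to read off conditions on the linear forms $\mathcal{L}_q(\bI,0,k)$.

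For part 1), the hypothesis $\rho^+(u^k x^\bI)\in Gr^w_{n+1}$ means the monomial belongs to the image of $I^{(1)}_\Delta$ but (since it is primitive, not in the lower weight pieces) to no deeper ideal $I^{(\ell)}_\Delta$, $\ell\ge 2$; by the definition of $I^{(\ell)}_\Delta$ this says the point $\bI/k$ lies in the interior of $\Delta(f)$ relative to all faces of codimension $1$, i.e. it is an \emph{interior} integral point of $k\Delta(f)$ in the strong sense. Combining this with the face description of $\Delta(f)$ in Corollary \ref{cor32}, namely $\Delta(f)=\{\beta:0\le \mathcal{L}_q(\beta,0,1)\le 1,\ q\in[1;n+1]\}$, and homogenizing by $k$, the interiority translates into the strict inequalities $0<\mathcal{L}_q(\bI,0,k)<1$ for every $q\in[1;n+1]$. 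Once these strict inequalities are in hand, the location of the poles is immediate: from (\ref{MellinGamma}) (equivalently the product $\prod_{q=1}^{n+1}\Gamma(\mathcal{L}_q(\bI,z,k))$ divided by $\Gamma(1-z)$ coming from (\ref{MukxIg})–(\ref{Ln+2})), the poles in $z$ along $\bR_{>0}$ arise from the poles of $\Gamma(\mathcal{L}_q(\bI,z,k))$ at $\mathcal{L}_q(\bI,z,k)\in\bZ_{\le 0}$; using the affine form $\mathcal{L}_q(\bI,z,k)=(\langle v_q,\bI\rangle - B_q z + C_q k)/\gamma$ with $B_q>0$, solving for $z$ gives exactly the semigroup $\frac{\gamma}{B_q}\big(\mathcal{L}_q(\bI,0,k)+\bZ_{\ge 0}\big)$. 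Since $0<\mathcal{L}_q(\bI,0,k)$, no such pole lands at $z=0$ or on $\bZ_{\le 0}$; the only negative-real poles come from the numerator factor $\Gamma(z)$ (absent here) or are cancelled, so each $I_{u^kx^{\bI},\t\delta}(s)$ is holomorphic at $s=0$, and the Mellin-Barnes representation then gives holomorphy there for the period integral itself.

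For part 2), the hypothesis $\rho^+(u^kx^\bI)\in Gr^w_{n+1+r}$ with $1\le r\le n-1$ means, again by part 2) of Theorem \ref{thm13}, that the monomial lies in $I^{(r+1)}_\Delta$ but not in $I^{(r)}_\Delta$; i.e. the point $\bI/k$ lies on a face $\Delta'\subset\Delta(f)$ of codimension exactly $r$ (and on no face of strictly larger codimension). A codimension-$r$ face of the simplex $\Delta(f)$ is cut out by setting exactly $r$ of the defining inequalities in Corollary \ref{cor32} to equality; thus there are exactly $r$ indices $q_1,\dots,q_r$ with $\mathcal{L}_{q_j}(\bI,0,k)=0$ and no $(r+1)$-st such index. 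Correspondingly each factor $\Gamma(\mathcal{L}_{q_j}(\bI,z,k))$ has a simple pole at $z=0$ (since $\mathcal{L}_{q_j}(\bI,0,k)=0$ and $B_{q_j}\neq 0$ makes the zero of the argument simple in $z$), while $1/\Gamma(1-z)$ contributes no pole and no zero at $z=0$; hence (\ref{MellinGamma}) has a pole of order exactly $r$ at $z=0$.

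The main obstacle I expect is the precise bookkeeping in the middle step: translating "lies in the image of $I^{(\ell)}_\Delta$ but not $I^{(\ell+1)}_\Delta$" into "lies on a codimension-$\ell$ face but not a deeper one" requires being careful about which cohomology class the \emph{representative} $u^k x^\bI$ stands for (the relation (\ref{RfRF}), (\ref{GrE}) must be used to separate the weight graded piece cleanly from the Hodge filtration index $n-k$), and about the boundary cases $r=0$ (part 1), where no face condition is active and $\bI/k$ is strictly interior) versus $1\le r\le n-1$ (part 2). One must also check that the simplicial structure of $\Delta(f)$ — guaranteed by the main {\bf Assumption} — is what forces a codimension-$r$ face to be the intersection of exactly $r$ facets, so that "exactly $r$ vanishing forms" is an honest equivalence and not merely an inequality; for a general (non-simplex) Newton polytope this step would fail.
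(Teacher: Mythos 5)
Your route coincides with the paper's own (sketched) proof: combine Theorem \ref{thm13} (weight filtration via the ideals $I^{(\ell)}_\Delta$, Hodge filtration via (\ref{Rfni}), (\ref{GrE})), Corollary \ref{cor32} (facets of the simplex $\Delta(f)$ cut out by the forms ${\mathcal L}_q$) and Proposition \ref{prop31}, and then read off the poles of the $\Gamma$-factors. Your part 2 and the pole bookkeeping (simple pole of each vanishing factor $\Gamma({\mathcal L}_{q_j}(\bI,z,k))$ because $B_{q_j}>0$, no pole or zero from $1/\Gamma(1-z)$, hence a pole of order exactly $r$ at $z=0$) are exactly the intended argument. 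A small slip: the chain $I^{(1)}_\Delta\subset I^{(2)}_\Delta\subset\cdots$ is increasing, so your parenthetical in part 1 (``in $I^{(1)}_\Delta$ but in no deeper ideal $I^{(\ell)}_\Delta$, $\ell\ge 2$'') is backwards --- membership in $I^{(1)}_\Delta$ implies membership in all larger $I^{(\ell)}_\Delta$; your part 2 phrasing (``in $I^{(r+1)}_\Delta$ but not in $I^{(r)}_\Delta$'') is the correct one, and part 1 only uses membership in $I^{(1)}_\Delta$, so this does not damage the argument.

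There is, however, a genuine gap in part 1: your derivation of the upper bound ${\mathcal L}_q(\bI,0,k)<1$. The forms ${\mathcal L}_q$ are linear in $(\bI,z,k)$ with no constant term, so ``homogenizing by $k$'' the description of Corollary \ref{cor32} only gives $0<{\mathcal L}_q(\bI,0,k)<k$; indeed by (\ref{Wqvq}) one has $\sum_{q=1}^{n+1}{\mathcal L}_q(\bI,0,k)=k$, so for $k\ge 2$ interiority of $\bI/k$ cannot force each term below $1$. Concretely, $u^{k}x^{k\alpha(n+2)}$ is an interior monomial of $S_\Delta$ for every $k$, yet already in Example \ref{example345} the monomial $u^{3}x^{3\alpha(4)}$ has ${\mathcal L}_1=15/12>1$; of course it is not an admissible representative for the theorem, because it reduces in $R_f$ to a monomial of lower $u$-degree (cf.\ the figure there), and this is precisely the input your argument never uses. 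The hypothesis $\rho^+(u^kx^\bI)\in Gr_F^{n-k}$, i.e.\ that the class survives in $R_f^{k}$ via (\ref{Rfni}), (\ref{GrE}), has to be played off against the reduction relations in $R_f^+$ (Lemma \ref{lemma62}; cf.\ the descending argument in the proof of Proposition \ref{prop61}, 2)), or equivalently one must normalize the representative to have support in the fundamental parallelepiped $\Pi_{\Delta(f)}$ (cf.\ Corollary \ref{SDL}): for instance, if ${\mathcal L}_{n+1}(\bI,0,k)\ge 1$ then $u^{k-1}x^{\bI}\in S_\Delta$ and the relation coming from ${\mathcal D}_u S_\Delta$ lowers the $u$-degree of the class, contradicting the assumed Hodge level, and a similar reduction excludes ${\mathcal L}_q(\bI,0,k)\ge 1$ for $q\in[1;n]$. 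Without this step the inequality $<1$ (which the paper uses again in \S\ref{HGgroup}) is unproved; with it, the remainder of your part 1 --- strict positivity pushing all poles of $\Gamma({\mathcal L}_q(\bI,z,k))$, $q\in[1;n+1]$, into $\frac{\gamma}{B_q}\bigl({\mathcal L}_q(\bI,0,k)+\bZ_{\ge0}\bigr)\subset\bR_{>0}$, the negative-axis poles coming only from the factor $\Gamma(z)$ in (\ref{MukxIg}), hence holomorphy at $s=0$ --- and your part 2 go through.
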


Proof of the theorem can be achieved by a combination of Theorem~\ref{thm13}
and the Proposition~\ref{prop31}, Corollary ~\ref{cor32}. We remember here that
the $\Gamma(z)$
has simple poles at $z=0,-1, -2, \cdots.$

\vspace{0.5cm}
We shall compare our result of Theorem \ref{thm51}, 1)  with  Kashiwara-Malgrange filtration on the Gauss-Manin system defined by Douai-Sabbah \cite[Theorem 4.5]{DS}. 

First of all, we remark the isomorphism between $S_{\Delta(f)}$  defined for (\ref{fpoly}) and $S_{\Delta(f_0)}$
for (\ref{f0}). In \cite{DS}, the authors studied the Gauss-Manin system associated to a Laurent polynomial whose Newton polyhedron contains the origin as an interior point. To adapt the situation to \cite{DS}, we need to make a transition from $f$ to $f_0.$ 
\begin{equation}
 S_{\Delta(f)} \rightarrow  S_{\Delta(f_0)}, \;\;\; u^kx^\alpha \mapsto u^k x^{\alpha - k \alpha(n+2)}.
\label{isopi}
\end{equation}
The support of   $S_{\Delta(f)}$ (resp.  $S_{\Delta(f_0)}$)  is contained in a cone ${\sf C}\Delta(f):=\sum_{j=1}^{n+1} \bR_{\geq 0}(1, \alpha(j))$  (resp.${\sf C}\Delta(f_0):=\sum_{j=1}^{n+1} \bR_{\geq 0}(1, \alpha_0(j))$ where
$ \alpha_0(j):=\alpha(j)- \alpha(n+2), j \in [1;n+1]$). 
We shall use the notation motivated by the isomorphism (\ref{isopi})
 $$ \pi: (k, \alpha) \mapsto (k, \tilde \pi (\alpha)),$$
with $$ \tilde \pi (\alpha) =  \alpha - k \alpha(n+2) = \alpha  + [ {\mathcal L}_{n+1}(\alpha,0,0)] \alpha(n+2) .$$
Here $[\rho]$ means the maximal integer smaller than $\rho.$ It is worth noticing that $$ u^kx^\alpha \in S_{\Delta(f)} \Leftrightarrow k = -  [ {\mathcal L}_{n+1}(\alpha,0,0)] $$
while for $u^kx^\alpha \in S_{\Delta(f_0)}$ the book keeping index $k$ cannot be recovered from $\alpha$.
The isomorphism (\ref{isopi}) induces an isomorphism between quotient spaces $R_f$
and $R_{f_0}$ defined in (\ref{Rf}). We recall here that  $R_{f_0}$  is obtained from $S_{\Delta(f_0)}$, by taking into account the equivalence relations,
$$ uf_0 \equiv 0,\;\;  (d_j(k,\alpha',f_0) + u x^{\alpha_0(j)}) u^kx^{\alpha' }\equiv 0$$ 
with   $d_j(k,\alpha',f_0)  \in \bQ \setminus \{0\},$ $u^kx^{\alpha'} \in  S_{\Delta(f_0)}$. 
Compare with Lemma \ref{lemma62}.

For the fundamental parallelepiped 
$$  \Pi_{\Delta(f_0)}  =\{\sum_{j=1}^{n+1} t_j \pi (1, \alpha(j)); 0 \leq t_j < 1, j \in [1;n+1]  \}$$
we denote by $Rep(R^+_{f_0})$ the representative polynomials of $R^+_{f_0}$ whose support is located in 
$\Pi_{\Delta(f_0)}.$ It is known that the generating function of $Rep(R^+_{f_0})$ is given by the Ehrhart polynomial $\Psi_{\Delta(f_0)}(t)=$ $\Psi_{\Delta(f)}(t)$ from Definition \ref{Ehrhart}.

Now we introduce the grading on $S_{\Delta(f)}$ as follows
\begin{equation}
{\tilde L}_q (k,\alpha) = \frac{\gamma}{B_q}  {\mathcal L}_{q}(\alpha,0,k)= \frac{<v_q, \alpha> + k \gamma \delta_{n+1,q} }{B_q}
\label{Ltilde}
\end{equation}
that is associated to the poles of positive direction (\ref{PosPoles}).

The grading on  $S_{\Delta(f_0)}$ under the guise of that in \cite[4.a]{DS} can be defined as follows. Let
$$ \Delta_q (f_0)= \bigl< \alpha_0(1), \lvup, \alpha_0(n+1) \bigr>, \;\; q \in [1;n+1]$$
be a $(n-1)$ dimensional simplex face of $\Delta(f_0).$ The $n$ dimensional cone 
 $${\sf C}\Delta_q (f_0):=\bigcup_{\tilde \alpha \in \Delta_q (f_0)} \bR_{\geq 0}(1, \tilde \alpha) $$
is obtained as its coning with the apex at the  origin. Let $L_q(k, \tilde \alpha)$ be a linear function satisfying the following conditions:
$$ L_q(k, \tilde \alpha) =0,  \forall  (k, \tilde \alpha)  \in  {\sf C}\Delta_q,\;\;  L_q(k, 0) =-k,   \forall k \in [1;n]$$
in such a way that
$$  {\sf C}\Delta(f_0)  = \bigcap_{q \in [1;n+1]} \{(r, \beta) \in \bR_{\geq 0} \times \bR^n; L_q(r, \beta) \leq 0\}. $$
Under this situation we have
\begin{lem}
For every $(k, \alpha ) \in \bigcup_{\ell \in \bZ_{\geq 0} } (\ell, \ell \Delta(f)) \subset {\sf C}\Delta(f),$
we have the equality,
$$ L_q( \pi(k,\alpha)) + {\tilde L}_q (k, \alpha)=0, \;\; \forall q \in [1, n+1].$$
\label{LqtildeLq}
\end{lem}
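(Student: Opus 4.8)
The plan is to unwind both gradings to explicit linear-algebraic formulas and then observe they are negatives of each other. First I would write down $\tilde L_q$ explicitly: by definition $\tilde L_q(k,\alpha) = \frac{\gamma}{B_q}\mathcal L_q(\alpha,0,k) = \frac{\langle v_q,\alpha\rangle + k\gamma\,\delta_{n+1,q}}{B_q}$, using the description of the column vectors $w_q = (v_q,-B_q,C_q)/\gamma$ of $\sf L^{-1}$ from Proposition~\ref{prop31}, where $C_q = \gamma\,\delta_{n+1,q}$ for $q\in[1;n+1]$. Next I would identify $L_q(r,\beta)$: it is the unique linear functional on $\bR_{\geq 0}\times\bR^n$ vanishing on the cone ${\sf C}\Delta_q(f_0) = \bigcup_{\tilde\alpha\in\Delta_q(f_0)}\bR_{\geq 0}(1,\tilde\alpha)$ and normalized by $L_q(k,0) = -k$; since $\Delta_q(f_0)$ is spanned by $\{\alpha_0(j)\}_{j\neq q}$ with $\alpha_0(j) = \alpha(j)-\alpha(n+2)$, this pins $L_q$ down completely.

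The key computation is to relate $L_q\circ\pi$ to the vectors $v_q$. Recall $\pi(k,\alpha) = (k,\tilde\pi(\alpha))$ with $\tilde\pi(\alpha) = \alpha - k\alpha(n+2)$ (valid because $k = -[\mathcal L_{n+1}(\alpha,0,0)]$ on ${\sf C}\Delta(f)$, and on the sublattice $\bigcup_\ell(\ell,\ell\Delta(f))$ this floor is exact, i.e.\ $\mathcal L_{n+1}(\alpha,0,k) = \mathcal L_{n+1}(\alpha,0,0)+k$ is a nonnegative integer — this is where the hypothesis $(k,\alpha)\in\bigcup_\ell(\ell,\ell\Delta(f))$ is used). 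I would then check that the functional $(k,\alpha)\mapsto \tilde L_q(k,\alpha)$, when precomposed with $\pi^{-1}$, satisfies exactly the two defining conditions of $-L_q$: it vanishes on ${\sf C}\Delta_q(f_0)$ and sends $(k,0)\mapsto k$. For the first, a point of ${\sf C}\Delta_q(f_0)$ pulls back under $\pi^{-1}$ to a point of ${\sf C}\Delta_q(f)$ (the face of ${\sf C}\Delta(f)$ not involving the vertex $(1,\alpha(q))$), and by Corollary~\ref{cor32} together with Proposition~\ref{prop31} the functional $\mathcal L_q$ — hence $\tilde L_q$ — vanishes precisely on that face. For the normalization, tracking $(k,0)$ backward through $\pi$ gives $(k,-k\alpha(n+2))$... more carefully, $\pi^{-1}(k,0) = (k, k\alpha(n+2))$, and evaluating $\tilde L_q$ there against $\langle v_q,\alpha(n+2)\rangle$ using $\sum v_q = 0$ and the relation ${\sf L}^{-1}{\sf L} = I$ yields the required value. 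By uniqueness of the linear functional with these two properties, $\tilde L_q\circ\pi^{-1} = -L_q$, i.e.\ $L_q(\pi(k,\alpha)) + \tilde L_q(k,\alpha) = 0$.

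The main obstacle I anticipate is the bookkeeping around $\alpha(n+2)$: one must verify that $\langle v_q,\alpha(n+2)\rangle$ and $\gamma\delta_{n+1,q}$ interact correctly so that the shift $\alpha\mapsto\alpha-k\alpha(n+2)$ produces exactly the $+k\gamma\delta_{n+1,q}$ term in $\tilde L_q$ and the $-k$ normalization in $L_q$ simultaneously. This reduces to reading off the $(n+2)$-nd coordinate behaviour of $\sf L^{-1}$, for which the identities $\sum_{q=1}^{n+1} w_q = (0,-1,1)$ and $\sum_{q=1}^{n+1} v_q = 0$ from \eqref{Wqvq} are exactly what is needed; the rest is Cramer's rule applied to the minors $B_q$ of $\sf L$. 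I would also double-check the edge case $q = n+1$ separately, since that is the unique index with $C_q\neq 0$, to confirm the $\delta_{n+1,q}$ term is accounted for on both sides.
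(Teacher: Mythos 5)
Your proof is correct and is essentially the paper's own argument in different packaging: the paper substitutes the explicit formula for $L_q$ (namely $L_q(r,\beta)=-\frac{\langle v_q,\beta\rangle}{B_q}-r$ for $q\le n$, and the analogous expression for $q=n+1$ via $\sum_{q=1}^{n+1}v_q=0$) and cancels, while you verify that $-\tilde L_q\circ\pi^{-1}$ satisfies the two defining properties of $L_q$ and invoke uniqueness; both hinge on exactly the same identities $\langle v_q,\alpha(n+2)\rangle=B_q$ for $q\in[1;n]$ and $\langle v_{n+1},\alpha(n+2)\rangle=B_{n+1}-\gamma$, and on the vanishing of $\mathcal L_q$ on the face spanned by $(1,\alpha(j))$, $j\neq q$. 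One small caveat: your parenthetical claim that $\mathcal L_{n+1}(\alpha,0,k)$ is a nonnegative integer on $\bigcup_{\ell}(\ell,\ell\Delta(f))$ is false (it is in general a rational number in $[0,k]$), but this remark is never used — the argument only needs the linear map $(k,\alpha)\mapsto(k,\alpha-k\alpha(n+2))$ and the values on the spanning set, so the slip is harmless.
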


\begin{proof}
For $q \in [1;n],$ the equality below is valid,
$$  L_q( \pi(k,\alpha)) =  L_q(k,\alpha - k \alpha(n+2)) = - \frac{<v_q, \alpha -   k \alpha(n+2) >}{B_q} -k
=  - \frac{<v_q, \alpha  >}{B_q} $$
as $ <v_q,\alpha(n+2) > = B_q$  by (\ref{Bq}).

For $q = n+1$,
 $$L_{n+1}( \pi(k,\alpha)) + {\tilde L}_{n+1} (k, \alpha) = - \frac{<v_{n+1}, \alpha> -   k <v_{n+1},\alpha(n+2) >}{B_{n+1}} -k +  \frac{<v_{n+1}, \alpha  > + k \gamma}{B_{n+1}}.$$ 
We recall that $ v_{n+1}= - \sum_{q=1}^n v_q$ by (\ref{Wqvq}) and see that 
$-<v_{n+1}, \alpha(n+2)> =\gamma -  B_{n+1}.$
\end{proof}

In order to introduce  a filtration defined in terms of Mellin transforms, first we remark that due to (\ref{HDR}), for every $h \in S_{\Delta(f)},$ the following decomposition holds
\begin{equation}
 I_{h, \t \delta} (s) = \sum_{u^\ell x^\alpha \in Rep(R^+_f)} \tilde h_{\ell, \alpha} (s)I_{u^\ell x^\alpha, \t \delta} (s),
\label{Ihdelta}
\end{equation}
 with $ \tilde h_{\ell, \alpha} (s) \in \bC[s].$
Here  $Rep(R^+_{f})$ denotes  representative polynomials of $R^+_{f}$ whose support is located in 
the fundamental parallelepiped $\Pi_{\Delta(f)}$ defined by the relation $\pi(\Pi_{\Delta(f)}) = \Pi_{\Delta(f_0)}$
for the isomorphism $\pi$ arising from \eqref{isopi}.

We introduce a filtration  $\cM_{\beta}$ with $\beta \in \bQ$ on  (\ref{HDR})  with the aid of the notion of poles of positive direction analogous to (\ref{PosPoles}) as follows 
\begin{equation}
\cM_{\beta}:= \{I_{h, \t \delta} (s) ; {\rm minimum\;of\; poles \; of \; positive\; direction\; of} \;  M_{h, \t \delta} (z) \geq \beta\}.
\label{Mbeta}
\end{equation}
In a similar manner we introduce
\begin{equation}
\cM_{>\beta}:= \{I_{h, \t \delta} (s) ; {\rm minimum\;of\; poles \; of \; positive\; direction\; of} \;  M_{h, \t \delta} (z) > \beta\}.
\label{Mbeta1}
\end{equation}
The non-trivial filtration $\cM_{\beta} \not = \emptyset$ means that $\beta \leq \beta_0:= max_{q \in [1;n+1]} \frac{\gamma}{B_q}.$ This filtration is decreasing i.e. $\beta _1 \leq \beta_2 \leq \beta_0 \Rightarrow \cM_{\beta_1} \supset \cM_{\beta_2}.$

 Now we introduce the following rational number defined for $\; h \in S_{\Delta(f)}.$
\begin{equation}
\beta(h):= min_{q \in [1;n+1]} min_{(\ell, \alpha) \in \Pi_{\Delta(f)}, \tilde h_{\ell, \alpha} \not \equiv 0} 
\left( \tilde L_q(\ell, \alpha) - deg \; \tilde h_{\ell, \alpha}\right )
\label{betah}
\end{equation}
Here the notation is the same as in (\ref{Ihdelta}).
From the definition of the Mellin transform (\ref{MukxI}) and the grading 
(\ref{Ltilde}), we see that 
the inequality $\beta(h) \geq \beta$ entails  $I_{h, \t \delta} (s) \in \cM_{\beta}$ for every $\delta(s) \in H_{n-1}(Z_{f}).$ 

In view of Corollary \ref{IukxI}, Theorem \ref{thm51}, we get a result analogous to \cite[Theorem 4.5, Lemma 4.11]{DS}.
\begin{proposition}
The filtration (\ref{Mbeta}), (\ref{Mbeta1})  satisfies the following three properties.

1) We define a positive integer $r(\beta)$ for $\beta \leq \beta_0$
by $$ r(\beta) = max_{u^\ell x^\alpha \in Rep(R_f^+)}  r(\ell, \alpha; \beta), $$
where
$$  r(\ell, \alpha; \beta) =  \sharp \{q \in [1; n+1];  \frac{\gamma}{B_q} {\mathcal L}_q(\alpha,0,\ell)  = \beta \; {\rm for }\; \beta \in [0, \frac{\gamma}{B_q}]  \} .$$
With this notation, we have
$$ (\vartheta_s+\beta)^{r(\beta)} \cM_\beta \subset \cM_{>\beta }.$$
In other words, the action of $\vartheta_s+\beta$ on $\cM_\beta/ \cM_{>\beta}$  is  nilpotent for every $\beta \leq \beta_0$.
We remark also that  $ r(\beta) =  r(\beta-m)$ for $m \in \bZ_{\geq 0}.$

2)   $ \partial_s  \cM_\beta \subset \cM_{\beta+1 }$ for $ \beta \leq \beta_0-1. $

3)   $  s  \cM_\beta \subset \cM_{\beta-1 }$  for $ \beta \leq \beta_0. $
$L_C(J_{h, \Gamma})(s)$
\label{Mfiltration}
\end{proposition}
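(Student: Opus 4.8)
The plan is to reduce all three assertions to bookkeeping on the Mellin transform via the grading $\tilde L_q$ in (\ref{Ltilde}) and the explicit form of $M_{u^kx^{\bI},\tilde\Gamma}$ in (\ref{MukxIg}). The key observation, which I would isolate first, is the following dictionary: for a basis element $u^\ell x^\alpha\in Rep(R_f^+)$ the poles of positive direction of $M_{u^\ell x^\alpha,\tilde\delta}(z)$ come from the $\Gamma$-factors $\Gamma(\mathcal{L}_q(\alpha,z,\ell))$, $q\in[1;n+1]$, and the smallest such pole is exactly $\min_q \tilde L_q(\ell,\alpha)=\tfrac{\gamma}{B_q}\mathcal{L}_q(\alpha,0,\ell)$, by Proposition~\ref{prop31}(2) and the sign of $B_q$. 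More generally, multiplying by $s^m$ shifts $z\mapsto z-m$ in the Mellin transform, i.e. shifts the pole set by $+m$, while $\partial_s$ (equivalently $\vartheta_s$) acts on Mellin transforms by multiplication by $-z$; these two facts, together with (\ref{Ihdelta}), let me track $\beta(h)$ under the operations $s$, $\partial_s$ and $\vartheta_s+\beta$.

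For part 2): if $I_{h,\tilde\delta}(s)\in\cM_\beta$ then by (\ref{Ihdelta}) every $\tilde h_{\ell,\alpha}(s)I_{u^\ell x^\alpha,\tilde\delta}(s)$ has minimal positive-direction pole $\geq\beta$, i.e. $\tilde L_q(\ell,\alpha)-\deg\tilde h_{\ell,\alpha}\geq\beta$ for all relevant $q$. Applying $\partial_s$ either lowers $\deg\tilde h_{\ell,\alpha}$ by one (raising $\beta(h)$ by one on that term) or, through the relation $\partial_s I_{u^\ell x^\alpha,\tilde\delta}=-\,\ell\,I_{u^{\ell+1}x^{\alpha+\alpha(n+2)},\tilde\delta}+\cdots$ coming from the Jacobi relations in $R_f^+$, replaces a representative $u^\ell x^\alpha$ by one whose $\tilde L_q$-grading is larger by exactly $1$ (since $\tilde L_q$ is linear and $\tilde L_q(1,\alpha(n+2))=\tfrac{\gamma}{B_q}\mathcal{L}_q(\alpha(n+2),0,1)=1$ for all $q$ by (\ref{Bq}) and $\langle v_q,\alpha(n+2)\rangle=B_q$). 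Either way $\beta(\partial_s h)\geq\beta+1$, giving $\partial_s\cM_\beta\subset\cM_{\beta+1}$, valid as long as $\beta+1\leq\beta_0$, i.e. $\beta\leq\beta_0-1$. Part 3) is the mirror computation: multiplication by $s$ sends a term $\tilde h_{\ell,\alpha}(s)I_{u^\ell x^\alpha,\tilde\delta}(s)$ to $s\tilde h_{\ell,\alpha}(s)I_{u^\ell x^\alpha,\tilde\delta}(s)$, raising $\deg\tilde h_{\ell,\alpha}$ by one and hence lowering $\beta$ by one, so $s\cM_\beta\subset\cM_{\beta-1}$ with no constraint beyond $\beta\leq\beta_0$; one must also check $s I_{u^\ell x^\alpha,\tilde\delta}$ stays in the span of $Rep(R_f^+)$-integrals, which follows from (\ref{Ihdelta}) applied to $h'=x^{\alpha(n+2)}u^{-1}\cdot(u^\ell x^\alpha)$ after clearing with the relation $uf\equiv0$.

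For part 1), the nilpotency statement, I would argue directly on the Mellin side. For $I_{h,\tilde\delta}(s)\in\cM_\beta$ the Mellin transform $M_{h,\tilde\delta}(z)$ is, by (\ref{MukxIg})–(\ref{MellinGamma}) and (\ref{Ihdelta}), a $\bC[s]$-combination (turning into $z$-shifts) of products $\prod_{q=1}^{n+1}\Gamma(\mathcal{L}_q(\alpha,z,\ell))/\Gamma(1-z)$, and the value $\beta$ is attained as a pole precisely along those factors $q$ with $\tilde L_q(\ell,\alpha)=\beta$ (up to an integer shift absorbed by $\deg\tilde h_{\ell,\alpha}$); the number of such $q$ for a fixed representative is $r(\ell,\alpha;\beta)$, so the order of the pole at $z=\beta$ is at most $r(\beta)=\max_{u^\ell x^\alpha}r(\ell,\alpha;\beta)$. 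Since the residue of a function with a pole of order $\leq r(\beta)$ at $z=\beta$ is killed by $(\vartheta_s+\beta)^{r(\beta)}$ — because $\vartheta_s$ corresponds to $-z$ under Mellin–Barnes and each application of $-(z-\beta)$ on the inverse-transform contour (Corollary following Proposition~\ref{prop31}) lowers the pole order by one — we obtain $(\vartheta_s+\beta)^{r(\beta)}\cM_\beta\subset\cM_{>\beta}$. The periodicity $r(\beta)=r(\beta-m)$ for $m\in\bZ_{\geq0}$ is immediate from $\mathcal{L}_q(\alpha,0,\ell)-\tfrac{B_q}{\gamma}m=\mathcal{L}_q(\alpha-m\,(\text{integral shift}),0,\ell)$, i.e. the positive-direction pole set is invariant under integral translation. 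The main obstacle I anticipate is making the passage from ``pole of order $\leq r(\beta)$ at $z=\beta$'' to ``annihilated by $(\vartheta_s+\beta)^{r(\beta)}$'' fully rigorous: one must control the whole Mellin–Barnes contour $\check\Pi$ and the periodic factor $g(z)$, verifying that $(\vartheta_s+\beta)^{r(\beta)}$ applied under the integral sign indeed removes the relevant pole without creating new positive-direction poles below $\beta$, and that the decomposition (\ref{Ihdelta}) genuinely separates the contributions of distinct representatives $u^\ell x^\alpha$ at the level of pole orders (which uses $\bQ$-linear independence of the $\tilde L_q$-gradings modulo $\bZ$, a consequence of simpliciability, $\gamma\neq0$).
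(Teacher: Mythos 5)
Your overall strategy is the one the paper itself intends: it gives no written-out proof of Proposition \ref{Mfiltration}, merely deriving it ``in view of'' the Mellin--Barnes representation and Theorem \ref{thm51}, and your pole bookkeeping under $s$, $\partial_s$ and $\vartheta_s+\beta$ is exactly that route. However, two steps as you wrote them do not stand. First, your dictionary sentence has the wrong sign: with the normalization (\ref{MukxI}), multiplying $I_{h,\t\delta}$ by $s^m$ replaces $M(z)$ by $M(z+m)$, so the positive-direction poles move \emph{down} by $m$, not up; your conclusion in part 3 tacitly uses the correct direction, but the stated dictionary contradicts it. Second, and more seriously, in parts 2 and 3 you infer from $I_{h,\t\delta}\in\cM_\beta$ that \emph{each} summand $\tilde h_{\ell,\alpha}(s)I_{u^\ell x^\alpha,\t\delta}(s)$ in (\ref{Ihdelta}) has minimal positive-direction pole $\geq\beta$, i.e. that $\beta(h)\geq\beta$. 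That implication is false in general: the definition (\ref{Mbeta}) constrains only the poles of the total Mellin transform, and poles of individual summands may cancel, so membership in $\cM_\beta$ does not yield the term-wise inequalities of (\ref{betah}); the paper asserts only the converse implication. The gap is easily closed by dropping the decomposition altogether and applying the dictionary directly to $M_{h,\t\delta}$: multiplication by $s$ gives $M(z+1)$, $\vartheta_s$ gives $-zM(z)$, and $\partial_s$ gives $-(z-1)M(z-1)$ (boundary terms vanish on $\Pi$), from which 2) and 3) follow at once from (\ref{Mbeta}), (\ref{Mbeta1}).

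For part 1 your argument is essentially right, but the ``main obstacle'' you single out --- separating the contributions of distinct representatives --- is a red herring, and your proposed cure (injectivity of the gradings modulo $\bZ$) quietly imports the hypothesis (\ref{gcdB}), which is introduced only in \S~\ref{HGgroup} and is not assumed here. No separation is needed: the order of a pole of a finite sum at $z=\beta$ is at most the maximum of the orders of the summands; each summand is an integer shift of (\ref{MukxIg}), whose pole order at $z=\beta$ is bounded by $r(\ell,\alpha;\beta)$ because $g_{\tilde\Gamma}(z)$ is entire and each $\Gamma$-factor has only simple poles; hence $r(\beta)$ bounds the pole order of $M_{h,\t\delta}$ at $z=\beta$, multiplication by $(\beta-z)^{r(\beta)}$ removes it without creating new poles, and since $I_{h,\t\delta}\in\cM_\beta$ excludes positive poles below $\beta$, the image lies in $\cM_{>\beta}$. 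Two small corrections to your side computations: with the factorial normalization in (\ref{IukxI}) the Gauss--Manin relation reads $\partial_s I_{u^\ell x^\alpha,\t\delta}=-I_{u^{\ell+1}x^{\alpha+\alpha(n+2)},\t\delta}$ (coefficient $-1$, not $-\ell$), and $\langle v_q,\alpha(n+2)\rangle=B_q$ holds only for $q\in[1;n]$, while $\langle v_{n+1},\alpha(n+2)\rangle=B_{n+1}-\gamma$, the discrepancy being compensated by the $+k$ term in $\mathcal{L}_{n+1}$; the increment of every grading under $(\ell,\alpha)\mapsto(\ell+1,\alpha+\alpha(n+2))$ is still $1$, which is what you actually need.
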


In \cite[Theorem 4.5]{DS} the Kashiwara-Malgrange filtration is defined on 
the ring $R_f^+$ \eqref{RF+} for $f:$ $\Delta-$regular Laurent polynomial. As it has been remarked in \eqref{JgGamma}, in our setting \eqref{RF+} represents a $\bC[u]$ module of  finite type generated by oscillating integrals. On the other hand, \eqref{Mbeta} is defined for the period integrals associated to cycles of an affine variety $Z_{-uf+1}.$ The oscillating integrals \eqref{JgGamma} are taken along
Lefschetz thimbles that belong to a $\bZ$ module with rank $=\gamma.$ Thus we can state metaphorically that the Kashiwara-Malgrange in  \cite[Theorem 4.5]{DS} is defined for the Laplace transform of our filtration \eqref{Mbeta}. In our previous article \cite[\S 5]{TaU13}, we have paid attention to this correspondence between oscillating integrals and period integrals for an affine variety. 
{
\center{\section{
Hypergeometric group associated to the period integrals
}\label{HGgroup}}
}

For a monomial reperesentative $u^kx^{\bI} \in R_f^+,$ 
we introduce two differential operators of order
$\gamma= n!vol_{n}(\Delta(f))=|\chi(Z_{f})|$ with the aid of the Pochhammer symbol (\ref{pochhammer});
\begin{equation}
P(\vartheta_s)= \prod_{j=0}^{\gamma-1}
\bigl(-\vartheta_{s}\bigr)_{\gamma}
\label{PkI}
\end{equation}
\begin{equation}
Q_{k, \bI}( \vartheta_s)
= (-1)^\gamma \prod_{q=1}^{n+1}\prod_{j=0}^{B_{q} -1}
\bigl({\mathcal L}_{q}(\bI,-\vartheta_{s},k ) \bigr)_{B_q} . 
\label{QkI}
\end{equation}
with $\vartheta_s =s \frac{\partial }{\partial s}.$
 We 
have the following theorem as a corollary to the 
Proposition ~\ref{prop31} and Corollary ~\ref{cor32}.

\begin{thm}
The period integral
$I_{u^kx^{\bI},\t \delta} (s)$ is annihilated by the differential operator 
\begin{equation}
R_{(k, \bI)} (s, \vartheta_s)= P(\vartheta_s) -
s^{\gamma}Q_{k,\bI}(\vartheta_s), 
\label{RkI}
\end{equation}
with regular singularities at
\begin{equation}
 \{s \in \mathP^1; (\prod_{q=1}^nB_q^{B_q})  (\frac{s}{\gamma})^\gamma=1, s= 0, \infty\}.
\label{Sloci}
\end{equation}
  In other words,
\begin{equation}
[P(\vartheta_s) -
s^{\gamma}Q_{k,\bI}( \vartheta_s)]I_{u^kx^{\bI},\t \delta} (s)=0. 
\end{equation}
\end{thm}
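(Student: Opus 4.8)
The plan is to derive the annihilating operator $R_{(k,\bI)}(s,\vartheta_s)$ directly from the Mellin transform computed in Proposition~\ref{prop31}, using the elementary fact that multiplication of a Mellin transform by a polynomial in $z$ corresponds, on the original function, to applying the same polynomial in $-\vartheta_s$, and that multiplication by $s^{\gamma}$ corresponds to the shift $z \mapsto z-\gamma$ under the Mellin transform. Concretely, by Corollary~\ref{IukxI} the period integral has the Mellin--Barnes representation
\[
I_{u^kx^{\bI},\t\delta}(s)=\int_{\check\Pi} g(z)\,\Gamma(z)\prod_{q=1}^{n+1}\Gamma\bigl(\mathcal L_q(\bI,z,k)\bigr)\,s^{-z}\,dz,
\]
so it suffices to find a functional equation in $z$ for the integrand kernel $\Phi(z):=g(z)\,\Gamma(z)\prod_{q=1}^{n+1}\Gamma(\mathcal L_q(\bI,z,k))$ of the shape $P(-z)\,\Phi(z)=Q_{k,\bI}(-z)\,\Phi(z+\gamma)$ up to the periodic factor $g$, and then translate it into the differential relation by the standard contour-shift argument.

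The key steps, in order. First I would record the two basic dictionary rules: for the kernel $\Phi(z)s^{-z}$, the operator $\vartheta_s$ acts as multiplication by $-z$ (since $\vartheta_s s^{-z}=-z\,s^{-z}$), while $s^{\gamma}$ acting on $s^{-z}$ gives $s^{-(z-\gamma)}$, i.e.\ under the integral it amounts to replacing $z$ by $z+\gamma$ in $\Phi$ after a contour shift (which is legitimate because the kernel decays exponentially on $\check\Pi$ by the Stirling estimate used in Corollary~\ref{IukxI} and the integrand has poles only at the tabulated points). Second, I would compute the ratio $\Phi(z+\gamma)/\Phi(z)$ using the recursion $\Gamma(w+1)=w\,\Gamma(w)$: since $\mathcal L_q(\bI,z,k)$ is affine in $z$ with the $z$-coefficient $-B_q/\gamma$ (for $q\in[1;n+1]$, by \eqref{Lq}), shifting $z\mapsto z+\gamma$ shifts $\mathcal L_q$ by $-B_q$, so
\[
\frac{\Gamma(\mathcal L_q(\bI,z+\gamma,k))}{\Gamma(\mathcal L_q(\bI,z,k))}
=\prod_{j=1}^{B_q}\bigl(\mathcal L_q(\bI,z+\gamma,k)+j-1\bigr)^{-1}
=\Bigl[\bigl(\mathcal L_q(\bI,-(-z),k)-B_q\bigr)_{B_q}\Bigr]^{-1},
\]
and similarly $\Gamma(z+\gamma)/\Gamma(z)=\prod_{j=0}^{\gamma-1}(z+j)=(z)_\gamma$; matching signs and using $g(z+\gamma)=g(z)$ (the period of the chosen entire function is $\gamma$) yields exactly the identity
\[
(-z)_\gamma\,\Phi(z)=(-1)^{\gamma}\prod_{q=1}^{n+1}\bigl(\mathcal L_q(\bI,-(-z),k)\bigr)_{B_q}\,\Phi(z+\gamma),
\]
which is $P(-z)\,\Phi(z)=Q_{k,\bI}(-z)\,\Phi(z+\gamma)$ after rewriting $P$ and $Q_{k,\bI}$ as in \eqref{PkI}, \eqref{QkI} with $\vartheta_s\leftrightarrow -z$. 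Third, I would transfer this to the differential equation: apply $R_{(k,\bI)}(s,\vartheta_s)=P(\vartheta_s)-s^\gamma Q_{k,\bI}(\vartheta_s)$ under the integral sign, use the dictionary rules to pull it onto the kernel as $P(-z)\Phi(z)s^{-z}-Q_{k,\bI}(-(z-\gamma))\Phi(z-\gamma)s^{-z}$ after relabelling the shifted integral, and observe the integrand vanishes identically by the functional equation; the boundary terms from the contour shift vanish by the exponential decay, so $R_{(k,\bI)}I_{u^kx^{\bI},\t\delta}=0$. Finally, for the statement about regular singular points I would compute the symbol/Newton polygon of $R_{(k,\bI)}$: the leading $\vartheta_s^\gamma$ term comes from $P$, the $s^\gamma\vartheta_s^\gamma$ term from $Q_{k,\bI}$ with coefficient $(-1)^\gamma\prod_q (B_q/\gamma)^{B_q}$, so the indicial-type equation for the finite nonzero singularities reads $1-(-1)^{\gamma}s^{\gamma}(-1)^{\gamma}\gamma^{-\gamma}\prod_q B_q^{B_q}=0$, i.e.\ $(\prod_q B_q^{B_q})(s/\gamma)^\gamma=1$, together with $s=0$ and $s=\infty$; since $R_{(k,\bI)}$ is a polynomial in $\vartheta_s$ at $s=0$ and has the mirror property at $\infty$, these are all regular singular, giving \eqref{Sloci}.

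The main obstacle I anticipate is the bookkeeping around the periodic factor $g(z)$ and the legitimacy of the contour shift: one must make sure that the $g$ chosen in Corollary~\ref{IukxI} is genuinely $\gamma$-periodic (so it drops out of the ratio $\Phi(z+\gamma)/\Phi(z)$) and that shifting $\check\Pi$ by $\gamma$ sweeps across no poles other than those already accounted for, so that no residue terms spoil the identity $R_{(k,\bI)}I=0$. This is where the Stirling asymptotics and the explicit location of poles from Proposition~\ref{prop31}(2) and Corollary~\ref{cor32} are needed; once those are in hand, the remainder is the routine $\Gamma$-function recursion above and an elementary computation of the principal symbol for the singular-locus claim.
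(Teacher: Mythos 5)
Your route is exactly the paper's: the theorem is stated there as a direct corollary of Proposition \ref{prop31} and Corollary \ref{cor32}, i.e. the Mellin transform is a product of Gamma factors with arguments affine in $z$ of slopes $-B_q/\gamma$ (and $+1$ for $\Gamma(z)$), the recursion $\Gamma(w+B_q)=(w)_{B_q}\Gamma(w)$ gives the difference equation under $z\mapsto z+\gamma$, and Mellin inversion with the contour shift (justified by the Stirling decay and the $\gamma$-periodicity of $g$) converts it into the Pochhammer operator, with the locus (\ref{Sloci}) read off from the top-order coefficients exactly as you do. Only watch the bookkeeping in your second and third steps: the relabelled integral should carry $Q_{k,\bI}(-(z+\gamma))\Phi(z+\gamma)s^{-z}$ (not $z-\gamma$), and the functional equation is $(z)_\gamma\,\Phi(z)=\prod_{q=1}^{n+1}\bigl(\mathcal L_q(\bI,z+\gamma,k)\bigr)_{B_q}\,\Phi(z+\gamma)$ rather than the version you display — harmless slips of the same kind as the sign/shift discrepancies already present between (\ref{PkI})--(\ref{QkI}) and (\ref{R10}) in the paper itself.
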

It is worthy to remark that  the operator $R_{(k, \bI)}(s,\vartheta_s)$
is a pull-back of the Pochhammer hypergeometric operator of order
$\gamma$ for $\vartheta_t =t \frac{\partial }{\partial t},$
\begin{equation}
\tilde R_{(k, \bI)}(t, \vartheta_t) =P(\gamma \vartheta_t) - t Q_{k,\bI}
(\gamma \vartheta_t), 
\label{RkIt}
\end{equation}
by the Kummer covering  $t=s^{\gamma}.$
In certain cases, the monodromy representation of the kernel of the operator (\ref{RkIt})
turns out to be reducible (\cite[Proposition 2.7]{BH}). To extract the solution subspace with irreducible monodromy from the kernel of  (\ref{RkIt}), we introduce the following
$\gamma-$tuples  of rational numbers contained in $[0, 1)$.
$$ C^+=  \{0, \frac{1} {\gamma}, \cdots,
\frac{(\gamma -1)}{\gamma}\}, $$
\begin{equation}
 C^-(k, \bI)= \bigcup_{q=1}^{n+1}\bigcup_{0 \leq j \leq B_{ q}-1}  
 < \frac{1} {B_{q}} (j+1+ k \delta_{q, n+1} + \frac{< v_{q},\bI>  }{\gamma}) >,
\label{C10}
\end{equation}
where $\delta_{q, n+1} =1$ if $q =n+1$ and $\delta_{q, n+1}=0$ otherwise. For these multi-sets, we define
$$ C^0(k, \bI)= C^+ \cap C^-(k, \bI)$$
as a multi-set intersection (repetitive appearance of the same element for several times is accepted). 
Here we used the notation $  < \rho > = \rho - [\rho] $ that means the fractional part of $\rho \in\bQ .$
The symbol $[\rho]$ stands for the maximal integer smaller than $\rho.$

After {\bf Assumption}  in \S \ref{simplicial}, the Newton polyhedron of $f|_{s=0}$ is the same as that of  $f|_{s \not =0}$
and both of them are $\Delta(f)$ regular. In fact, $Z_f \subset \bT^n$ remains to be smooth for $s=0$ (a Delsarte hypersurface).
This argument justifies the following calculation of $R_f^+$ for the case $s=0.$

\begin{lem}
The denominator of $R_f^+$ in (\ref{RF+}) gives rise to equivalence relations as follows
$$ (c(k, j) x^{\alpha(j)} -1) u^{k+1} x^\beta \equiv 0, \; (b(k)u -1) u^kx^{\beta} \equiv 0$$
with $u^k x^\beta \in S_\Delta$ and some non-zero constants $b(k), \{ c(k,j) \}_{j=1}^n.$
\label{lemma62}
\end{lem}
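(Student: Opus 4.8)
The plan is to make explicit the relations defining $R_f^+$ in (\ref{RF+}), specialised to $s=0$, by writing out the operators $\mathcal D_u$ and $\mathcal D_{x_i}$ on monomials of $S_\Delta$. First I would recall that for $f=f_0+s$ one has $\mathcal D_u(g)=e^{uf}\theta_u(e^{-uf}g)=\theta_u g - (uf)g$ and $\mathcal D_{x_i}(g)=\theta_{x_i}g-(u\,\theta_{x_i}f)g$; with $f$ the Delsarte polynomial $f(x)=\sum_{j=1}^n x^{\alpha(j)}+1$ at $s=0$, the quantities $uf$ and $u\,\theta_{x_i}f$ are $\bC$-linear combinations of the monomials $ux^{\alpha(j)}$ ($j\in[1;n]$) and $u$. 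Applying $\mathcal D_u$ to $g=u^k x^\beta$ therefore produces $k\,u^kx^\beta$ minus a linear combination of $u^{k+1}x^{\beta+\alpha(j)}$ and $u^{k+1}x^\beta$; in $R_f^+$ this reads as an equivalence expressing $u^{k+1}x^\beta$ (times a nonzero constant coming from the coefficient of $u$ in $uf$, which is $1$) in terms of the $u^{k+1}x^{\beta+\alpha(j)}$ and the monomial $ku^kx^\beta$. Isolating the $u$-free term gives precisely a relation of the shape $(b(k,j)u-1)u^kx^\beta\equiv 0$ after reorganising; the constants $b(k,j)$ are ratios of the nonzero coefficients $a_i$ of $f$ and of $k$, hence nonzero (here one uses that $f$ has nonzero coefficients and that, by $\Delta(f)$-regularity together with the {\bf Assumption}, no degeneration forces a vanishing coefficient).

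Next I would treat the logarithmic derivatives $\mathcal D_{x_i}$. On $g=u^kx^\beta$ one gets $\mathcal D_{x_i}(u^kx^\beta)=\beta_i u^kx^\beta - u^{k+1}\sum_{j=1}^n \alpha^j_i\, x^{\beta+\alpha(j)}$. Taking suitable $\bC$-linear combinations over $i=1,\dots,n$ of these $n$ relations, and using that the matrix $(\alpha^j_i)$ has rank $n$ (this is exactly simpliciability, Definition \ref{dfn1}, which guarantees the vectors $\alpha(1),\dots,\alpha(n)$ are linearly independent), one can solve for each individual monomial $u^{k+1}x^{\beta+\alpha(j)}$ modulo $u^kx^\beta$. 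This yields, for each $j\in[1;n]$, an equivalence $(c(k,j)x^{\alpha(j)}-1)u^{k+1}x^\beta\equiv 0$ in $R_f^+$, where $c(k,j)$ is again built from the $a_i$ and the entries of the inverse of $(\alpha^j_i)$ together with shifts by components of $\beta$; invoking Cramer's rule and the nonvanishing $\gamma=\det{\sf L}\neq 0$ shows $c(k,j)\neq 0$. Throughout, one must check the bookkeeping that the monomials appearing stay inside $S_\Delta$ whenever $u^kx^\beta\in S_\Delta$; this is exactly Corollary \ref{SDL} applied to the shifted exponents, since adding $\alpha(j)$ to $x$ and $1$ to $u$ preserves the inequalities $0\le \mathcal L_q(\cdot,0,\cdot)\le 1$ on the relevant faces.

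Finally I would assemble the two families of relations into the statement. The first family comes from the $\mathcal D_{x_i}$ (giving the $(c(k,j)x^{\alpha(j)}-1)u^{k+1}x^\beta\equiv 0$) and the second from $\mathcal D_u$ (giving the $(b(k,j)u-1)u^kx^\beta\equiv 0$, where the residual dependence on $j$ simply records which monomial term of $f$ was used to rewrite); I would note that these relations in fact generate the full denominator ideal $\mathcal D_u S_\Delta+\sum_i \mathcal D_{x_i}S_\Delta$ because every generator $\mathcal D_u(u^kx^\beta)$, $\mathcal D_{x_i}(u^kx^\beta)$ is by the computation above a $\bC$-combination of the displayed ones (and conversely). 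I expect the main obstacle to be the nonvanishing of the constants $b(k,j),c(k,j)$ in degenerate configurations of $\beta$ and $k$: one has to rule out accidental cancellations when the integer shifts coming from $\beta$ or from $k$ collide with the coefficient ratios, and this is where the genericity built into the {\bf Assumption} and the hypothesis that $f$ is $\Delta(f)$-regular (Remark \ref{remark:nonsimpliciable}) are essential. A secondary technical point is to keep the $S_\Delta$-membership consistent so that the equivalences are genuinely relations in $R_f^+=S_\Delta^+/(\cdots)$ rather than in a larger localised ring.
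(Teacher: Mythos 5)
You have reconstructed the only route available here: the paper states this lemma without proof, and writing out $\mathcal D_u(u^kx^\beta)$ and $\mathcal D_{x_i}(u^kx^\beta)$ at $s=0$ and using that simpliciability makes the matrix $(\alpha^j_i)$ invertible is indeed the right computation; your two operator formulas are correct. But your bookkeeping of which family gives which relation is crossed. Solving the $n$ relations $\mathcal D_{x_i}(u^kx^\beta)\equiv 0$ by Cramer gives $u^{k+1}x^{\beta+\alpha(j)}\equiv \mathcal{L}_j(\beta,0,k)\,u^{k}x^{\beta}$ (the right-hand side carries $u^k$, not $u^{k+1}$, since the dual vector $v_j$ of the proof of Corollary \ref{cor32} gives $\langle v_j,\beta\rangle/\gamma=\mathcal{L}_j(\beta,0,k)$), and only after substituting these into the $\mathcal D_u$-relation and using (\ref{Wqvq}) does one get $u^{k+1}x^{\beta}\equiv \mathcal{L}_{n+1}(\beta,0,k)\,u^kx^\beta$. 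Each of the two displayed equivalences of the lemma is therefore a combination of \emph{both} families, with $1/b=\mathcal{L}_{n+1}(\beta,0,k)$ and $c=\mathcal{L}_{n+1}(\beta,0,k)/\mathcal{L}_j(\beta,0,k)$; in particular the constants depend on $\beta$, not only on $(k,j)$.

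The genuine gap is your nonvanishing argument. Cramer's rule together with $\gamma\neq 0$ guarantees solvability of the linear system, not that the solved coefficients are nonzero, and at $s=0$ all coefficients of $f$ equal $1$, so no "ratio of the $a_i$" enters: the constants are exactly the values $\mathcal{L}_j(\beta,0,k)$ and $\mathcal{L}_{n+1}(\beta,0,k)$, which vanish whenever $(k,\beta)$ lies over a proper face through the corresponding facet; neither $\Delta(f)$-regularity nor the genericity in the \textbf{Assumption} can prevent this, because the failure is combinatorial in the position of $\beta$. Concretely, in Example \ref{example345} take $u^kx^\beta=ux_1x_2$ and $\alpha(2)=(3,-1)$: then $\mathcal{L}_2((1,1),0,1)=0$, so $u^2x_1^4\equiv 0$, while $u^2x_1x_2\equiv\tfrac{2}{3}\,ux_1x_2$ and $ux_1x_2$ is one of the eleven listed representatives of $R_f^+$, hence a nonzero class; consequently no nonzero $c$ satisfies $(c\,x^{\alpha(2)}-1)u^{2}x_1x_2\equiv 0$. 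So the blanket nonvanishing you set out to prove cannot be established this way (taken literally for all $u^kx^\beta\in S_\Delta$ it fails). What your computation does prove, and what is actually used later (Lemma \ref{lemma61}, the descent along $u^kx^{k\alpha(n+2)}$ in the proof of Proposition \ref{prop61}), is the relation with constants $\mathcal{L}_j(\beta,0,k)$, $\mathcal{L}_{n+1}(\beta,0,k)$, which are nonzero precisely when these linear forms are nonzero, e.g. for monomials attached to the interior point $\alpha(n+2)$ or with image in $Gr^w_{n+1}$ by Theorem \ref{thm51}, 1). You should state the lemma with that proviso (or restrict the class of monomials) rather than appeal to genericity; also drop, or at least qualify, the converse claim that the two displayed families generate the whole denominator, since in the degenerate cases the denominator contains relations such as $u^{k+1}x^{\beta+\alpha(j)}\equiv 0$ that are not of the displayed form.
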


The following is a direct consequence of Lemma \ref{lemma62} in view of Proposition \ref{prop31} and Corollary  
\ref{cor32}.

\begin{lem}
The positive integer $\sharp | C^0(k,\bI)| $ is  well defined on the equivalence class of  $u^k x^ \bI \in R^+_f,$ i.e.
if $u^\ell x^{\bI'} \equiv u^k x^ \bI \in R^+_f$ then $\sharp | C^0(k,\bI)|  = \sharp | C^0(\ell,\bI')|.$
\label{lemma61}
\end{lem}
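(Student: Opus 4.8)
The plan is to show that the count $\sharp| C^0(k,\bI)|$ depends only on the class of $u^k x^\bI$ in $R_f^+$, by checking invariance under the two generating equivalence relations identified in Lemma \ref{lemma62}. Concretely, it suffices to prove that if $u^\ell x^{\bI'}$ is obtained from $u^k x^\bI$ by one application of either relation
$(c(k,j)\,x^{\alpha(j)}-1)\,u^{k+1}x^\beta\equiv 0$ or $(b(k,j)\,u-1)\,u^k x^\beta\equiv 0$,
then $C^0(\ell,\bI')=C^0(k,\bI)$ as multi-sets. Since $C^+$ is fixed and $C^0 = C^+\cap C^-$, it is enough to show $C^-(\ell,\bI')=C^-(k,\bI)$ as multi-sets under each move.

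First I would analyse the move $x^\bI \mapsto x^{\bI+\alpha(j)}$ with $k$ fixed (coming from the relation $c(k,j)x^{\alpha(j)} u^{k+1}x^\beta\equiv u^{k+1}x^\beta$, i.e. the two monomials $u^{k+1}x^{\beta+\alpha(j)}$ and $u^{k+1}x^\beta$ are identified, so after shifting $\bI$ the representative changes by $\alpha(j)$). Here the effect on $C^-$ is governed by how the linear forms $\langle v_q,\cdot\rangle$ pair with $\alpha(j)$: by the relations set up in Corollary \ref{cor32}, for $q\in[1;n]$ one has $\langle v_q,\alpha(j)\rangle = \gamma\,\delta_{q,j}$, and for $q=n+1$ one has $\langle v_{n+1},\alpha(j)\rangle = -\gamma$. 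So adding $\alpha(j)$ to $\bI$ shifts the argument $\tfrac1{B_q}\langle v_q,\bI\rangle$ by $\tfrac{\gamma}{B_q}\delta_{q,j}$ for $q\le n$ and by $-\tfrac{\gamma}{B_{n+1}}$ for $q=n+1$. Then for the block indexed by a fixed $q$, the multi-set
$\bigcup_{0\le j'\le B_q-1} \langle \tfrac1{B_q}(j'+1+k\delta_{q,n+1}) + \tfrac{1}{B_q}\langle v_q,\bI\rangle\rangle$
is, after such a shift by a multiple of $\tfrac{\gamma}{B_q}$, unchanged modulo $\bZ$ — one has to use that $\gamma$ is an integer multiple of... no: the key point is that $j'$ ranges over a full residue system mod $B_q$ and taking fractional parts of $\tfrac1{B_q}(j'+\text{const})$ produces a multi-set of residues that is invariant under adding an integer to the constant; since $\gamma\cdot\delta_{q,j}$ (resp.\ $-\gamma$) divided by $B_q$ differs from an integer by the same amount $\tfrac{\gamma\delta_{q,j}}{B_q}$, I must instead argue that the whole collection over $q$ is permuted. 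This is the delicate bookkeeping: the move does \emph{not} fix each block, but permutes the $\gamma$ fractional parts among themselves. The clean way is to invoke the explicit description in Proposition \ref{prop31} and Corollary \ref{cor32} of which monomials sit in $S_\Delta$ (Corollary \ref{SDL}) and to note that passing from $u^k x^\bI$ to an equivalent representative via Lemma \ref{lemma62} corresponds exactly to an integer translation of the vector $({\mathcal L}_1(\bI,0,k),\dots,{\mathcal L}_{n+1}(\bI,0,k))$ together with a change of the book-keeping index $k$ dictated by ${\mathcal L}_{n+1}$; such translations visibly preserve $C^-(k,\bI)$ as a multi-set because each block $\bigcup_{0\le j'\le B_q-1}\langle \tfrac1{B_q}(j'+1+\cdots)\rangle$ already contains all fractional parts in $\tfrac1{B_q}\bZ + \tfrac1{B_q}\langle v_q,\bI\rangle$ allowed mod $1$, and integer shifts of the constant merely relabel the index $j'$.

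Second I would treat the move $u^k x^\beta \mapsto u^{k+1}x^\beta$ coming from $(b(k,j)u-1)u^k x^\beta\equiv 0$, i.e.\ raising $k$ by one. Here only the block $q=n+1$ is affected, where the argument $\tfrac1{B_{n+1}}(j'+1+k)$ becomes $\tfrac1{B_{n+1}}(j'+1+k+1)$; since $j'$ runs over $0,\dots,B_{n+1}-1$, replacing $k$ by $k+1$ just cyclically permutes this block of $B_{n+1}$ values (as a multi-set of fractional parts it is literally unchanged). Hence $C^-$ is again invariant, so $C^0$ and its cardinality are invariant. Combining the two cases and the fact that these relations generate the equivalence on $R_f^+$ (Lemma \ref{lemma62}), the cardinality $\sharp|C^0(k,\bI)|$ descends to a well-defined function on $R_f^+$, and if $u^\ell x^{\bI'}\equiv u^k x^\bI$ then $\sharp|C^0(k,\bI)|=\sharp|C^0(\ell,\bI')|$.

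The main obstacle I anticipate is the first move: one has to be careful that shifting $\bI$ by $\alpha(j)$ really amounts to an \emph{integer} translation of the tuple $(\tfrac{\gamma}{B_1}{\mathcal L}_1,\dots,\tfrac{\gamma}{B_{n+1}}{\mathcal L}_{n+1})$ — this follows from $\langle v_q,\alpha(j)\rangle\in\gamma\bZ$, which in turn is exactly the content of the scalar-product computations in the proof of Corollary \ref{cor32} (using $\langle v_q,\alpha(q)\rangle=\gamma$ and the vanishing relations) — and to confirm that such integer translations permute the multi-set $C^-(k,\bI)$ rather than merely fixing it. Once the translation statement is in place, the multi-set invariance is a short combinatorial check on fractional parts of arithmetic progressions of step $\tfrac1{B_q}$, and no further input beyond Proposition \ref{prop31}, Corollary \ref{cor32}, and Lemma \ref{lemma62} is needed.
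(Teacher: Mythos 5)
Your proposal is correct and is essentially the argument the paper intends: the paper offers no separate proof, declaring the lemma a direct consequence of Lemma \ref{lemma62} together with Proposition \ref{prop31} and Corollary \ref{cor32}, and your verification --- that each generating relation of Lemma \ref{lemma62} (multiplying by $x^{\alpha(j)}$ with $\langle v_q,\alpha(j)\rangle=\gamma\delta_{q,j}$ for $q\le n$, $=-\gamma$ for $q=n+1$, or raising $k$ by one) translates the vector $({\mathcal L}_1(\bI,0,k),\dots,{\mathcal L}_{n+1}(\bI,0,k))$ by an integer vector, while the $q$-th block of $C^-(k,\bI)$ is the multi-set of fractional parts $\langle\frac{j+1+{\mathcal L}_q(\bI,0,k)}{B_q}\rangle$, $j\in[0;B_q-1]$, hence stable under integer shifts of its constant --- is exactly that. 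One small correction: in the normalization of (\ref{C10}) the constant in block $q$ is ${\mathcal L}_q(\bI,0,k)=\frac{\langle v_q,\bI\rangle}{\gamma}+k\delta_{q,n+1}$ (you dropped the $\gamma$ at one point), and with either normalization the induced shift of that constant is an integer, so each block is fixed individually and your intermediate worry that the move ``does not fix each block'' but only permutes fractional parts across blocks is unfounded; the final paragraph of your argument, which is the correct one, already supersedes it.
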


Thanks to Lemma \ref{lemma61}, we can define a positive integer $\bar \gamma =
\sharp |C^+ \setminus C^0(1, 0)|$ $=
\sharp |C^-(1, 0) \setminus C^0(1, 0)|.$
Then ``the irreducible part''
of the kernel of (\ref{RkIt})  has a monodromy representation  equivalent to that of the kernel of the following Pochhammer hypergeometric operator of order $\bar \gamma$  by virtue of  \cite[Corollary 2.6]{BH}

\begin{equation}
 \bar R_{(1,0)}(t, \vartheta_t)=
\prod_{\alpha^+ \in C^+\setminus C^0(1, 0) }(\vartheta_t+
\alpha^+) - t \prod_{\alpha^- \in C^-(1, 0)\setminus
C^0(1, 0)}(\vartheta_t+ \alpha^-+1). 
\label{R10t}
\end{equation}

We consider the set of monomials $S^{mon}_\Delta$ in $S_\Delta.$
We shall further study the  map
$\bar \lambda :S^{mon}_\Delta \rightarrow \frac{1}{\gamma} [0, \gamma)^{n+1} \cap \bQ^{n+1}$
given by 
\begin{equation}\label{lambdauell}
\bar \lambda (u^\ell x^ \bI) = (<{\mathcal L}_{1}(\bI,0,\ell) >, \cdots ,<{\mathcal L}_{n+1}(\bI,0,\ell)>). 
\end{equation}
We recall here Corollary \ref{SDL} which tells us that $ {\mathcal L}_{q}(\bI,0,\ell)$ being independent of $\ell \in \bZ_{\geq 0}$ equals to its fractional part $< {\mathcal L}_{q}(\bI,0,\ell) >$, $q \in [1; n].$  
By virtue of Theorem \ref{thm51}, we see that 
\begin{equation}
  0 \leq  {\mathcal L}_{n+1}(\bI,0,\ell) <1
\end{equation}
for the monomial representative $u^\ell x^\bI \in R_f^+.$ 
Among these $(n+1)$ tuple of linear functions the relation 
\begin{equation}
\sum_{q=1}^{n+1} {\mathcal L}_{q}(\bI,0,\ell) = \ell
\label{Lql}
\end{equation} holds.

Further we assume that  
\begin{equation}
h.c.f. \bB =1.
\label{gcdB}
\end{equation}
In particular, if one of $B_q$'s is equal to 1, this condition is satisfied.

To examine the map $\bar \lambda$ on $R_f^+,$ we shall further use the isomorphism (\ref{RfRF}) to identify
$R_f$ with $\bC \oplus R_f^+.$

Under this convention, we see that the  map $$\bar \lambda|_{R_f^+}:{R_f^+}  \rightarrow \frac{1}{\gamma} [0, \gamma)^{n+1} \cap \bQ^{n+1} $$ is injective. In fact, from Corollary \ref{cor32}, Lemma \ref{lemma62}  and a formula in Definition \ref{Ehrhart}, it follows that the condition $ \bar \lambda (u^\ell x^ \bI ) \equiv \bar \lambda (u^{\ell'} x^{\bI'} ) \; mod \;\bZ^{n+1}$ yields
$$  u^\ell x^ \bI  \equiv const.  u^{\ell'} x^{\bI'} \;\; in  \; R_{f}^+.$$
for some non-zero constant   $const.$

Being motivated by \cite{CG}, we introduce the following two sets of rational numbers located in  $[0,1)^{n+1}$ ;
$$ Im(\bar  \lambda|_{R_f })^0 = Im(\bar  \lambda|_{R_f } ) \cap ({\bQ^{\times}})^{n+1}. $$

\begin{equation}
(\bZ/\gamma)^0 =\{0, \cdots, \gamma-1\} \setminus \{k \in [0; \gamma-1] ; \gamma \mid  k B_q,  \exists  q \in [1; n+1] \}
\label{Zgamma}
\end{equation}
$$ \cong \left \{   ( <\frac{kB_1}{\gamma} >, \cdots, <\frac{kB_{n+1}}{\gamma}>);  k B_q \not \equiv 0 \; mod \; \gamma,  \;\forall  q \in [1; n+1]  \right \}.$$
We will also make use of the following $\gamma$-tuple of monomials from $S^{mon}_\Delta$
$$ {\mathcal K} = \{ u x^{ \alpha(n+2)}, \cdots,  u^{\gamma} x^{ \gamma\alpha(n+2)} \}. $$

The set  $(\bZ/\gamma)^0$  may seem to be dependent on the choice of $\bB$ or that of $\alpha(n+2)$ according to its definition (\ref{Zgamma}). In fact, it is independent of this choice (See remark \ref{independence}). 
For these sets, we establish the following

\begin{proposition}
 Under the assumption  (\ref{gcdB}),
the order $\bar \gamma $ of the differential operator (\ref{R10t}) is equal to the following quantities.
1) $\sharp  Im(\bar  \lambda|_{R_f } )^0$ , 2)   $ \sharp (\bZ/\gamma)^0 $ , 3)  the dimension of the space
$W_{n-1} (H^{n-1}(Z_f)) , $ i.e.
$\bar \gamma$ is independent of the choice of $\alpha(n+2)$ under the condition (\ref{gcdB}).

 4) Furthermore
$\bar \gamma =
\sharp |C^+ \setminus C^0(\ell, \bI)|$ $=
\sharp |C^-(\ell, \bI) \setminus C^0(\ell, \bI)|$
for every $u^\ell x^\bI \in { R^+_f}.$
\label{prop61}
\end{proposition}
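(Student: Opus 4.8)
The plan is to establish the four claimed equalities by connecting each to a common combinatorial object, the multiset of "fractional data" attached to monomials, and then invoking the Batyrev–Stienstra–Katz–Materov dictionary (Theorems \ref{thm12}, \ref{thm13}) together with the Beukers–Heckman interleaving criterion. First I would prove the equivalence of 1) and 2). By the injectivity of $\bar\lambda|_{R_f^+}$ established just above the statement (via Corollary \ref{cor32} and Lemma \ref{lemma62}), the image $Im(\bar\lambda|_{R_f})^0$ is in bijection with those equivalence classes $u^\ell x^{\bI}\in R_f^+$ (together with the class of $\bC$) whose every coordinate ${\mathcal L}_q(\bI,0,\ell)$ is nonzero mod $\bZ$. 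Using the $\gamma$-tuple of monomials ${\mathcal K}=\{u^k x^{k\alpha(n+2)}\}_{k=1}^{\gamma}$, one computes $\bar\lambda(u^k x^{k\alpha(n+2)})=(<kB_1/\gamma>,\dots,<kB_{n+1}/\gamma>)$ directly from (\ref{Lq}) and Lemma \ref{Bvector}, so these realize all residues indexed by $k\in\{0,\dots,\gamma-1\}$; the ones surviving to $(\bQ^\times)^{n+1}$ are exactly those with $kB_q\not\equiv 0\ (\mathrm{mod}\ \gamma)$ for all $q$, i.e. the set $(\bZ/\gamma)^0$ of (\ref{Zgamma}). One still needs that ${\mathcal K}$ hits each class of $R_f$ exactly once modulo the equivalence relations of Lemma \ref{lemma62}; this follows because $|{\mathcal K}|=\gamma=\dim R_f$ and, by injectivity of $\bar\lambda$ again, distinct $k$ give distinct classes unless some coordinate drops. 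This yields $\sharp Im(\bar\lambda|_{R_f})^0=\sharp(\bZ/\gamma)^0$ and, together with the remark on independence of $\alpha(n+2)$, also the final clause of 3).

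Next I would identify both of these with $\dim W_{n-1}(H^{n-1}(Z_f))$. By (\ref{WjPH}) in \S\ref{Hodge}, $W_{n-1}(H^{n-1}(Z_f))\cong W_{n+1}(H^n(\bT^n\setminus Z_f))$, which by part 2) of Theorem \ref{thm13} is $\rho({\mathcal I}^{(1)})$ — the image of the ideal $I_\Delta^{(1)}$ spanned by monomials $u^k x^\alpha$ with $\alpha/k$ in $\Delta(f)$ but not on any codimension-one face. Via Corollary \ref{cor32}, $\alpha/k$ lies on a codimension-one face of $\Delta(f)$ precisely when some ${\mathcal L}_q(\alpha,0,k)\in\{0,1\}$; combined with $0\le{\mathcal L}_q\le 1$ this says $\bar\lambda(u^k x^\alpha)$ has a zero coordinate exactly when the monomial fails to lie in $I_\Delta^{(1)}$. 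Passing to $R_f^+$ and using the dimension formula $\dim W_{n+1}=\sum$ of the appropriate $\psi$'s, or more directly using that the Ehrhart polynomial $\Phi_{\Delta(f)}(\t)$ counts interior lattice points, one gets $\dim W_{n-1}(H^{n-1}(Z_f))=\sharp Im(\bar\lambda|_{R_f})^0$. The point here is that the weight filtration on Batyrev's ring is read off from face membership, which is exactly what the linear functions ${\mathcal L}_q$ encode.

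Finally, for the equality with $\bar\gamma$ in (\ref{R10t}) and the stronger claim 4), I would invoke the Beukers–Heckman analysis already cited around (\ref{R10t}): the irreducible part of the Pochhammer operator $\tilde R_{(1,0)}$ has order $\sharp|C^+\setminus C^0(1,0)|$, and by \cite[Corollary 2.6]{BH} this equals the number of parameters of $C^+$ not cancelled against $C^-(1,0)$. From the explicit description (\ref{C10}) of $C^-(k,\bI)$ as the union over $q$ of the residues $<\tfrac1{B_q}(j+1+k\delta_{q,n+1}+<v_q,\bI>/\gamma)>$, one checks that $C^+\cap C^-(k,\bI)$, as a multiset, corresponds precisely to residue classes $k'\in\bZ/\gamma$ for which some coordinate $<k'B_q/\gamma>$ vanishes — independently of $(k,\bI)$, because shifting $(k,\bI)$ permutes the full collection $C^-$ without changing the cancellation count (this is where Lemma \ref{lemma61} is used). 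Hence $\sharp|C^+\setminus C^0(k,\bI)|=\sharp(\bZ/\gamma)^0=\bar\gamma$ for every $u^\ell x^\bI\in R_f^+$, giving 4) and closing the circle of equalities. The main obstacle I anticipate is the bookkeeping in this last step: showing rigorously that the multiset intersection $C^+\cap C^-(k,\bI)$ is genuinely independent of $(k,\bI)$ and matches the "zero-coordinate" locus in $(\bZ/\gamma)$ requires a careful translation between the $B_q$-periodic indexing in (\ref{C10}) and the $\gamma$-periodic indexing in (\ref{Zgamma}), using $\sum_q B_q=\gamma$ and the congruences $<v_q,\alpha(n+2)>=B_q$ — essentially a Chinese-remainder-type argument under the hypothesis $h.c.f.\,\bB=1$.
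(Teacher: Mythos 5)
Your overall route is the same as the paper's: use the $\gamma$-tuple ${\mathcal K}=\{u^k x^{k\alpha(n+2)}\}_k$ and the barycenter identity $\bar\lambda(u^k x^{k\alpha(n+2)})=(\langle kB_1/\gamma\rangle,\dots,\langle kB_{n+1}/\gamma\rangle)$ to match $Im(\bar\lambda|_{R_f})^0$ with $(\bZ/\gamma)^0$, read off part 3) from Theorem \ref{thm13}, 2) together with (\ref{WjPH}) via the ideal $I^{(1)}_\Delta$, and get 4) by observing that $C^-(\ell,\bI)$ is a $\tfrac{k}{\gamma}$-shift of $C^-(1,0)$, which leaves the multiset intersection with $C^+$ unchanged. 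However, there is a genuine gap at the pivotal step. You assert that ``${\mathcal K}$ hits each class of $R_f$ exactly once'' because $|{\mathcal K}|=\gamma=\dim R_f$ and $\bar\lambda$ is injective. This does not follow: $Im(\bar\lambda|_{{\mathcal K}})$ and $Im(\bar\lambda|_{R_f})$ are merely two $\gamma$-element subsets of the group $(\tfrac1\gamma\bZ/\bZ)^{n+1}$, which has $\gamma^{n+1}$ elements, so equal cardinality gives no containment in either direction. What is needed, and what the paper proves by an explicit descending induction using Lemma \ref{lemma62} and the membership criterion of Corollary \ref{SDL}, is that each $u^{k}x^{k\alpha(n+2)}$ is equivalent in $R_f^+$ to a (unique, nonzero) monomial representative $u^\ell x^{\bI}\in Rep(R_f^+)$, i.e.\ the reduction (\ref{monomialrep}); in particular one must rule out that some power $u^kx^{k\alpha(n+2)}$ dies in the quotient, and only then do the two images coincide. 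Your counting argument silently assumes exactly this.

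The same omission undermines your part 4): the statement is claimed for \emph{every} $u^\ell x^{\bI}\in R_f^+$, and your appeal to Lemma \ref{lemma61} only gives constancy of $\sharp C^0$ within one equivalence class; to cover all of $R_f^+$ you need that every class is equivalent to some $u^kx^{k\alpha(n+2)}$, which is again the unproven reduction (\ref{monomialrep}). (For a general monomial the congruences $\langle v_q,\bI\rangle\equiv kB_q \bmod \gamma$, which make the shift argument work, are not available without it.) A smaller point: the injectivity of $k\mapsto(\langle kB_q/\gamma\rangle)_q$ — the only place where $h.c.f.\,\bB=1$ really enters, via the order of $(1,\alpha(n+2))$ modulo the lattice spanned by the $(1,\alpha(j))$ — is used but never argued; you only gesture at a ``Chinese-remainder-type argument'' at the end, whereas the paper carries it out with the prime-factorization computation. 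With the reduction (\ref{monomialrep}) and this injectivity supplied, the rest of your outline does assemble into the paper's proof.
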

\begin{proof}
 1) First we show the equality
 $\bar \gamma = \sharp (\bZ/\gamma)^0. $ 
The definition $\bar \gamma = \sharp |C^+ \setminus C^0(1, 0)|$ entails  
$$ \bar \gamma = \gamma -   \sharp |\bigcup_{q=1}^{n+1}\bigcup_{0 \leq j \leq B_{ q}-1}  
(  \frac{j} {B_{q}} )   \bigcap_{ 0 \leq i \leq \gamma-1} (\frac{i}{\gamma} )|.$$
It is easy to verify that the RHS of the above expression is equal to 
$\sharp (\bZ/\gamma)^0$ under the condition
(\ref{gcdB}).


2) In order to see  $\sharp  Im(\bar  \lambda|_{R_f } )^0$ 
= $ \sharp (\bZ/\gamma)^0, $ we examine the image of the map
$$\bar \lambda|_{\cal K } :{\cal K} \rightarrow \frac{1}{\gamma} [0, \gamma)^{n+1} \cap \bQ^{n+1}.$$
Under the condition (\ref{gcdB}), the map $\bar \lambda|_{\cal K }$ is injective. The injectivity of $\bar \lambda|_{\cal K }$ can be shown in  two steps.  If one of $B_q$ is equal to 1, 
\begin{equation}
  (  \frac{j B_{1}}{\gamma}, \cdots,  \frac{j B_{n+1}} {\gamma}) \equiv 0 \;\;\; mod \;\bZ^{n+1} 
\label{jaBZ}
\end{equation}
if and only if $ j \equiv 0 \; mod \; \gamma.$
In general
suppose that $B_q$ admits the prime decomposition $B_q = \prod_{j=1}^m p_i^{r_i(q)}$ with $r_i(q) \geq 0.$ 
In a similar manner assume $\gamma = \prod_{j=1}^m p_i^{g_i}.$
The minimal number $j $ such that $jB_q$ is divided by $\gamma$ for every $q$ can be expressed as
$$ j =  \prod_{i=1}^m p_i^{g_i - min_q r_i(q)}.$$
The condition  (\ref{gcdB})  means that $  min_q r_i(q) =0. $ Therefore $j = \gamma.$
 This means that $\bar \lambda|_{\cal K }$ is injective. In fact if  (\ref{gcdB}) is not satisfied 
$\bar \lambda|_{\cal K }$ is neither injective nor surjective onto  $ Im(\bar \lambda|_{\cal R_f })^0.$

Further we show that for every $k \in [1; \gamma-1],$ we find unique monomial representative $u^\ell x^\bI \in Rep({R^+_f})$ (\ref{Ihdelta}) such that
\begin{equation}  u^\ell x^ \bI  \equiv const.  u^{k} x^{k \alpha(n+2)} \;\; in\;  {R^+_f}
\label{monomialrep}
\end{equation}
for some non-zero constant   $const$ .

By Lemma \ref{lemma62}, for a $\Delta(f)$ regular Laurent polynomial $f$ we have
$${\mathcal L}_{n+1}(k, 0, k \alpha(n+2)) -1 = {\mathcal L}_{n+1}(k-1, 0, k \alpha(n+2) - \alpha(i)) $$
for some $ i \in [1; n]$ satisfying $  u^{k-1} x^{k \alpha(n+2) - \alpha(i)} \in S_\Delta.$
By induction, we find $r \geq 0$ such that
$$ 0 \leq {\mathcal L}_{n+1}(k, 0, k \alpha(n+2)) -r = {\mathcal L}_{n+1} \left(k-r, 0, k \alpha(n+2) - \sum_{i \in {\mathbb  I}_r}\alpha(i) \right)  <1   $$
and $0 < k-r \leq n,$ for an index set $ {\mathbb  I}_r \subset \{1, \cdots, n\}^r. $ Here  $u^{k-r} x^{k \alpha(n+2) - \sum_{i \in {\mathbb  I}_r}\alpha(i)}  \in S_\Delta$ while for every $i' \in \{1, \cdots, n\}$ ,  $u^{k-r-1} x^{k \alpha(n+2) - \sum_{i \in {\mathbb  I}_r}\alpha(i) - \alpha(i')}  \not\in S_\Delta.$ That is to say this descending process starting from  $  u^{k} x^{k \alpha(n+2)}$ stops at certain monomial representative of the quotient ring $R_f^+$ (see figure of Example \ref{example345}).
In this way, the desired monomial   representative  $ u^\ell x^\bI \in  R_f^+$  with $\ell = k - r,$ $\bI =  k \alpha(n+2) - \sum_{i \in {\mathbb  I}_r}   \alpha(i)$ is obtained.
The equality $ \bar \lambda  (  u^\ell x^ \bI  ) $ = $  \bar  \lambda (u^k x^ {k \alpha(n+2) })$ follows immediately from
Corollary \ref{cor32}, Lemma \ref{lemma62}.
This shows that the images of two maps $\bar  \lambda|_{R_f }$ and $\bar  \lambda|_{\mathcal K }$ coincide.
The uniqueness of the required monomial $u^\ell x^ \bI$ follows from the injectivity of $\bar \lambda|_{\cal K }$ and that of $  \bar \lambda|_{R_f }.$ 
In short $$  Im(\bar  \lambda|_{R_f })^0 = \bigcup_{k \in [1;   \gamma]} \bar \lambda (u^k x^{k \alpha(n+2)}) \bigcap ({\bQ}^{\times})^{n+1}. $$

We see that  $\bar \lambda ( \alpha(n+2),0,1) =  (\frac{B_1}{\gamma}, \cdots, \frac{B_{n+1}}{\gamma})$ 
as the point $ ( 1, \alpha(n+2)) $ represents the weighted barycenter $(1, \sum_{q=1}^{n+1} \frac{B_q \alpha(q)}{\gamma})$ (cf. Proposition \ref{prop31}. 3), Corollary \ref{cor32} and its proof). That is to say,  $Im(\bar  \lambda|_{R_f })^0$ coincides with  the second half of (\ref{Zgamma}).
This shows the equality  $\sharp  Im(\bar  \lambda|_{R_f } )^0$ 
= $ \sharp (\bZ/\gamma)^0.$ 

3)  The image of the map $\bar  \lambda|_{R_f }$ admits the following representation:
$$ \bigcup_{   u^\ell x^ \bI  \in   { R^+_f } } (\left<\frac{<v_1, \bI>}{\gamma} \right>, \cdots,\left<\frac{<v_{n+1}, \bI>}{\gamma} \right>  ) \bigcap  ({\bQ}^{\times})^{n+1}. $$
This means that only the monomials   $ u^\ell x^ \bI \in I_\Delta^{(1)}$ contribute to 
$ Im(\bar  \lambda|_{R_f })^0 .$ By virtue of Theorem \ref{thm13}, 2) and (\ref{WjPH}), 
we conclude  $  \sharp Im(\bar  \lambda|_{R_f })^0  = dim W_{n-1}(H^{n-1}(Z_f)). $

4)  The argument in 2) shows that
$  \frac{k <v_q, \alpha(n+2)>}{\gamma} =  \frac{k B_q}{\gamma}  $ for every $q \in [1; n+1], k \in \bZ.$
For $u^\ell x^\bI \in {R^+_f}$ such that
$$  u^\ell x^ \bI  \equiv const.  u^{k} x^{k \alpha(n+2)} \;\; in  \; R_{f}^+$$
we calculate
$$ C^-(\ell, \bI)= \bigcup_{q=1}^{n+1}\bigcup_{0 \leq j \leq B_{ q}-1}  
\left< \frac{1} {B_{q}} (j+1+ k \delta_{q, n+1}) + \frac{k}{\gamma} \right>.$$
This set is nothing  but a $\frac{k}{\gamma}$ shift of  $C^-(k, 0)$ introduced in (\ref{C10}). This yields $   \sharp C^0(\ell, \bI) =$    $\sharp C^0(k, 0)$ $=\sharp C^0(1, 0)$
and  $\sharp |C^+\setminus C^0(\ell, \bI)|$ $=
\sharp |C^+ \setminus C^0(1,0)|$ = $\bar \gamma.$  
\end{proof}

\begin{remark}
{\rm We remark here that the space $W_{n-1}(H^{n-1}(Z_f)) \cong W_{n-1}(H^{n-1}(Z_{f_0})) $ of a pure Hodge structure
 is known to be isomorphic to $PH^{n-1}(\bar Z_{f_0})$
where $\bar Z_{f_0}$ is a compactification of  $Z_{f_0}$ defined in a complete simplicial toric variety ${\mathbb P}_{\Delta(f_0)} = Proj S_{\Delta(f_0)}$ (\cite[Proposition 11.6]{BatyCox}).
Thus we have $\bar \gamma = dim (PH^{n-1}({\bar Z}_{f_0})). $

Authors like L.Borisov-R.P.Horja  \cite[Corollary 5.12]{BorHor2},  J.Stienstra \cite{Stienstra} studied the $\bar \gamma$ dimensional space embedded into $H^{n-1}(Z_f)$.
They investigate this space as solution space of  GKZ A-hypergeometric functions.

The positive integer $\gamma - \bar \gamma$ can be interpreted as the dimension of period integrals of the affine variety $Z_f$ originated from the homology of the ambient space $\bT^n.$ In the exact sequence seen from Theorem \ref{thm13}, 2),
$$ 
0 \rightarrow \bigcup_{2 \leq j \leq n+2 } \rho({\mathcal I}^{(j)}) \rightarrow H^{n}(\bT^n \setminus Z_f) {\rightarrow} \rho({\mathcal I}^{(1)})
\rightarrow 0$$
the term second from left can be interpreted as the dual to "relations" (of rank $\gamma - \bar \gamma$ ) among period integrals.
The quotient by these "relations" would lead to period integrals originated from   $W_{n-1}(H^{n-1}(Z_f)).$ In principle these "relations" can be read off in comparing monodromy representation matrices (\ref{hinf}), (\ref{h0}) in Lemma \ref{levelt} and
 (\ref{eq:hinfty}),  (\ref{eq:h0}) after proper base change of solution spaces  $Ker \; {\tilde R}_{(1,0)}(t, \vartheta_t)  \supset Ker \; {\bar R}_{(1,0)}(t, \vartheta_t)$. 
} 

\label{WHnZ}
\end{remark}

\begin{remark}
{\rm Proposition \ref{prop61} gives a general formula for the hypergeometric equations treated in the following special cases. Here $f(x)$ denotes the Laurent polynomial (\ref{fpoly}).

(1) "Dwork family" \cite{Dominguez} with
$$ f(x)  = x_1^{d_n}+ \cdots +  x_n^{d_n} +1 + s x_1^{w_1} \cdots x_n^{w_n},$$
where $w_0 = d_n- \sum_{j=1}^n w_j,$ $w_j \in \bZ_{>0}, \forall j \in [0;n],$ such that $h.c.f. ({w_0},\cdots,  {w_n})=1.$

(2) Special case of the above "Dwork family" \cite{Dominguez} with $d_n =n$ \cite{Salerno}.
The reduction procedure in the quotient ring achieved in Proposition \ref{prop61}, 2) is a generalised analogy of Algorithm 1 in \cite{Salerno}.
For this case, in our previous publications \cite{Tan04}, \cite{TaU13},  concrete global monodromy representations have been used to establish  homological mirror type statements and Dubrovin's conjecture for the quantum cohomology of (weighted) projective space.

(3) Berglund-H\"ubsch "invertible" polynomial \cite{Gahrs}.
$$ f(x)  = g(1,x) \prod_{i=1}^n x_i^{- E_{i,n}}$$
where
$$ g(x_0, x) = \sum_{j=0}^{n}  \prod_{i=0}^n x_i^{E_{i,j}}$$
a weighted homogeneous polynomial in $(x_0,x)= (x_0, x_1,\cdots, x_n)$ with $weight(x_j)= q_j ,$ $\sum_{i=0}^n q_i E_{i,j} = d, \forall j \in [0;n].$

In our previous preprint \cite{Tan05}, 
we studied  several conditions
to be imposed on a mirror symmetry candidate 
to  the generic multiply weighted homogeneous Calabi-Yau
complete intersection variety (including hypersurface case) defined in the product of the weighted homogeneous projective spaces.
We proposed several properties for a Calabi-Yau complete intersection variety
so that its period integrals can be expressed by means of weighted homogeneous 
weights of its mirror counterpart candidate.
As a corollary, we  remarked certain duality between the monodromy data
and the Poincar\'e polynomials of the Euler characteristic for 
the pairs of these varieties.
}
\label{salerno}
\end{remark}

\begin{remark}
{\rm The arguments developed in the proof of Proposition \ref{prop61}, 2), 3), show that   
${\bar X}_{0}({\mathsf t})$ does not depend on the choice of $\alpha(n+2)$ under the condition (\ref{gcdB}). In other words, the set
 $ (\bZ/\gamma)^0 $ is determined in a way independent of the choice of the deformation term $ s x^{\alpha(n+2)}$ in 
(\ref{fpoly}).}
\label{independence}
\end{remark}

\begin{example}
{\rm We recall the Example \ref{example345}. 
A simple examination of the set of rational vectors
$  (\frac{B_1 k}{\gamma}, \frac{B_2k}{\gamma},\frac{B_3 k}{\gamma}  ) =(\frac{5k}{12}, \frac{3k}{12},\frac{4k}{12}  ),$
$ k \in \{0, \cdots, 11\}$ gives us $(\bZ/12)^0 = \{1,2,5,7,10,11 \}.$
In fact,  there are 6 monomials in $R_f \cap I^{(1)}:$ $\{ u x_1,  u x_1^2,    u x_1x_2,  u^2 x_1^4x_2,  u^2 x_1^4x_2^2,  u^2 x_1^5x_2^2 \}. $}

\end{example}

Let us introduce orderings on the sets of rational numbers $  C^+\setminus C^0(1, 0) $
$$ 0< \alpha^+_1 <   \alpha^+_2 < \cdots <    \alpha^+_{\bar \gamma},$$
and on $  C^-(1,0)\setminus C^0(1, 0), $
$$ 0=\alpha^-_1 \leq   \alpha^-_2 \leq \cdots \leq   \alpha^-_{\bar \gamma}. $$
After \cite[1.2]{CG}, we define the following integer for $ k \in (\bZ/ \gamma)^0,$
\begin{equation}
 p(k) = \sharp \{i ;   \alpha^-_i  < \frac{k}{\gamma} \}  -j
\label{pk}
\end{equation}
where the index $j$ is determined by the relation $ \alpha^+_j  =  \frac{k}{\gamma}. $
We can state the following proposition corresponding to \cite[Proposition 1.5]{CG}.
See also \cite[Proposition 2.5]{DominguezSevenheck}
\begin{proposition}
For $ k \in (\bZ/ \gamma)^0,$ we define the integer $\ell \in [1; n]$
satisfying  (\ref{monomialrep}). Then we have
$$   \sharp \; p^{-1}(n-\ell)= h^{\ell, n+1 - \ell} (PH^n (\bT^n \setminus Z_f)) = dim\;  Gr^{\ell}_F Gr_{n+1}^{w} (PH^n (\bT^n \setminus Z_f)). $$
\label{hln}
\end{proposition}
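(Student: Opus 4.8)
The plan is to match the combinatorial count $\sharp\,p^{-1}(n-\ell)$ against the Hodge number $h^{\ell,n+1-\ell}(PH^n(\bT^n\setminus Z_f))$ by running both quantities through the bijection between $(\bZ/\gamma)^0$ and the monomials of $R_f^+\cap I_\Delta^{(1)}$ established in the proof of Proposition~\ref{prop61}. First I would recall from that proof that for each $k\in(\bZ/\gamma)^0$ there is a \emph{unique} monomial representative $u^\ell x^{\bI}\in Rep(R_f^+)$ with $u^\ell x^{\bI}\equiv const.\;u^k x^{k\alpha(n+2)}$ in $R_f^+$, and that the book-keeping index $\ell$ is exactly the one appearing in the statement; moreover $\bar\lambda(u^\ell x^{\bI})=\bar\lambda(u^k x^{k\alpha(n+2)})=(\langle kB_1/\gamma\rangle,\dots,\langle kB_{n+1}/\gamma\rangle)$. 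By Theorem~\ref{thm12} and Theorem~\ref{thm13}, the monomials of $R_f^+$ of $\cal E$-degree (equivalently book-keeping index) $\ell$ that lie in $I_\Delta^{(1)}$ biject with a basis of $Gr_F^{\,n+1-\ell}Gr^w_{n+1}PH^n(\bT^n\setminus Z_f)$, so the right-hand side of the claimed equality is $\sharp\{k\in(\bZ/\gamma)^0 : \ell(k)=\ell\}$, where I write $\ell(k)$ for the index attached to $k$. Hence it suffices to prove that for $k\in(\bZ/\gamma)^0$ the equality $p(k)=n-\ell(k)$ holds.

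Next I would compute $p(k)$ directly from its definition (\ref{pk}). Write $j=j(k)$ for the index with $\alpha^+_{j}=k/\gamma$, so that $j-1=\sharp\{i\in[0;\gamma-1]\setminus(\text{excluded}) : i/\gamma<k/\gamma\}$, which is simply the rank of $k$ inside the ordered set $(\bZ/\gamma)^0$. The other term, $\sharp\{i:\alpha^-_i<k/\gamma\}$, counts elements of the multiset $C^-(1,0)\setminus C^0(1,0)$ strictly below $k/\gamma$. The key computation is to evaluate this count by separating the contributions of the $n+1$ blocks $\bigcup_{0\le j\le B_q-1}\langle (j+1+\delta_{q,n+1})/B_q\rangle$ and comparing, block by block, with the "staircase" description of $S_\Delta$ from Corollary~\ref{SDL} and Corollary~\ref{cor32}: a threshold crossing in the $q$-th block as one decreases from $u^k x^{k\alpha(n+2)}$ toward $u^\ell x^{\bI}$ corresponds precisely to one step of the descending process $k\mapsto k-1\mapsto\cdots\mapsto\ell$ described in the proof of Proposition~\ref{prop61}(2), where at each step one subtracts some $\alpha(i)$ and the book-keeping index drops by $1$. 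Carefully bookkeeping these crossings should yield $\sharp\{i:\alpha^-_i<k/\gamma\}-(j-1)=(n-\ell)$, i.e. $p(k)=n-\ell$; the shift by $1$ in the $(n+1)$-st block (the $\delta_{q,n+1}$ and the "$+1$" in $C^-$) is what accounts for the constraint $\ell\in[1;n]$ rather than $[0;n]$, matching ${\mathcal L}_{n+1}(\bI,0,\ell)\in[0,1)$ from Theorem~\ref{thm51}(1).

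The main obstacle I anticipate is this last bookkeeping: translating the abstract count $\sharp\{i:\alpha^-_i<k/\gamma\}$ of points of a multiset on $[0,1)$ into the \emph{geometric} quantity $n-\ell$ — i.e. showing that the number of blocks $q$ whose fractional threshold $\langle(j+1+\delta_{q,n+1})/B_q + k/\gamma\rangle$ "wraps around" past $k/\gamma$ equals the number of descent steps $r=k-\ell$. This requires using $\langle v_q,\alpha(n+2)\rangle=B_q$ (formula (\ref{Bq})), the relations $\sum_{q=1}^{n+1}v_q=0$ and $\sum_{q=1}^{n+1}{\mathcal L}_q(\bI,0,\ell)=\ell$ (formulas (\ref{Wqvq}), (\ref{Lql})), and the fact that at each descent step exactly one of the $n$ linear forms ${\mathcal L}_q(\cdot,0,\cdot)$, $q\in[1;n]$, resets from near $1$ to near $0$ while ${\mathcal L}_{n+1}$ drops by $1$. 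Once this correspondence between "wrap-arounds" and "descent steps" is made precise, $p(k)=n-\ell(k)$ follows, and summing over $k\in p^{-1}(n-\ell)$ together with Proposition~\ref{prop61}(3) and the identification of $\ell$-graded pieces of $R_f^+\cap I_\Delta^{(1)}$ with $Gr^\ell_F Gr^w_{n+1}PH^n(\bT^n\setminus Z_f)$ completes the proof. An alternative, cleaner route — which I would pursue if the direct count proves too delicate — is to invoke the Corti–Golyshev framework \cite[Proposition 1.5]{CG} after checking that their hypotheses translate verbatim under the dictionary $k\leftrightarrow u^k x^{k\alpha(n+2)}\leftrightarrow u^\ell x^{\bI}$, reducing the statement to their theorem.
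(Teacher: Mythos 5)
Your reduction is the right one and is essentially the paper's: use the bijection $k\leftrightarrow u^\ell x^{\bI}$ from Proposition \ref{prop61} to identify $\sharp\,p^{-1}(n-\ell)$ with the number of monomial representatives of index $\ell$, and then prove the single identity $p(k)=n-\ell(k)$. But that identity is precisely the step you do not prove: you leave it as an ``anticipated obstacle'' to be settled by a block-by-block correspondence between fractional-part wrap-arounds and the descent steps of Proposition \ref{prop61}, 2), and that correspondence is never carried out (nor is it needed -- see below). As written this is a gap, not a proof; the fallback of quoting \cite[Proposition 1.5]{CG} is also not a proof in this setting, since the present proposition is only ``corresponding to'' that statement and the paper deliberately rederives it (citing typographical errors in \cite[1.2]{CG}). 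There is in addition an off-by-one slip: definition (\ref{pk}) subtracts $j$, not $j-1$, so your displayed identity $\sharp\{i:\alpha^-_i<k/\gamma\}-(j-1)=n-\ell$ contradicts your stated conclusion $p(k)=n-\ell$. Concretely, in Example \ref{example345} with $k=1$ one has two elements $\alpha^-_i=0$ below $1/12$ and $j=1$, so $p(1)=2-1=1=n-\ell$ with $\ell=1$, while your displayed left-hand side equals $2$.

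The computation that closes the gap is much simpler than your plan. Since the multiset $C^0(1,0)$ is removed from both $C^+$ and $C^-(1,0)$, its contribution cancels in the difference, so one may count with the full multisets: the $q$-th block of $C^-(1,0)$ contributes $1+[kB_q/\gamma]$ points of $[0,k/\gamma)$ (here $kB_q/\gamma\notin\bZ$ because $k\in(\bZ/\gamma)^0$), while $C^+$ contributes $k$ points, and the term $\alpha^+_j=k/\gamma$ itself accounts for one more unit; hence $p(k)=\sum_{q=1}^{n+1}\bigl(1+[kB_q/\gamma]\bigr)-(k+1)$. Using $\sum_{q=1}^{n+1}B_q=\gamma$ from (\ref{sumBq}) this equals $n-\sum_{q=1}^{n+1}\langle kB_q/\gamma\rangle$. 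Finally, for the representative $u^\ell x^{\bI}$ of (\ref{monomialrep}) one has $\langle kB_q/\gamma\rangle={\mathcal L}_q(\bI,0,\ell)$ for all $q$, each lying in $(0,1)$ (Theorem \ref{thm51}, 1)), so by (\ref{Lql}) their sum is exactly $\ell$, giving $p(k)=n-\ell$ with no descent bookkeeping at all. With that identity in hand, the counting you set up in your first paragraph (via Theorems \ref{thm12}, \ref{thm13} and Proposition \ref{prop61}) does finish the proof as in the paper.
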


\begin{proof}
We present our proof in view of typographical errors in \cite[1.2]{CG}.
By definition 
$$ p(k) =\sharp |  (C^-(1,0)\setminus C^0(1, 0))\cap [0, \frac{k}{\gamma}) | - \sharp |  (C^+ \setminus C^0(1, 0))\cap [0, \frac{k}{\gamma}) |$$
$$ = \sharp |  C^-(1,0) \cap [0, \frac{k}{\gamma}) |  - \sharp | C^+ \cap [0, \frac{k}{\gamma}) | =  \sum_{q=1}^{n+1} (1+ [\frac{kB_q}{\gamma}]) -(k+1).$$
By (\ref{sumBq}), this is equal to 
$$ n- \sum_{q=1}^{n+1} (\frac{kB_q}{\gamma}- [\frac{kB_q}{\gamma}]).$$
By (\ref{Lql}),   we have
$  \sum_{q=1}^{n+1} \left< \frac{kB_q}{\gamma} \right> = \ell$
for  $u^\ell x^\bI$ satisfying (\ref{monomialrep}). Taking Theorem \ref{thm51}, 1) into account, we obtain the desired result.
\end{proof}

From the proof of Proposition \ref{prop61}, 2), we see that
$\bar \lambda ( \alpha(n+2),0,1) =  (\frac{B_1}{\gamma}, \cdots, \frac{B_{n+1}}{\gamma})$ 
and the following consequence can be derived from Proposition \ref{hln}.

\begin{cor}
We consider a Mellin transform (up to constant multiplication of variable $t$)  of solutions to (\ref{R10t}),
$$ 
M^0_{u x^{\alpha(n+2)}} (\gamma z) = \frac{ \prod_{q=1}^{n+1} \Gamma(\frac{B_q( 1-\gamma z)}{\gamma})}{\Gamma(1-\gamma z)} $$
by choosing a suitable $g(z)$ in (\ref{MukxIg}).
Then the integer (\ref{pk})  for $ k \in (\bZ/ \gamma)^0$ can be expressed by
$$  p(k) = - \sum_{ \frac{1}{\gamma} \leq z_i  \leq \frac{k+1}{\gamma} }  Res_{z=z_i} d log  M^0_{u x^{\alpha(n+2)}} (\gamma z), $$
where the residues are taken at the set of poles $\{ z_i \}_{i=1}^{m(k)}$
with $$m(k) = \sharp |  \left((C^+  \cup C^-(1, 0)) \setminus C^0(1, 0)  \right) \cap [0, \frac{k}{\gamma})|.$$

\label{logM}
\end{cor}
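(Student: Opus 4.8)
The plan is to rewrite the integer $p(k)$ of (\ref{pk}) as a sum of residues of the logarithmic derivative of the Mellin transform $M^0_{ux^{\alpha(n+2)}}(\gamma z)$, exploiting the fact that the poles of this Gamma-quotient are exactly the shifted members of $C^+$ and $C^-(1,0)$, and that $d\log\Gamma$ has residue $1$ (resp. $-1$) at each pole of $\Gamma$ in numerator (resp. denominator). First I would recall from the proof of Proposition \ref{prop61}, 2) that $\bar\lambda(\alpha(n+2),0,1)=(\tfrac{B_1}{\gamma},\dots,\tfrac{B_{n+1}}{\gamma})$, so that the numerator factor $\Gamma\!\left(\tfrac{B_q(1-\gamma z)}{\gamma}\right)$ has its poles precisely where $\tfrac{B_q(1-\gamma z)}{\gamma}\in\bZ_{\le 0}$, i.e. at values of $z$ whose scaled position $\gamma z$ sits at the points of $\bigcup_{q}\bigcup_{0\le j\le B_q-1}\langle\tfrac{j+1}{B_q}\rangle+\bZ_{\ge 0}$ — which, after the normalization used in (\ref{C10}) with $k=1$, $\bI=0$, is exactly $C^-(1,0)$ (modulo integer translation). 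Dually, the denominator $\Gamma(1-\gamma z)$ contributes poles of $M^0$ cancelled out, equivalently $d\log M^0$ acquires residue $-1$ at the points corresponding to $C^+=\{0,\tfrac1\gamma,\dots,\tfrac{\gamma-1}{\gamma}\}$, because $\Gamma(1-\gamma z)$ has poles when $1-\gamma z\in\bZ_{\ge 1}$, i.e. $\gamma z\in\bZ_{\le 0}$, and these are the images of the full set $C^+$ under the periodization. The common points, i.e. those in $C^0(1,0)$, produce cancelling $\pm1$ residues and contribute nothing, which is why the effective pole set in the statement is $\big((C^+\cup C^-(1,0))\setminus C^0(1,0)\big)\cap[0,\tfrac{k}{\gamma})$, of cardinality $m(k)$.

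Next I would assemble the count. Summing the residues of $d\log M^0_{ux^{\alpha(n+2)}}(\gamma z)$ over $z_i$ in the range $\tfrac1\gamma\le z_i\le\tfrac{k+1}{\gamma}$ (equivalently over the interval $[0,\tfrac{k}{\gamma})$ in the $\gamma z$-normalized picture, after the affine change $z\mapsto 1-\gamma z$ that turns the argument $\tfrac{B_q(1-\gamma z)}{\gamma}$ into $\tfrac{B_q\cdot(\text{coordinate})}{\gamma}$) gives
$$\sum_{z_i}\operatorname{Res}_{z=z_i}d\log M^0 = \sharp\big|C^-(1,0)\cap[0,\tfrac{k}{\gamma})\big| - \sharp\big|C^+\cap[0,\tfrac{k}{\gamma})\big|,$$
since $C^0(1,0)$ members cancel. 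But the right-hand side is precisely the expression computed in the proof of Proposition \ref{hln}: there it is shown that $p(k)=\sharp|C^-(1,0)\cap[0,\tfrac k\gamma)|-\sharp|C^+\cap[0,\tfrac k\gamma)|$. Hence $p(k)=-\sum_{z_i}\operatorname{Res}_{z=z_i}d\log M^0_{ux^{\alpha(n+2)}}(\gamma z)$ once one accounts for the sign introduced by the reflection $z\mapsto 1-\gamma z$, which swaps the roles of numerator and denominator poles and thus flips the overall sign — this is the source of the minus sign in the statement. The count $m(k)$ of poles in the range is then immediate from the description of the effective pole locus.

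The main obstacle I anticipate is purely bookkeeping: one must be scrupulous about the two linear changes of variable at play — the Kummer-type rescaling $\gamma z$ and the reflection $1-\gamma z$ coming from the functional equation of $\Gamma$ — and verify that under their composition the poles of the numerator factors $\Gamma(\tfrac{B_q(1-\gamma z)}{\gamma})$ land exactly on the normalized copies of $C^-(1,0)$ as defined in (\ref{C10}) and the poles of $\Gamma(1-\gamma z)$ on those of $C^+$, with the interval $\tfrac1\gamma\le z\le\tfrac{k+1}{\gamma}$ mapping onto $[0,\tfrac k\gamma)$. Once this dictionary is fixed, there is nothing left but to invoke the residue computation of $d\log\Gamma$ and the already-established identity for $p(k)$ from the proof of Proposition \ref{hln}; no new geometric input beyond Proposition \ref{prop61} and Theorem \ref{thm51}, 1) is needed.
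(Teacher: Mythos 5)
Your strategy is the one the paper intends: identify the poles of the numerator factors $\Gamma\bigl(B_q(1-\gamma z)/\gamma\bigr)$ with (translates of) $C^-(1,0)$, the poles of the denominator $\Gamma(1-\gamma z)$ with $C^+$, let the common points $C^0(1,0)$ cancel, and then invoke the counting identity $p(k)=\sum_{q=1}^{n+1}\bigl(1+[\tfrac{kB_q}{\gamma}]\bigr)-(k+1)$ from the proof of Proposition \ref{hln}. (The paper itself gives no more than this reduction.) However, the residue bookkeeping at the heart of your argument is wrong as written, in three places, and the final identity only comes out because you insert a compensating step that is itself invalid.

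Concretely: (1) $\operatorname{Res}_{w=-m}d\log\Gamma(w)=-1$, so $d\log M^0$ has residue $-1$ (not $+1$) at each pole contributed by a numerator factor and residue $+1$ (not $-1$) at each pole of $\Gamma(1-\gamma z)$; equivalently $\operatorname{Res}_{z_i}d\log M^0=\operatorname{ord}_{z_i}M^0$. This alone is the source of the minus sign in the statement; no further correction is needed or available. (2) $\Gamma(1-\gamma z)$ has its poles where $1-\gamma z\in\bZ_{\le 0}$, i.e.\ $\gamma z\in\bZ_{\ge 1}$, i.e.\ $z=\tfrac1\gamma,\tfrac2\gamma,\dots$; with your placement ($\gamma z\in\bZ_{\le 0}$) none of these points lies in the range $\tfrac1\gamma\le z_i\le\tfrac{k+1}\gamma$, the term $-(k+1)$ disappears, and the count fails. (3) The ``sign introduced by the reflection $z\mapsto 1-\gamma z$'' that you use to flip the total is spurious: residues of a logarithmic differential are invariant under any holomorphic change of coordinate, since they equal the local order of the function. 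With the correct residues and pole locations the computation is direct: in the shifted coordinate $\zeta=z-\tfrac1\gamma$ the numerator poles with $0\le\zeta\le\tfrac{k}{\gamma}$ number $\sum_q\bigl(1+[\tfrac{kB_q}{\gamma}]\bigr)$ (using $kB_q\not\equiv 0\bmod\gamma$ for $k\in(\bZ/\gamma)^0$), the denominator poles number $k+1$ --- note that the right endpoint $z=\tfrac{k+1}{\gamma}$ must be included, so the range is not ``equivalently $[0,\tfrac{k}{\gamma})$'' as you assert --- and hence $-\sum_i\operatorname{Res}_{z=z_i}d\log M^0=\sum_q\bigl(1+[\tfrac{kB_q}{\gamma}]\bigr)-(k+1)=p(k)$, as required.
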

This means that we can read off the Hodge filtration of  $W_{n+1}(H^{n}(\bT^n \setminus Z_f)$ (equivalently  that of $W_{n-1}( H^{n-1}(Z_f))$ or $PH^{n-1}(\bar Z_f) $ ) from the poles of 
$ \frac{d log M^0_{u x^{\alpha(n+2)}} (\gamma z)}{dz}.$

\par Further we examine the kernel of the operator 
${\bar R}_{(1,0)}(t, \vartheta_t),$ (\ref{R10t})
and monodromy actions on it.
We define  characteristic polynomials
of the local monodromy of $Ker\; {\bar R}_{(1,0)}(t, \vartheta_t)$
at $t=\infty$ (an anticlockwise turn around $t=\infty$ )
\begin{equation}
{\bar X}_{\infty}({\mathsf t})=
\prod_{\alpha^- \in C^-(1,0) \setminus C^0(1,0)}
({\mathsf t}-e^{2\pi \sqrt -1 \alpha^-}) = \frac{\prod_{q=1}^{n+1}
({\mathsf t}^{B_q}-1) }{\varphi( {\mathsf t})  }.
\label{Xinf}
\end{equation}
at $t=0$ (an anticlockwise turn around  $t=0$)
\begin{equation}
{\bar X}_{0}({\mathsf t})=
\prod_{\alpha^+ \in C^+\setminus C^0(1,0)}
({\mathsf t}-e^{2\pi \sqrt -1 \alpha^+})  = \frac{({\mathsf t}^{\gamma}-1)}{\varphi( {\mathsf t})  } .
\label{X0}
\end{equation}
Here  \begin{equation}
 \varphi( {\mathsf t}) = h.c.f. ( \prod_{q=1}^{n+1}
({\mathsf t}^{B_q}-1) , ({\mathsf t}^{\gamma}-1)   ),
\label{varphi0}
\end{equation}
a polynomial of degree $\gamma - \bar \gamma.$
Thus we have $deg {\bar X}_{\infty}({\mathsf t})= deg {\bar X}_{0}({\mathsf t})= \bar \gamma$
in view of Proposition \ref{prop61}.

\begin{proposition}
The two degree $\bar \gamma$ polynomials $ {\bar X}_{0}({\mathsf t})$, ${\bar X}_{\infty}({\mathsf t})$ have integer coefficients.
More precisely $$ {\bar X}_{0}({\mathsf t}) \in \bQ(e^{2\pi \sqrt -1 /\gamma})[{\mathsf t}] \cap \bZ[{\mathsf t}], \;\;\; {\bar X}_{\infty}({\mathsf t}) \in \bQ(e^{2\pi \sqrt -1 /B_1}, \cdots , e^{2\pi \sqrt -1 /B_{n+1}})[{\mathsf t}] \cap \bZ[{\mathsf t}].$$
\label{realc}
\end{proposition}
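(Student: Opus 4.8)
The statement is that the two characteristic polynomials $\bar X_0(\mathsf t)$ and $\bar X_\infty(\mathsf t)$, which by construction factor over a cyclotomic field, actually have integer coefficients. The plan is to deduce this from the fact that both polynomials are, up to the common factor $\varphi(\mathsf t)$, quotients of explicit products of $(\mathsf t^m-1)$, together with the classical fact that $\mathsf t^m-1$ factors over $\bZ$ into cyclotomic polynomials $\Phi_d(\mathsf t)$, each of which lies in $\bZ[\mathsf t]$ and is irreducible over $\bQ$.

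First I would record the prime factorizations into cyclotomic polynomials: $\mathsf t^\gamma - 1 = \prod_{d \mid \gamma}\Phi_d(\mathsf t)$ and $\prod_{q=1}^{n+1}(\mathsf t^{B_q}-1) = \prod_{q=1}^{n+1}\prod_{d \mid B_q}\Phi_d(\mathsf t)$, so the right-hand product is $\prod_d \Phi_d(\mathsf t)^{m(d)}$ where $m(d) = \#\{q : d \mid B_q\}$. Next, since $\bQ[\mathsf t]$ is a UFD and the $\Phi_d$ are the distinct irreducible factors appearing, the gcd $\varphi(\mathsf t) = \mathrm{h.c.f.}\bigl(\prod_q(\mathsf t^{B_q}-1), \mathsf t^\gamma-1\bigr)$ is (up to the unit normalization, which is monic here) exactly $\prod_{d \mid \gamma,\ m(d)\ge 1}\Phi_d(\mathsf t)$ — each such $\Phi_d$ divides $\mathsf t^\gamma-1$ since $d \mid \gamma$, and it divides the $B_q$-product to multiplicity $\ge 1$, so it contributes to the gcd to multiplicity exactly $\min(1, m(d)) = 1$; conversely any $\Phi_d$ with $d \nmid \gamma$ or $m(d)=0$ divides at most one of the two. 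Therefore $\bar X_0(\mathsf t) = (\mathsf t^\gamma-1)/\varphi(\mathsf t) = \prod_{d \mid \gamma,\ m(d)=0}\Phi_d(\mathsf t)$ and $\bar X_\infty(\mathsf t) = \prod_q(\mathsf t^{B_q}-1)/\varphi(\mathsf t) = \prod_{d : m(d)\ge 1}\Phi_d(\mathsf t)^{m(d)-1}\cdot\prod_{d \nmid \gamma,\ m(d)\ge 1}\Phi_d(\mathsf t)$; in every case these are products of cyclotomic polynomials, hence monic polynomials with integer coefficients. That these quotients are genuinely polynomials (no denominator survives) is the content of the divisibility just checked, and it is consistent with the already-proved degree count $\deg \bar X_0 = \deg\bar X_\infty = \bar\gamma$ from Proposition \ref{prop61}.

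For the finer membership claim, I would argue as follows: each root $e^{2\pi\sqrt{-1}\alpha^+}$ of $\bar X_0$ has $\alpha^+ \in C^+ \subset \tfrac1\gamma\bZ$, so every root is a $\gamma$-th root of unity and hence $\bar X_0(\mathsf t) \in \bQ(e^{2\pi\sqrt{-1}/\gamma})[\mathsf t]$; combined with $\bar X_0 \in \bZ[\mathsf t]$ this gives the asserted intersection. Similarly each root of $\bar X_\infty$ is $e^{2\pi\sqrt{-1}\alpha^-}$ with $\alpha^- \in C^-(1,0)$, and by the definition (\ref{C10}) of $C^-(1,0)$ each such $\alpha^-$ is a fraction with denominator dividing some $B_q$, so the roots lie in $\bQ(e^{2\pi\sqrt{-1}/B_1},\dots,e^{2\pi\sqrt{-1}/B_{n+1}})$, giving the second intersection.

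\textbf{Main obstacle.} The only non-formal point is verifying that $\varphi(\mathsf t)$ divides both $\mathsf t^\gamma-1$ and $\prod_q(\mathsf t^{B_q}-1)$ \emph{with quotient in $\bZ[\mathsf t]$} — i.e. that the gcd picks up each relevant cyclotomic factor to exactly the right multiplicity (multiplicity one on the $\gamma$-side, multiplicity $\min$ overall). This is a routine unique-factorization bookkeeping once the cyclotomic factorizations are written out, but it must be done carefully because $\prod_q(\mathsf t^{B_q}-1)$ is \emph{not} squarefree in general (a given $\Phi_d$ can occur with multiplicity $m(d)>1$), so one must track multiplicities rather than just supports of root sets. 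I expect no genuine difficulty beyond this combinatorial care.
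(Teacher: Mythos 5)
Your proof is correct, and it takes a different route from the paper. The paper does not factor into irreducible cyclotomic polynomials at all: it first proves Lemma \ref{cyclotomic}, an elementary statement that a quotient $P_0(\mathsf t)/Q_0(\mathsf t)$ of products of binomials $(\mathsf t^{m}-1)$ lies in $\bZ[\mathsf t]$ whenever it is a polynomial (proved by expanding $1/(1-\mathsf t^{B_i})$ as a geometric series with integer coefficients), and then writes the gcd $\varphi(\mathsf t)$ of (\ref{varphi0}) explicitly, via an inclusion--exclusion product (\ref{varphi}) over subsets $\bk \subset \{1,\dots,n+1\}$ of factors built from the binomials $\mathsf t^{C^{(r)}}-1$ with $C^{(r)}$ iterated h.c.f.'s of the $B_q$ and $\gamma$, so that $\bar X_0$ and $\bar X_\infty$ become quotients of products of binomials to which the lemma applies. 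You instead invoke the standard factorizations $\mathsf t^\gamma-1=\prod_{d\mid\gamma}\Phi_d(\mathsf t)$ and $\prod_q(\mathsf t^{B_q}-1)=\prod_d\Phi_d(\mathsf t)^{m(d)}$ with $m(d)=\#\{q: d\mid B_q\}$, compute $\varphi(\mathsf t)=\prod_{d\mid\gamma,\,m(d)\ge 1}\Phi_d(\mathsf t)$ by unique factorization in $\bQ[\mathsf t]$ (taking the correct multiplicity $\min(1,m(d))$, which is exactly where the non-squarefreeness of the $B_q$-product must be, and is, tracked), and conclude that both quotients are products of the monic integer polynomials $\Phi_d$; the field memberships then follow because the roots are $\gamma$-th roots of unity, respectively $B_q$-th roots of unity. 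Your computation of the exponents in $\bar X_\infty$ (namely $m(d)-1$ for $d\mid\gamma$ and $m(d)$ for $d\nmid\gamma$) is right, and in fact only pairwise coprimality and integrality of the $\Phi_d$ are needed, not their irreducibility. What your route buys is a cleaner and more standard argument that sidesteps the somewhat delicate multiplicity bookkeeping behind formula (\ref{varphi}); what the paper's route buys is a closed product formula for $\varphi(\mathsf t)$ in terms of the h.c.f.'s of the weights, which has independent interest (compare the Poincar\'e series identity (\ref{poincare})) and keeps the argument entirely at the level of binomial factors.
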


To prove this Proposition, we prepare the following lemma.
\begin{lem}
Consider two cyclotomic polynomials $P_0({\mathsf t})= \prod_{i=1}^p  ({\mathsf t}^{A_i}-1)$ and $Q_0({\mathsf t}) = \prod_{j=1}^q  ({\mathsf t}^{B_j}-1)$ such that the multi-set of roots of $Q_0({\mathsf t})$ is contained in the multi-set of roots of
 $P_0({\mathsf t})$. Then the rational function $P_0({\mathsf t})/Q_0({\mathsf t})$ is a polynomial with integer coefficients.
\label{cyclotomic}
\end{lem}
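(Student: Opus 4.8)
The plan is to reduce everything to the classical fact that each individual $\mathsf{t}^n-1$ factors over $\mathbb{Z}$ as $\prod_{d\mid n}\Phi_d(\mathsf{t})$, where $\Phi_d$ is the $d$-th cyclotomic polynomial, and that the $\Phi_d$ are pairwise coprime irreducible elements of $\mathbb{Z}[\mathsf{t}]$. Thus one first rewrites
\[
P_0(\mathsf{t})=\prod_{i=1}^{p}(\mathsf{t}^{A_i}-1)=\prod_{d\ge 1}\Phi_d(\mathsf{t})^{a_d},
\qquad
Q_0(\mathsf{t})=\prod_{j=1}^{q}(\mathsf{t}^{B_j}-1)=\prod_{d\ge 1}\Phi_d(\mathsf{t})^{b_d},
\]
with $a_d=\#\{i: d\mid A_i\}$ and $b_d=\#\{j: d\mid B_j\}$ (finite products, since $a_d=b_d=0$ once $d$ exceeds $\max A_i$, resp. $\max B_j$).

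Next I would translate the hypothesis "the multi-set of roots of $Q_0$ is contained in the multi-set of roots of $P_0$" into the statement $b_d\le a_d$ for every $d\ge 1$. This is where the only real content lies: a primitive $d$-th root of unity $\zeta_d$ occurs in $Q_0$ with multiplicity exactly $b_d$ (it is a root of $\mathsf{t}^{B_j}-1$ iff $d\mid B_j$, each such factor contributing a simple zero at $\zeta_d$ since $\mathsf{t}^{B_j}-1$ has distinct roots), and likewise with multiplicity $a_d$ in $P_0$; multi-set containment at the specific point $\zeta_d$ then forces $b_d\le a_d$. Since this holds for all $d$, the quotient is
\[
\frac{P_0(\mathsf{t})}{Q_0(\mathsf{t})}=\prod_{d\ge 1}\Phi_d(\mathsf{t})^{a_d-b_d}
\]
with all exponents $a_d-b_d\ge 0$, hence a genuine product in $\mathbb{Z}[\mathsf{t}]$ (each $\Phi_d\in\mathbb{Z}[\mathsf{t}]$ is monic), so the quotient lies in $\mathbb{Z}[\mathsf{t}]$ as claimed.

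The main obstacle — really the only subtlety — is the bookkeeping of multiplicities: one must be careful that each $\mathsf{t}^{A_i}-1$ is \emph{squarefree}, so that the multiplicity of a root $\zeta$ of $P_0$ is honestly the number of indices $i$ with $\zeta^{A_i}=1$, and similarly for $Q_0$; otherwise "multi-set containment" would not translate cleanly into $b_d\le a_d$. Once that is noted, the rest is the standard cyclotomic factorization and the fact that $\mathbb{Z}[\mathsf{t}]$ is a UFD in which the $\Phi_d$ are non-associate primes. I would remark in passing that the same argument shows $P_0/Q_0$ is again a product of polynomials of the form $\mathsf{t}^{m}-1$ up to units only in special cases, but this stronger statement is not needed: integrality of the coefficients is exactly what is used in Proposition \ref{realc}, applied with $\{A_i\}=\{\gamma\}$ or $\{A_i\}=\{B_1,\dots,B_{n+1}\}$ and $\{B_j\}$ the multiset of exponents cut out by $\varphi(\mathsf{t})$.
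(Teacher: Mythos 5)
Your argument is correct, but it is not the route the paper takes. You factor both products into cyclotomic polynomials $\Phi_d$, translate the multi-set hypothesis into $b_d\le a_d$ for every $d$ (using that each $\mathsf{t}^{A_i}-1$ is squarefree, so multiplicities are honest counts of indices), and conclude that $P_0/Q_0=\prod_d\Phi_d^{a_d-b_d}$ is a product of monic integer polynomials. The paper instead treats the polynomiality of $P_0/Q_0$ as immediate from the hypothesis (the same multiplicity count you make explicit) and proves integrality by a generating-function trick: for $|\mathsf{t}|<1$ it expands each factor of the denominator via $1/(1-\mathsf{t}^{B_j})=\sum_{m\ge 0}\mathsf{t}^{mB_j}$, so $P_0/Q_0$ becomes a convergent power series with integer coefficients; since that series equals a polynomial, the polynomial has integer coefficients. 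The paper's proof is more elementary, needing neither the factorization $\mathsf{t}^n-1=\prod_{d\mid n}\Phi_d(\mathsf{t})$ nor integrality or irreducibility of the $\Phi_d$ (your appeal to non-associate primes in the UFD $\bZ[\mathsf{t}]$ invokes irreducibility, which is a nontrivial classical fact; you could soften this by matching multiplicities in $\bC[\mathsf{t}]$ and using only that the $\Phi_d$ are monic with integer coefficients). Your route buys more structure in return: an explicit cyclotomic factorization of the quotient and its monicity. One small caveat: your closing remark about the application is not quite how Proposition \ref{realc} uses the lemma, since $\varphi(\mathsf{t})$ is an h.c.f.\ and not literally a product of binomials $\mathsf{t}^{B_j}-1$; the paper routes the application through the auxiliary polynomials $\varphi_{\bk}$ built from the exponents $C^{(r)}$. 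This does not affect the correctness of your proof of the lemma itself.
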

\begin{proof}
It is clear that $P_0({\mathsf t})/Q_0({\mathsf t})$ is a polynomial.
 We consider the function $P_0({\mathsf t})/Q_0({\mathsf t})$ for $|t| <1$ and expand the denominator into a convergent power series in making use of the relation
$$ \frac{1}{1-{\mathsf t}^{B_i}} = \sum_{m=0}^\infty {\mathsf t}^{m B_i}.$$
The obtained convergent power series expression of  $P_0({\mathsf t})/Q_0({\mathsf t})$ has integer coefficients. But, in fact, it is a polynomial so the power series breaks down within a finite number of terms.
\end{proof}

\begin{proof} (of Proposition ~\ref{realc}) 

The proof is reduced to a precise formula that can be established by induction.

For an ordered set 
\begin{equation}
\bk = \{ q_1 , \cdots ,q_k \} \subset \{1, \cdots ,n+1\} 
\label{bk}
\end{equation}
 we introduce a rational function
$$   \varphi_\bk( {\mathsf t}) =\prod_{r=1}^{|\bk|} \left( \frac{  ( {\mathsf t}^{C^{(r)}_{q_1,\cdots, q_r}}-1)   }  {( {\mathsf t}-1)}\right)^{(-1)^{r-1}}, \;\; 1 \leq k= |\bk| \leq n+1,$$
that is in fact a polynomial from $\bZ[\t]$ due to Lemma \ref{cyclotomic}.
Here  the exponents shall be interpreted as follows: $C^{(1)}_{q} = h.c.f. (B_q, \gamma)$ and ${C^{(r)}_{q_1,\cdots, q_r}} = h.c.f.(C^{(1)}_{q_1},\cdots, C^{(1)}_{q_r} ),  $ for $r=2, \cdots, n+1. $
We shall remark that $ C^{(n+1)}_{1,\cdots, n+1} =1$ by assumption   (\ref{gcdB}).
We then have a formula  for the polynomial $\varphi( {\mathsf t}),$ (\ref{varphi0}) 
\begin{equation}
 \varphi( {\mathsf t}) = ( {\mathsf t}-1) \prod_{\bk} \varphi_\bk( {\mathsf t}).
\label{varphi}
\end{equation}
where the index $\bk$ runs over all ordered sets  (\ref{bk}) such that  $\{\t ; \varphi_\bk( {\mathsf t})=0\}\cap 
\{\t ; \varphi_{\bk'}( {\mathsf t})=0\} = \emptyset \iff \bk \not = \bk'.$
We apply  Lemma \ref{cyclotomic} to (\ref{Xinf}), (\ref{X0}), (\ref{varphi0}),  in taking into account the formula (\ref{varphi}). 


\end{proof}

For the polynomials introduced in (\ref{Xinf}), (\ref{X0}),
we define two vectors $(\cal A_1, \cal A_2, \cdots, \cal A_{\bar \gamma}),$
$ (\cal B_1, \cal B_2,$ $\cdots,$ $\cal B_{\bar \gamma})$
$\in \bZ^{\bar \gamma}, $
after the following relations:
$${\bar X}_{\infty}({\mathsf t})={\mathsf t}^{\bar \gamma}+
{\cal A}_1{\mathsf t}^{\bar \gamma-1}+
\cal A_2{\mathsf t}^{\bar \gamma-2}+ \cdots + \cal A_{\bar \gamma},$$
$${\bar X}_{0}({\mathsf t})={\mathsf t}^{\bar \gamma}+
\cal B_1{\mathsf t}^{\bar \gamma-1}+
\cal B_2{\mathsf t}^{\bar \gamma-2}+ \cdots + \cal B_{\bar \gamma}.$$

An examination of the elements of $\alpha^- \in C^-(1,0) \setminus C^0(1,0)$ leads us to conclude that  $X_{\infty}({\mathsf t})$ is a product of $({\mathsf t}-1)^{n}$
and a factor vanishing on a set of non-real complex numbers with rational arguments symmetrically located with respect to the real axis.  This means that  $\cal A_{\bar \gamma} =(-1)^n.$
From the symmetry of the  set $(C^+\setminus C^0(1,0) )\setminus \{1/2\}$ with respect to $1/2$ it follows that $\cal B_{\bar \gamma}=1. $
In other words, for every $r \leq \frac{\sharp |(C^+\setminus C^0(1,0) )\setminus \{1/2\}|}{2} $ and index set ${\mathcal I}(r) \subset \{1, \cdots, \sharp |(C^+\setminus C^0(1,0) )\setminus \{1/2\}| \}$
such that $\sharp {\mathcal I}(r)=r,$ we can find an unique index set   ${\mathcal I'}(r) \subset \{1, \cdots, \sharp |(C^+\setminus C^0(1,0) )\setminus \{1/2\}| \} \setminus {\mathcal I}(r)$, $\sharp {\mathcal I'}(r)=r$ so that
\begin{equation}
\sum_{ j \in {\mathcal I}(r)} \alpha_j^+  + \sum_{ j' \in {\mathcal I'}(r)} \alpha_{j'}^+ =r.
\label{asym}
\end{equation}

A theorem due to A.H.M. Levelt (see \cite{Lev}, \cite{BH}) tells us that the global monodromy representation of the solution space
$ Ker \; {\bar R}_{(1,0)}(t, \vartheta_t)$ with irreducible monodromy can be recovered from polynomials  (\ref{Xinf}),(\ref{X0}).

\begin{lem}
The global monodromy group ${\bar H}_{\gamma, \bB}$ of the  solution space
$ Ker \; {\bar R}_{(1,0)}(t, \vartheta_t)$ is generated by
\begin{equation}
 h_\infty=
\left(
\begin{array}{llccll}
0 & 0 & \cdots &0  & (- 1)^{n+1} \\
1 & 0 &  \ddots &0 &-\cal A_{\bar \gamma-1} \\
0 &1 & \ddots  &0&-\cal A_{\bar \gamma-2} \\
\vdots &\ddots  & \ddots &\vdots &\vdots\\
0 & 0 & \cdots &  1 &-\cal A_1 \\
\end{array} \right),
\label{hinf}
\end{equation}
\begin{equation}
 (h_0)^{-1}=
\left(
\begin{array}{llccll}
0 & 0 & \cdots &0 & -1 \\
1 & 0 &  \ddots &0 &-\cal B_{\bar \gamma-1} \\
0 &1 & \ddots  &0&-\cal B_{\bar \gamma-2} \\
\vdots &\ddots  & \ddots &\vdots &\vdots\\
0 & 0 & \cdots &  1 &-\cal B_1 \\
\end{array} \right).
\label{h0}
\end{equation}
 Here $h_0 \in GL(\bar \gamma, \bZ)$ (resp. $h_\infty \in GL(\bar \gamma, \bZ)$) corresponds to the monodromy action around a loop turning anticlockwise around $t=0$ (resp. $ t=\infty$). The monodromy action around a point $t  = 1$ is given by $h_1 = (h_0 h_\infty)^{-1}.$
\label{levelt}
\end{lem}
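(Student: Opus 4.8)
The plan is to invoke Levelt's theorem on rigid local systems (the "Riemann scheme determines the monodromy" principle for Pochhammer/hypergeometric equations), exactly in the form stated in \cite{Lev} and \cite[Theorem 3.5, Corollary 3.6]{BH}. The operator $\bar R_{(1,0)}(t,\vartheta_t)$ in (\ref{R10t}) is a Pochhammer hypergeometric operator of order $\bar\gamma$ with upper parameters $\{\alpha^+\}=C^+\setminus C^0(1,0)$ and lower parameters shifted by $1$ from $\{\alpha^-\}=C^-(1,0)\setminus C^0(1,0)$; by construction (the subtraction of the common multi-set $C^0(1,0)$, cf.\ \cite[Proposition 2.7, Corollary 2.6]{BH}) these two parameter sets are \emph{disjoint} modulo $\bZ$, which is precisely the condition guaranteeing that $\mathrm{Ker}\,\bar R_{(1,0)}(t,\vartheta_t)$ carries an irreducible — hence rigid — monodromy representation. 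Therefore the global monodromy is determined up to simultaneous conjugation by the characteristic polynomials of $h_0$ and $h_\infty$, which are exactly $\bar X_0(\mathsf t)$ and $\bar X_\infty(\mathsf t)$ from (\ref{X0}), (\ref{Xinf}).

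The steps I would carry out are: \textbf{(1)} Identify $h_\infty$ as the monodromy at $t=\infty$. For a hypergeometric operator the local exponents at $\infty$ are the $\alpha^-$ (the roots of $\bar X_\infty$), and the local monodromy there is the companion matrix of $\bar X_\infty(\mathsf t)$; writing that companion matrix out with $\bar X_\infty(\mathsf t)=\mathsf t^{\bar\gamma}+\mathcal A_1\mathsf t^{\bar\gamma-1}+\cdots+\mathcal A_{\bar\gamma}$ gives (\ref{hinf}), using the already-established fact $\mathcal A_{\bar\gamma}=(-1)^n$ to fill in the top-right entry as $(-1)^{n+1}$ after the sign convention of the companion form (the constant term of the characteristic polynomial is $(-1)^{\bar\gamma}\det h_\infty$; with $\bar\gamma$ and $n$ of opposite parity, $\det h_\infty=(-1)^{n+1}$, consistent with the stated entry). \textbf{(2)} Symmetrically, the local monodromy $h_0$ at $t=0$ has local exponents $\alpha^+$ (roots of $\bar X_0$), so $h_0^{-1}$ is the companion matrix of $\bar X_0(\mathsf t)=\mathsf t^{\bar\gamma}+\mathcal B_1\mathsf t^{\bar\gamma-1}+\cdots+\mathcal B_{\bar\gamma}$ with $\mathcal B_{\bar\gamma}=1$, giving (\ref{h0}). \textbf{(3)} By Levelt's rigidity, any representation of $\pi_1(\mathbb P^1\setminus\{0,1,\infty\})$ with these prescribed local data at $0$ and $\infty$ and such that the local monodromy at $1$ is a (pseudo-)reflection is unique up to conjugacy; since $h_0\cdot h_1\cdot h_\infty=\mathrm{id}$ in $\pi_1$ (suitably oriented), one sets $h_1=(h_0h_\infty)^{-1}$, and one checks $h_1-\mathrm{id}$ has rank one, which is automatic for the hypergeometric case because the two parameter multi-sets are disjoint mod $\bZ$. \textbf{(4)} Finally, the integrality of $\mathcal A_i,\mathcal B_i$ (hence that (\ref{hinf}), (\ref{h0}) are genuine integer matrices defining a $\bZ$-local system) is Proposition \ref{realc}, already proved.

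The main obstacle is purely bookkeeping rather than conceptual: matching the sign and transpose conventions so that the companion matrices come out in the exact shape displayed, in particular pinning down the top-right entries $(-1)^{n+1}$ and $-1$ and verifying the parity statement $\bar\gamma\equiv n+1 \pmod 2$ that makes $\mathcal A_{\bar\gamma}=(-1)^n$ compatible with $\det h_\infty=(-1)^{n+1}$ (this parity follows from the symmetry of $C^+\setminus C^0(1,0)$ about $1/2$ together with the factorization $\bar X_\infty(\mathsf t)=(\mathsf t-1)^n\cdot(\text{non-real factor})$ noted just before the Lemma, since a polynomial with roots in complex-conjugate pairs off the real axis has even degree). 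Once the conventions are fixed, the content is entirely supplied by Levelt's theorem applied to the already-identified Riemann scheme; no further analytic input about the period integrals $I_{u^kx^{\bI},\t\delta}(s)$ is needed, since Theorem \ref{thm51} and Proposition \ref{prop61} have already reduced everything to the combinatorics of the parameter sets $C^{\pm}$.
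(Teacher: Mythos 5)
Your proposal follows essentially the same route as the paper, which offers no argument for this lemma beyond invoking Levelt's theorem in the Beukers--Heckman form: the generators are the companion matrices of $\bar X_\infty(\mathsf t)$ and $\bar X_0(\mathsf t)$, irreducibility coming from the disjointness (mod $\bZ$) of $C^+\setminus C^0(1,0)$ and $C^-(1,0)\setminus C^0(1,0)$ after removal of the common multi-set, and $h_1=(h_0h_\infty)^{-1}$ from the relation in $\pi_1(\mathbb P^1\setminus\{0,1,\infty\})$. One harmless slip: since $\bar X_\infty(\mathsf t)=(\mathsf t-1)^n\cdot(\text{even-degree factor with conjugate-pair roots})$, the parity is $\bar\gamma\equiv n\pmod 2$ rather than $n+1$, but this only affects your side consistency check on $\det h_\infty$, not the identification of the top-right entry as $-\mathcal A_{\bar\gamma}=(-1)^{n+1}$.
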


\begin{proposition}
For $u^\ell x^\bI \in R^+_f$ such that $\bar \lambda ( u^\ell x^\bI ) = \bar \lambda ( u^k  x^{k \alpha(n+2)})$ by \eqref{lambdauell}, 
\eqref{monomialrep},
we have the following relation between  corresponding  differential operators   (\ref{RkIt}):
$$ \bar R_{(\ell, \bI)} (t, \vartheta_t) = \bar R_{(k, k \alpha(n+2))} (t, \vartheta_t) =  \bar  R_{(1, 0) }(t, \vartheta_t + \frac{k}{\gamma}).$$
The  monodromy representation of the solution space to the equation
$$ \bar R_{(\ell, \bI)} (t, \vartheta_t)  u(t)=0$$
is equivalent to that for $  ker \bar  R_{(1, 0) } (t, \vartheta_t) $  up to exponent shifts
$$ \alpha^+ \rightarrow \alpha^+ + \frac{k}{\gamma}, \;\;\;  \alpha^- \rightarrow \alpha^- + \frac{k}{\gamma}.$$
\label{shift}
\end{proposition}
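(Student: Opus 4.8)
The plan is to prove the identity of differential operators first, and then deduce the monodromy statement as a formal consequence of the exponent-shift behaviour of Pochhammer hypergeometric operators. Recall from Proposition~\ref{prop61}, part 4), and its proof that the hypothesis $\bar\lambda(u^\ell x^\bI)=\bar\lambda(u^k x^{k\alpha(n+2)})$ means precisely that $u^\ell x^\bI\equiv \mathrm{const.}\,u^k x^{k\alpha(n+2)}$ in $R_f^+$, so that the fractional parts $\langle{\mathcal L}_q(\bI,0,\ell)\rangle$ and $\langle{\mathcal L}_q(k\alpha(n+2),0,k)\rangle$ coincide for all $q\in[1;n+1]$; moreover $\bar\lambda(\alpha(n+2),0,1)=(B_1/\gamma,\dots,B_{n+1}/\gamma)$ so that ${\mathcal L}_q(k\alpha(n+2),0,k)=kB_q/\gamma$ up to an integer. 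The first step is therefore to write out $\tilde R_{(\ell,\bI)}(t,\vartheta_t)=P(\gamma\vartheta_t)-tQ_{\ell,\bI}(\gamma\vartheta_t)$ from (\ref{RkIt}), (\ref{PkI}), (\ref{QkI}), and observe that the only dependence on $(\ell,\bI)$ sits inside the arguments ${\mathcal L}_q(\bI,-\gamma\vartheta_t,\ell)$ of the Pochhammer blocks $(\cdot)_{B_q}$. Since ${\mathcal L}_q$ is affine in its arguments, ${\mathcal L}_q(\bI,-\gamma\vartheta_t,\ell)={\mathcal L}_q(\bI,0,\ell)-B_q\vartheta_t$ (using that the $z$-coefficient of ${\mathcal L}_q$ is $-B_q/\gamma$ by (\ref{Lq})), and the reduction to $R_f^+$ changes ${\mathcal L}_q(\bI,0,\ell)$ only by an integer, which merely permutes the factors inside the product $\bigl(\cdot\bigr)_{B_q}$ over $j\in[0;B_q-1]$ — hence does not change the operator $Q$. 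This gives the first chain of equalities $\bar R_{(\ell,\bI)}=\bar R_{(k,k\alpha(n+2))}$.

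Next I would establish $\bar R_{(k,k\alpha(n+2))}(t,\vartheta_t)=\bar R_{(1,0)}(t,\vartheta_t+\tfrac{k}{\gamma})$. Here I compare the factorised forms (\ref{R10t}): $\bar R_{(1,0)}(t,\vartheta_t)=\prod_{\alpha^+\in C^+\setminus C^0(1,0)}(\vartheta_t+\alpha^+)-t\prod_{\alpha^-\in C^-(1,0)\setminus C^0(1,0)}(\vartheta_t+\alpha^-+1)$. From the computation in the proof of Proposition~\ref{prop61}, part 4), the multi-set $C^-(\ell,\bI)$ is exactly the $\tfrac{k}{\gamma}$-shift of $C^-(1,0)$, and likewise $C^+$ as the pull-back exponent set shifts by $\tfrac{k}{\gamma}$ when one replaces $\vartheta_t$ by $\vartheta_t+\tfrac{k}{\gamma}$ (equivalently, multiplies a solution by $t^{-k/\gamma}$); the intersection $C^0$ shifts compatibly, so $C^{\pm}(\ell,\bI)\setminus C^0(\ell,\bI)$ is the $\tfrac{k}{\gamma}$-translate of $C^{\pm}(1,0)\setminus C^0(1,0)$. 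Substituting $\vartheta_t\mapsto\vartheta_t+\tfrac{k}{\gamma}$ into each linear factor $(\vartheta_t+\alpha^+)$ turns it into $(\vartheta_t+\alpha^++\tfrac{k}{\gamma})$, and similarly on the $\alpha^-$ side, which matches the operator built from the shifted exponent sets. One small point to be careful about: the conjugation $t^{\mu}\circ\vartheta_t\circ t^{-\mu}=\vartheta_t-\mu$ means $\bar R_{(1,0)}(t,\vartheta_t+\tfrac{k}{\gamma})$ annihilates exactly $t^{-k/\gamma}\cdot\mathrm{Ker}\,\bar R_{(1,0)}(t,\vartheta_t)$, which is the clean way to phrase the last assertion.

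Finally, for the monodromy statement: a multiplication of the solution space by the multivalued factor $t^{-k/\gamma}$ (equivalently $t^{k/\gamma}$) is an isomorphism of local systems on $\mathbb{C}\setminus\{0,1\}$ up to tensoring by the rank-one local system with monodromy $e^{\mp 2\pi\sqrt{-1}k/\gamma}$ around $t=0$ (and trivial around $t=1$, since $k/\gamma\in\bQ$ and the $t=1$ singularity is unaffected by a power of $t$). Concretely, if $M(t)$ solves $\bar R_{(1,0)}(t,\vartheta_t)M=0$ then $t^{-k/\gamma}M(t)$ solves $\bar R_{(k,k\alpha(n+2))}(t,\vartheta_t)u=0$, and the monodromy matrices in a corresponding basis are related by the scalar $e^{-2\pi\sqrt{-1}k/\gamma}$ at $t=0$, $e^{+2\pi\sqrt{-1}k/\gamma}$ (absorbed) around $t=\infty$, and identically at $t=1$ — equivalently the exponents at $0$ and $\infty$ (the $\alpha^+$ and the $\alpha^-$) all shift by $\tfrac{k}{\gamma}$, which is exactly the claimed equivalence ``up to exponent shifts''. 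The main obstacle is the bookkeeping in the second paragraph: one must check that passing to the quotient $R_f^+$ (i.e.\ replacing ${\mathcal L}_q(\bI,0,\ell)$ by its representative and invoking Lemma~\ref{lemma62}) really changes each ${\mathcal L}_q$ by an \emph{integer} and not merely modulo the ambiguities in the choice of monomial representative; this is guaranteed by the injectivity of $\bar\lambda|_{R_f^+}$ together with Corollary~\ref{cor32} and Lemma~\ref{lemma62}, exactly as used in Proposition~\ref{prop61}, so I would cite those and carry out the verification that the cyclic reordering of the Pochhammer factors leaves $Q_{k,\bI}$ invariant.
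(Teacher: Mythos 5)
Your proposal is correct and takes essentially the same route as the paper: the key input is precisely the computation from the proof of Proposition \ref{prop61}, 4) that $C^-(\ell,\bI)$ is the $\tfrac{k}{\gamma}$-shift of $C^-(1,0)$ (with $C^+$ invariant under that shift), which is exactly what the paper's one-line proof cites, and your $t^{-k/\gamma}$-conjugation argument is the standard way to make the ``exponent shift'' monodromy statement precise. One minor imprecision: when ${\mathcal L}_q(\bI,0,\ell)$ moves by an integer the unreduced block $Q_{\ell,\bI}$ is not literally unchanged---only the exponent multi-sets modulo $1$, and hence the reduced operator $\bar R_{(\ell,\bI)}$, are---but this does not affect your conclusion.
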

The proof follows from  the representation of the set $C^-(\ell, \bI)$ in this situation obtained in the proof of Proposition \ref{prop61}, 4).

Let us denote by
$\omega^i, i=0,1,2, \cdots,\gamma -1,$
the non-zero singular points of the equation (\ref{Sloci}).
\begin{thm}
There is a $\bar \gamma$ dimensional subspace (i.e.  $Ker\; \bar R_{(1,0)}(s^\gamma, \vartheta_s/\gamma)$) of the solution space $ker R_{(1,0)}(s, \vartheta_s)$ ($k=1, \bI=0$ in  (\ref{RkI}) ) 
whose  global monodromy group ${\bar H}_{\gamma, \bB}$ is given by generators 
$$ 
M_{\omega^0} = h_1= (h_0 h_\infty)^{-1}, M_\infty =
h_\infty^{\gamma},
M_{\omega^i} = h_\infty^{-i}h_1 h_\infty^i (i=1,2, \cdots,\gamma -1),
$$
for the matrices  $h_0, h_\infty,  h_1$ defined in Lemma \ref{levelt}.
Here $M_{\omega^i}$ denotes the monodromy action around the point
$\omega^i \in  {\mathP}^1_s.$ 

In particular  ${\bar H}_{\gamma, \bB}$ is a discrete subgroup of $GL(\bar \gamma, \bZ)$ and $h_1^2 = id$ for $n:$ odd.

\label{thm53}
\end{thm}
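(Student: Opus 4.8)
The plan is to combine the Kummer covering relation $t = s^\gamma$ with the structure of the Pochhammer hypergeometric operator $\bar R_{(1,0)}(t,\vartheta_t)$ whose global monodromy group is already described in Lemma \ref{levelt}. First I would recall that by the construction of $R_{(1,0)}(s,\vartheta_s)$ as a pull-back of $\tilde R_{(1,0)}(t,\vartheta_t)$ under $t=s^\gamma$ (equation (\ref{RkIt}) and the discussion following Theorem after (\ref{RkI})), and by \cite[Corollary 2.6]{BH}, the kernel of $\bar R_{(1,0)}(s^\gamma,\vartheta_s/\gamma)$ sits inside $\ker R_{(1,0)}(s,\vartheta_s)$ as the $\bar\gamma$-dimensional subspace carrying irreducible monodromy; this is exactly the subspace asserted in the theorem. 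So the first step is to identify the ambient fundamental group: $\mathP^1_s$ minus the singular set (\ref{Sloci}), i.e. minus $\{0,\infty,\omega^0,\omega^1,\dots,\omega^{\gamma-1}\}$ where $\omega^i$ are the $\gamma$ nonzero roots of $(\prod_q B_q^{B_q})(s/\gamma)^\gamma = 1$.

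The second step is to transport the two Levelt generators $h_0,h_\infty$ of $\bar H_{\gamma,\bB}$ (acting on $\ker\bar R_{(1,0)}(t,\vartheta_t)$) through the covering $s\mapsto s^\gamma$. A small loop around $t=0$ lifts to a loop around $s=0$ traversed $1/\gamma$ of the way, so the monodromy of the pulled-back solutions around $s=0$ is $h_0$ itself — but the single singular point $t=1$ of the $t$-equation has $\gamma$ preimages $\omega^0,\dots,\omega^{\gamma-1}$, each a genuine singular point of the $s$-equation with local monodromy conjugate to $h_1 = (h_0 h_\infty)^{-1}$. Fixing $\omega^0$ as a base choice, the other preimages are obtained by the deck transformation $s\mapsto e^{2\pi i/\gamma}s$, which on solutions acts by $h_\infty$ (since circling $t=0$ once, i.e. $h_\infty^{-1}$ on the $t$-side combined with... — more precisely the covering identifies the deck group generator with the loop around $t=0$); hence $M_{\omega^i} = h_\infty^{-i} h_1 h_\infty^{i}$ for $i=1,\dots,\gamma-1$, and $M_{\omega^0}=h_1$. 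Finally, the loop around $s=\infty$ is the product of $\gamma$ small loops around $t=\infty$ in the lift, giving $M_\infty = h_\infty^\gamma$. One should verify consistency via the relation $M_{\omega^0}M_{\omega^1}\cdots M_{\omega^{\gamma-1}}M_0 M_\infty = \mathrm{id}$ in $\pi_1$, which reduces to $(h_0h_\infty)^{-\gamma}\cdot(\text{conjugates})\cdot h_0\cdot h_\infty^\gamma$ collapsing by a telescoping identity; this is the routine bookkeeping step.

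The remaining two assertions are short. That $\bar H_{\gamma,\bB}\subset GL(\bar\gamma,\bZ)$ follows immediately from Lemma \ref{levelt}: the companion-type matrices $h_\infty$ and $h_0^{-1}$ have integer entries because $\bar X_\infty(\mathsf t)$ and $\bar X_0(\mathsf t)$ lie in $\bZ[\mathsf t]$ by Proposition \ref{realc}, and $h_0$ is integral because its determinant (the constant term $\mathcal B_{\bar\gamma}=1$ noted just before Lemma \ref{levelt}) is a unit; all the generators $M_{\omega^i}, M_0, M_\infty$ are words in $h_0^{\pm1},h_\infty^{\pm1}$, hence integral. For $h_1^2=\mathrm{id}$ when $n$ is odd: $h_1$ is the local monodromy at a point over $t=1$, and by the standard Pochhammer/rigid-hypergeometric picture (\cite{BH}, or the asymptotic analysis of the Mellin–Barnes integral in Proposition \ref{prop31}) this monodromy is a (pseudo-)reflection whose nontrivial eigenvalue is $\det h_\infty/\det h_0 = \mathcal A_{\bar\gamma}/\mathcal B_{\bar\gamma} = (-1)^n$; for $n$ odd this eigenvalue is $-1$, so $h_1$ is an honest involution.

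The main obstacle I anticipate is the careful identification of the deck transformation action with $h_\infty$ and the correct placement of basepoints/paths so that the generators $M_{\omega^i}$ come out as the stated conjugates rather than conjugates by some other power or by a different element; getting the loop-composition conventions (orientation, order of multiplication, which loop is ``first'') consistent between the $t$-plane and the $s$-plane is the delicate point. Everything else — integrality, the reflection computation for odd $n$ — is a direct consequence of the already-established Lemma \ref{levelt} and Propositions \ref{prop61}, \ref{realc}.
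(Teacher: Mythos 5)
Your overall route is the same as the paper's: pull back along the Kummer covering $t=s^\gamma$, lift the Levelt generators of Lemma \ref{levelt} (the paper does this by a Reidemeister--Schreier argument), obtain $M_{\omega^i}=h_\infty^{-i}h_1h_\infty^{i}$ and $M_\infty=h_\infty^\gamma$, and get integrality from Proposition \ref{realc}. Your worry about the conjugating element is harmless: since $h_0h_\infty=h_1^{-1}$ commutes with $h_1$, conjugating $h_1$ by powers of $h_0^{-1}$ (the deck-transformation/loop around $t=0$ picture) gives the same elements as conjugating by powers of $h_\infty$, so the stated form of $M_{\omega^i}$ is recovered either way. Your treatment of $h_1^2=\mathrm{id}$ for $n$ odd (pseudo-reflection with nontrivial eigenvalue $\det h_1=(-1)^n$) is fine and is in fact more explicit than the paper, whose proof does not spell this point out.

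There is, however, one genuine error: your claim that ``the monodromy of the pulled-back solutions around $s=0$ is $h_0$ itself.'' A full loop around $s=0$ projects to $\gamma$ loops around $t=0$, so the local monodromy at $s=0$ is $M_0=h_0^{\gamma}$, not $h_0$. This is not a cosmetic slip: the theorem's generator list omits $s=0$, and the justification (made explicitly in the paper) is that $h_0^{\gamma}=\mathrm{id}$, because by (\ref{X0}) the characteristic polynomial ${\bar X}_{0}({\mathsf t})$ divides ${\mathsf t}^{\gamma}-1$ and has simple roots, so the companion matrix $h_0$ is semisimple with eigenvalues among the $\gamma$-th roots of unity; this also matches Theorem \ref{thm51}, 1), which says the period integrals are holomorphic at $s=0$. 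With your statement as written, $M_0$ would be a nontrivial local monodromy not accounted for by the listed generators (it is not obviously a word in $M_{\omega^i}$ and $M_\infty$ alone), so the completeness of the generating set --- and hence the description of ${\bar H}_{\gamma,\bB}$ as the global monodromy group on ${\mathP}^1_s$ minus the singular locus (\ref{Sloci}) --- would be left unproved. Correcting this step ($M_0=h_0^\gamma=\mathrm{id}$) closes the gap and also disposes of your consistency relation, in which $M_0$ can simply be dropped.
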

\begin{proof} The monodromies of the solutions annihilated by
$\bar R_{(1,0)}(t, \vartheta_t)$
are given by $h_0,$ (resp. $h_1, h_\infty$) after
Lemma \ref{levelt}
at $t=0,$ (resp.$t=1,\infty$).
Let us think of a $\gamma-$leaf covering
$\tilde {\mathP}^1_t$ of ${\mathP}^1_s$
that corresponds to the Kummer covering $s^{\gamma} =t.$

For the solution space $Ker\; {\bar R}_{(1,0)}(s^\gamma, \vartheta_s/\gamma),$
its monodromy can be described as follows.
In lifting up the path around $t=1,$ the first leaf of
$\tilde{\mathP}^1_s,$ the monodromy $h_1$ is sent
to the conjugation with a path around $t=\infty.$ That
is to say, we have $M_{\omega^1}= h_\infty^{-1}h_1 h_\infty.$
For other leaves, the argument is similar (Reidemeister-Schreier method).

The monodromy around $s=0$ would  be $h_0^\gamma,$ but in fact this is an identity matrix
in view of  (\ref{X0}). This fact matches Theorem \ref{thm51}, 1)
stating that all the period integrals (\ref{IukxI})  are holomorphic near $s=0.$

The statement that it is a discrete subgroup of $GL(\bar \gamma, \bZ)$ follows from Proposition \ref{realc}.
\end{proof}

An element $h$ of the monodromy group   ${\bar H}_{\gamma, \bB}$ acts naturally
on the space of ${\bar \gamma} \times {\bar \gamma}$-matrices by
$$
  { X} \mapsto h^T \cdot { X} \cdot \bar h,
$$
where $h^T$ is the transpose of $h$ and $\bar h$ the complex conjugate to $h$ .
The following is a corollary of Proposition \ref{hln} and Theorem \ref{thm53}:
\begin{cor}
There exists a non degenerate Hermitian invariant $\bar \FX$ such that 
$$ h^T \cdot {\bar \FX} \cdot  h ={\bar \FX}$$
for every $h \in  {\bar H}_{\gamma, \bB} \subset GL(\bar \gamma, \bZ).$

The signature $(\sigma^+, \sigma^-)$ of $\bar \FX$ is given by 1) $|\sigma^+- \sigma^-| =0$  for $n$ even, 2) 
$$|\sigma^+- \sigma^-|  =\bar \gamma$$
that is the index of the variety $\tau (\bar Z_f)$ for $n$ odd. 
\label{signature}
\end{cor}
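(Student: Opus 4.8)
The plan is to deduce the existence of the Hermitian invariant $\bar\FX$ from the structure of the monodromy group ${\bar H}_{\gamma,\bB}$ established in Theorem \ref{thm53} together with the Levelt normal form of Lemma \ref{levelt}. First I would recall that ${\bar R}_{(1,0)}(t,\vartheta_t)$ is an irreducible Pochhammer hypergeometric operator in the sense of \cite{BH}: its local exponents at $t=0$ and $t=\infty$ are the multi-sets $C^+\setminus C^0(1,0)$ and $C^-(1,0)\setminus C^0(1,0)$, and by Proposition \ref{prop61} these have no common element, which is precisely Beukers--Heckman's irreducibility criterion. By \cite[Theorem 3.5, Corollary 4.6]{BH}, an irreducible hypergeometric group whose defining polynomials ${\bar X}_0({\mathsf t})$, ${\bar X}_\infty({\mathsf t})$ are self-reciprocal up to sign (equivalently, the exponent sets are stable under $\alpha\mapsto -\alpha$ modulo $\bZ$) preserves a non-degenerate bilinear or Hermitian form on the $\bar\gamma$-dimensional solution space, unique up to scalar. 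The symmetry relation (\ref{asym}) for $C^+\setminus C^0(1,0)$ and the corresponding symmetry of $C^-(1,0)\setminus C^0(1,0)$ about the real axis (already used just above to compute $\cal A_{\bar\gamma}=(-1)^n$, $\cal B_{\bar\gamma}=1$) supply exactly this self-duality, so the invariant Hermitian form $\bar\FX$ exists. Since ${\bar H}_{\gamma,\bB}\subset GL(\bar\gamma,\bZ)$ by Theorem \ref{thm53} and $h_1^2=\mathrm{id}$ for $n$ odd, the form can be normalized to have rational (indeed integral, up to scaling) Gram matrix, and its non-degeneracy follows from irreducibility via Schur's lemma.

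For the signature computation I would use the standard recipe, going back to \cite{BH}, that expresses the signature of the invariant form of a hypergeometric group in terms of the interlacing of the arguments of the two exponent multi-sets on the unit circle. Concretely, writing $\alpha^+_1<\cdots<\alpha^+_{\bar\gamma}$ for the ordered elements of $C^+\setminus C^0(1,0)$ and $\alpha^-_1\le\cdots\le\alpha^-_{\bar\gamma}$ for those of $C^-(1,0)\setminus C^0(1,0)$, the signature defect $|\sigma^+-\sigma^-|$ is computed by a sum of signs $(-1)^{\#\{j:\alpha^-_j<\alpha^+_i\}}$, which is nothing but an alternating count governed by the function $p(k)$ of (\ref{pk}). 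By Proposition \ref{hln}, $\sharp\,p^{-1}(n-\ell)=h^{\ell,n+1-\ell}(PH^n(\bT^n\setminus Z_f))$, and passing through the residue isomorphism $res$ and (\ref{WjPH}), these numbers are the Hodge numbers $h^{\ell-1,n-\ell}$ of $W_{n-1}H^{n-1}(Z_f)\cong PH^{n-1}(\bar Z_f)$. Thus the signature-defect sum is exactly $\sum_\ell (-1)^{\ell-1}\dim Gr^{\ell-1}_F PH^{n-1}(\bar Z_f)$, i.e. $\sum_{p+q=n-1}(-1)^p h^{p,q}$, which for $n-1$ even is the Hirzebruch signature $\tau(\bar Z_f)$ and for $n-1$ odd vanishes because of the Hodge symmetry $h^{p,q}=h^{q,p}$. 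This gives case 1) $|\sigma^+-\sigma^-|=0$ for $n$ even and case 2) $|\sigma^+-\sigma^-|=\tau(\bar Z_f)$ for $n$ odd.

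The main obstacle I expect is bookkeeping the precise combinatorial identity between the Beukers--Heckman signature formula (an alternating sum over interlaced exponents on $S^1$) and the Hodge-theoretic alternating sum $\sum(-1)^p h^{p,q}$; one has to check that the multi-set intersections with $C^0(1,0)$ are removed consistently on both sides and that the shift by $1$ in the $res$-isomorphism (the shift $i\mapsto i-1$ in (\ref{WjPH}) and in the statement of $res$) is tracked correctly, so that no index is off by one. A secondary technical point is verifying that the form is genuinely \emph{Hermitian} (not merely a real bilinear form) in the way stated, i.e. that $h^T\cdot\bar\FX\cdot\bar h=\bar\FX$ holds with complex conjugation as written; this is automatic from the general theory because the hypergeometric parameters are roots of unity, but one should note that when all exponents are real — which never happens here for $\bar\gamma>1$ — it would reduce to a symmetric/antisymmetric bilinear form, and state clearly which alternative (\ref{asym}) selects. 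Once these two points are pinned down, the remaining steps — non-degeneracy from irreducibility, integrality from Theorem \ref{thm53}, and $h_1^2=\mathrm{id}$ for $n$ odd forcing the relevant parity — are immediate.
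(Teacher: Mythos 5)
Your proposal is correct and follows essentially the same route as the paper: existence of $\bar\FX$ via the Beukers--Heckman self-duality/invariant-form theorem using the symmetry (\ref{asym}) of the exponent sets, and the signature via the Beukers--Heckman interlacing formula (their Theorem 4.5) combined with Proposition \ref{hln} to rewrite $|\sigma^+-\sigma^-|$ as the alternating sum of Hodge numbers, concluding by Hodge symmetry $h^{p,q}=h^{q,p}$ for $n$ even and the definition of the index for $n$ odd. The only differences are cosmetic (which BH theorem is cited for existence, and your added remarks on irreducibility and Schur's lemma), so no further comparison is needed.
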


\begin{proof}

To see the existence of  a non degenerate Hermitian invariant $\bar \FX$ we apply  \cite[Theorem 4.3]{BH} to our situation. 
It would be enough to recall  the condition (\ref{asym}) for  $ {\bar X}_{0}({\mathsf t})$ and an analogous symmetry condition
for the roots of  $ {\bar X}_{\infty}({\mathsf t}).$
It is also possible to repeat the argument \cite[\S 3]{TaU13} that can be applied to our situation almost verbatim. 

In combining Proposition \ref{hln} formulated for the value  (\ref{pk})
and \cite[Theorem 4.5]{BH}, we see that the signature $(\sigma^+, \sigma^-)$ of the generating quadratic invariant is given by
$$  |\sigma^+- \sigma^-| =  \sum_{\ell =1}^{ n}  (-1)^\ell h^{\ell, n+1-\ell },$$
while $h^{n+1,0} = h^{0, n+1}=0,$ \cite[Proposition 5.3]{Baty1}.
The symmetry of Hodge numbers $h^{p,q} = h^{q,p}$ establishes the result 1).

As for result 2), it follows from  the definition of the index $\tau (\bar Z_f).$
In this case (the special eigenvalue of $h_1 =-1$), $\bar H_{\gamma,\bB}$ is  isomorphic to a subgroup of $O(\bar \gamma, \bC)$ 
up to  complex conjugate isomorphism. This result can be obtained 
by means of an application of \cite[Proposition 6.1]{BH} to our situation.
Again $\bar H_{\gamma,\bB}$ is self dual in the sense of  \cite{BH}, thanks to the condition (\ref{asym}) for  $ {\bar X}_{0}({\mathsf t})$ and an analogous symmetry condition for the roots of  $ {\bar X}_{\infty}({\t}).$
\end{proof}

\begin{remark}
 {\rm Corollary \ref{signature} means that,  for $n:$even  (the special eigenvalue of $h_1 =1$), the monodromy group $\bar H_{\gamma,\bB}$ is isomorphic to a subgroup of $O(\frac{\bar \gamma }{2}, \frac{\bar \gamma}{2} )$ up to real conjugate isomorphism. In other words, it is  isomorphic to a subgroup of $Sp(\frac{\bar \gamma }{2}, \bC)$ 
up to a complex conjugate isomorphism. }
\end{remark}

{
\center{\section{Weighted projective space $ {\mathbb P}_\bB$}\label{WHP}}
}

Let $\bN$ be the dual lattice to $\bM$ introduced in (\ref{Fpoly}).
We define the polar polyhedron $\Delta^o(f_0) \subset \bN_\R$ of the Newton polyhedron 
$\Delta(f_0)$
$$ \Delta^o(f_0) = \{v \in \bN_ \R; <v, \alpha> \geq -1,   \forall \alpha \in \Delta(f_0) \}.$$
\begin{lem}
We denote by $\FA_j$ the $j-$th column vector of the upper $n \times (n+2)$ part of the matrix $\sf L^{-1}$ inverse to  
$\sf L$    (\ref{L}).
 The polar polyhedron $\Delta^o(f_0)$ is represented as the convex hull of vectors
$$\{ \frac{\gamma \FA_1}{B_1}, \cdots,  \frac{\gamma \FA_{n+1}}{B_{n+1}}\}. $$
\label{polar}
\end{lem}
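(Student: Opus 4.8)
The plan is to show the two polyhedra coincide by a double inclusion, using the explicit linear algebra already set up in Proposition \ref{prop31} and Corollary \ref{cor32}. First I would recall the key facts: by Corollary \ref{cor32} the Newton polyhedron of $f$ is $\Delta(f) = \{\beta \in \bR^n; 0 \leq {\mathcal L}_q(\beta,0,1) \leq 1,\ q\in[1;n+1]\}$, where ${\mathcal L}_q(\beta,0,1) = \frac{<v_q,\beta> + C_q}{\gamma}$ with $C_q = \gamma \delta_{n+1,q}$; and the transition (\ref{f0}), (\ref{isopi}) from $f$ to $f_0$ is the shift $\alpha \mapsto \alpha - \alpha(n+2)$ on supports (taking $k=1$). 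Hence $\Delta(f_0)$ is the translate of $\Delta(f)$ by $-\alpha(n+2)$, so its facet inequalities are obtained by substituting $\beta \mapsto \beta + \alpha(n+2)$. Since $<v_q, \alpha(n+2)> = B_q$ by (\ref{Bq}), the facet inequalities for $\Delta(f_0)$ read $0 \le \frac{<v_q,\beta> + B_q + C_q}{\gamma} \le 1$ for $q\in[1;n]$ (so the inner facet passing nearest the origin is $<v_q,\beta> = -B_q$, i.e. $<\frac{v_q}{B_q},\beta> = -1$), and for $q=n+1$ the analogous normalization. The upshot: $\Delta(f_0) = \{\beta; <\frac{v_q}{B_q},\beta> \ge -1,\ q\in[1;n+1]\}$ after absorbing the constant.

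Next I would identify the vectors $\frac{v_q}{B_q}$ with $\frac{\gamma \FA_q}{B_q}$. By definition in Proposition \ref{prop31}, $w_q = (v_q, -B_q, C_q)/\gamma$ is the $q$-th column of $\sf L^{-1}$, so $\FA_q$ — the $q$-th column of the upper $n\times(n+2)$ block of $\sf L^{-1}$ — equals $v_q/\gamma$; thus $\frac{\gamma\FA_q}{B_q} = \frac{v_q}{B_q}$, exactly the facet normals of $\Delta(f_0)$ computed above. Now the defining property of the polar polyhedron is $\Delta^o(f_0) = \{v; <v,\alpha> \ge -1,\ \forall \alpha\in\Delta(f_0)\}$. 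Because $\Delta(f_0)$ is the $n$-simplex whose facet-supporting hyperplanes are precisely $<\frac{v_q}{B_q},\beta> = -1$ (and these are the only facets, there being $n+1$ of them for an $n$-simplex, matching $q\in[1;n+1]$), the classical duality for a simplex containing the origin in its interior gives that $\Delta^o(f_0)$ is the convex hull of the facet normals, i.e. of $\{\frac{v_1}{B_1},\dots,\frac{v_{n+1}}{B_{n+1}}\} = \{\frac{\gamma\FA_1}{B_1},\dots,\frac{\gamma\FA_{n+1}}{B_{n+1}}\}$.

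For the inclusion $\supseteq$: each $\frac{\gamma\FA_q}{B_q}$ lies in $\Delta^o(f_0)$ since for any $\beta\in\Delta(f_0)$ one has $<\frac{v_q}{B_q},\beta>\ge -1$ by the facet inequality; taking convex hulls, the whole hull lies in $\Delta^o(f_0)$ by convexity. For $\subseteq$: if $v\notin \mathrm{conv}\{\frac{v_q}{B_q}\}$, separate it by a hyperplane; since $0$ is interior to $\Delta(f_0)$ (as $\{0\} = \alpha(n+1) - \alpha(n+2)$... more precisely by the {\bf Assumption} the origin is interior to $\Delta(f_0)$), the separating direction gives a vertex $\beta$ of $\Delta(f_0)$ with $<v,\beta> < -1$, so $v\notin\Delta^o(f_0)$. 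Here I should double-check that $\Delta(f_0)$ is genuinely an $n$-simplex with exactly these $n+1$ facets — this follows from the {\bf Assumption} in \S\ref{simplicial} that $\{\alpha(i)\}_{i=1}^{n+1}$ spans an $n$-simplex containing $\alpha(n+2)$ in its interior, so $\Delta(f) = \mathrm{conv}\{\alpha(1),\dots,\alpha(n+1)\}$ and $\Delta(f_0)$ is its translate. The main obstacle I anticipate is bookkeeping the constant terms $C_q$ and the index $q=n+1$ carefully: the entries $C_{n+1}=\gamma$ versus $C_q=0$ for $q\le n$ shift which hyperplane is "the near one", and one must verify that after the translation by $-\alpha(n+2)$ all $n+1$ normalized facet equations come out uniformly as $<\frac{v_q}{B_q},\cdot> = -1$; the identity $-<v_{n+1},\alpha(n+2)> = \gamma - B_{n+1}$ from the proof of Lemma \ref{LqtildeLq}, together with $\sum_{q=1}^{n+1} v_q = 0$ from (\ref{Wqvq}), is what makes the $q=n+1$ case match the others.
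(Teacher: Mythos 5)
Your argument is correct and is essentially the paper's proof read dually: the paper simply records the pairings $\langle \alpha_0(i), \tfrac{\gamma \FA_j}{B_j}\rangle = -1+\tfrac{\gamma}{B_j}\delta_{ij}$ (immediate from ${\sf L}\,{\sf L}^{-1}=I$), which is exactly your statement that the vectors $\tfrac{\gamma\FA_q}{B_q}=\tfrac{v_q}{B_q}$ are the facet normals, normalized to level $-1$, of the translated simplex $\Delta(f_0)=\Delta(f)-\alpha(n+2)$ obtained from Corollary \ref{cor32}, after which your appeal to bipolarity for a simplex with the origin in its interior gives the same conclusion. Two cosmetic points only: the parenthetical ``$\{0\}=\alpha(n+1)-\alpha(n+2)$'' is a slip (the origin is interior to $\Delta(f_0)$ because it equals $\alpha(n+2)-\alpha(n+2)$ and $\alpha(n+2)$ is interior to $\Delta(f)$ by the {\bf Assumption}, as you then correctly say), and in the separation step the point $\beta$ you produce need not be a vertex of $\Delta(f_0)$ --- any point of $\Delta(f_0)$ with $\langle v,\beta\rangle<-1$ suffices.
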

The lemma can be seen from the following relations that hold for every $j \in [1;n+1]$,
$$ <\alpha_0(i), \frac{\gamma \FA_j}{B_j}> =  -1 + \frac{\gamma}{B_j} \delta_{i,j}, \forall i \in [1; n], \;\; <\alpha_0(n+1), \frac{\gamma \FA_j}{B_j}> =  -1+ \frac{\gamma}{B_{n+1}}.$$

The normal fan of $\Delta^o(f_0)$ is generated by cones over the proper faces of $\Delta(f_0)$ \cite[Lemma 3.2.1]{CK}.
Every cone of the interior point fan of $\Delta(f_0)$ is generated by $(n-k)-$tuples of $ \{\alpha_0(j)\}_{j=1}^{n+1}$ satisfying 
\begin{equation} \sum_{j=1}^{n+1} B_j \alpha_0(j) =0
\label{barycentre}
\end{equation} for $k \in [0;n-1].$ In fact, we have seen that $ \alpha(n+2)= \sum_{j=1}^{n+1} \frac{B_j \alpha(j)}{\gamma}$ during the proof of Proposition  \ref{prop61}, 2).

This means that the toric variety $\mathP_{\Delta^o(f_0)}$ is nothing but the weighted projective space $\mathP_{\bB}$ under the condition (\ref{gcdB}). 
It is known that the toric variety $\mathP_{\bB}$ is a Fano variety if and only if $ \frac{\gamma }{B_j} \in \Z, \forall j \in [1;n+1],$ \cite[Lemma 3.5.6]{CK}.
 Lemma \ref{polar} yields that if the $\Delta^o(f_0)$ is an integral polyhedron then the weighted projective space $\mathP_{\bB}$ is a Fano variety.
This means that  $\Delta(f_0)$  is a reflexive polytope. See \cite[Theorem 4.1.9]{Baty2}.

Now we examine the relation between the Stokes matrix for the oscillating integral (\ref{Jdgsu}), (\ref{Jug})
and the Gram matrix of the full exceptional collection on $\mathP_\bB.$
 
First we recall that the monodromy group $H_{\gamma, \bB}$ in   $GL(\gamma, \Z)$ of  $Ker \; \tilde R_{1,0} (t, \vartheta_t)$
(\ref{RkIt}) is
generated by two matrices (\cite[Theorem 1.1]{TaU13})
\begin{equation} \label{eq:hinfty} 
H_\infty
   = \begin{pmatrix}
      0 & 0 & \dots & 0 & (-1)^n \\
      1 & 0 & \dots & 0 & -\FB_{\gamma-1} \\
      0 & 1 & \dots & 0 & - \FB_{\gamma-2} \\
      \vdots & \vdots & \ddots & \vdots & \vdots \\
      0 & 0 & \dots & 1 & - \FB_1
     \end{pmatrix}  
\end{equation}
and
\begin{equation} \label{eq:h0}
 H_0^{-1}
   = \begin{pmatrix}
      0 & 0 & \dots & 0 & 1 \\
      1 & 0 & \dots & 0 & 0 \\
      0 & 1 & \dots & 0 & 0 \\
      \vdots & \vdots & \ddots & \vdots & \vdots \\
      0 & 0 & \dots & 1 & 0
     \end{pmatrix},
\end{equation}
where
\begin{equation} \label{eq:b}
 \prod_{q=1}^{n+1} (\t^{B_q} - 1)
  = \t^{\gamma} + \FB_1 \t^{\gamma-1} + \FB_2 \t^{\gamma-2} + \dots + (-1)^{n+1}
\end{equation}
is the characteristic polynomial
of the monodromy 
at infinity. It is worth noticing that $H_{\gamma, \bB}$ admits a reducible monodromy representation 
and the Levelt type theorem \cite[Theorem 3.5]{BH} cannot be directly applied to $\tilde R_{1,0} (t, \vartheta_t).$
The validity of the  monodromy representation (\ref{eq:hinfty}), (\ref{eq:h0}) is based on the existence of a vector $v$ that is cyclic with respect to $H_0$
satisfying
\begin{align} \label{eq:condition}
 H_0^i v = H_\infty^{-i} v, \qquad i \in [1; \gamma-1].
\end{align}
See \cite[Proposition 2.3]{TaU13}.

Let $(\mE_i)_{i=1}^\gamma$ be the full strong exceptional collection
on $D^b \coh \mathP_{\bB}$
given as
$$
 (\mE_1, \dots, \mE_\gamma) = (\mO, \dots, \mO(\gamma-1)),
$$
and $(\mF_1, \dots, \mF_\gamma)$ be
its right dual exceptional collection
characterised by the condition
$$
 \Ext^k(\mE_{\gamma-i+1}, \mF_j) = 
  \begin{cases}
   \C & i=j, \text{ and } k=0, \\
   0 & \text{otherwise}.
  \end{cases}
$$
In other words
$$
 \chi(\mE_{\gamma-i+1}, \mF_j)
  = \delta_{ij}
$$
where
\begin{equation} \label{eq:Euler}
 \chi(\mE, \mF)
  = \sum_k (-1)^k \dim \Ext^k (\mE, \mF)
\end{equation}
is the Euler form.
Note that
$
 \mF_1 = \mO_{\bP_{\bB}}(-1)[n]
$
and
$
 \mF_\gamma = \mE_1 = \mO_{\bP_{\bB}}.
$

We construct a hypersurface $Y$ of weighted degree $\gamma = |\bB|$ in $\mathP_{\bB}$
by means of a "transposition" of the Newton polyhedron $\Delta(f_0).$ 
First we consider a  $n \times (n+1)$ matrix defined by columns $\alpha_0(j) = \alpha(j) - \alpha(n+2), j \in [1;n+1],$
for (\ref{fpoly}) whose rows we denote by $\Fb(i), i \in [1;n]$
\begin{equation} [\alpha_0(1), \cdots, \alpha_0(n+1)] = \left [\begin {array}{c} 
\Fb(1)\\
\vdots\\
\Fb(n)\\
\end {array}\right ].
\label{Fb}
\end{equation}
The polynomial with generic coefficients below, constructed from (\ref{Fb}),  is weighted homogenous  with respect to the weight system
$w(y_q) = B_q, q \in [1;n+1]$ with weight $|\bB| = \gamma$
\begin{equation}
 f^T(y) = y^{\b1}(\sum_{i=1}^{n} b_{i} y^{\Fb(i)} + b_{n+1}). 
\label{fT}
\end{equation}
This can be seen from the relations $<\Fb(i), \bB> =0, \forall  i \in [1;n],$
$<\b1, \bB> =\gamma.$

Let $Y \subset {\mathbb P}_\bB$ a hypersurface defined by a weighted polynomial of weight $\gamma = |\bB|$
$$ Y = \{ y \in \mathP_{\bB};  f^T(y) =0\}.$$
If it is smooth, then it is a Calabi-Yau manifold. 
Under the standard definition of the Poincar\'e polynomial  $P_{Y}({\mathsf t})$ of a weighted homogenous hypersurface  (\cite[3.4]{Dolg}) 
the following equality is established;
\begin{equation}
P_{Y}({\mathsf t})  = \frac{(1 - {\mathsf t}^\gamma )}{ \prod_{q=1}^{n+1} (1- \mathsf t ^{B_q})}
= (-1)^n  \frac{X_{\infty}({\mathsf t})}{X_{0}({\mathsf t})} =  (-1)^n  \frac{\bar X_{\infty}({\mathsf t})}{\bar X_{0}({\mathsf t})}.
\label{poincare}
\end{equation}

In considering the derived restrictions
$
\{ {\bar \mF}_i \}_{i=1}^\gamma
$
of $\{ \mF_i \}_{i=1}^\gamma$ to $Y,$
that split-generate the derived category $D^b \coh Y$
of coherent sheaves on $Y$, we see 
that the Stokes matrix for (\ref{Jug}), (\ref{Jt1}) is given by
$$
 S_{ij}
=(\sigma_i, \sigma_j)
  = \chi({\bar \mF}_i, {\bar \mF}_j)
  = \chi(\mF_i, \mF_j)
     + (-1)^{n-1} \chi(\mF_j, \mF_i)
  = \chi(\mF_i, \mF_j)
$$
for $i < j,$ $S_{ii}=1$ and $S_{ij}=0$ for $i>j.$ These numbers are given as intersection number of vanishing cycles
used to define Lefschetz thimbles in Definition \ref{Lefschetz}.
The $\gamma \times \gamma$ matrix
\begin{equation}
 \FX = \left(\chi({\bar \mF}_i, {\bar \mF}_j) \right)_{i,j=1}^\gamma   = \left(S_{ij} + (-1)^{n-1}S_{ji} \right)_{i,j=1}^\gamma  
\label{FX}
\end{equation}
corresponds to the Hermitian invariant of the monodromy group $H_{\gamma, \bB}$ (\cite[Proposition 4.1]{TaU13}) satisfying,
$$ h^T \cdot { \FX} \cdot  {\bar h} ={ \FX}$$
for every $h \in  { H}_{\gamma, \bB} \subset GL( \gamma, \bZ).$
We recall that we can assume $h = \bar h  \in  { H}_{\gamma, \bB}$ by virtue of   (\ref{eq:hinfty}), (\ref{eq:h0}),  (\ref{eq:b}).
The space of Hermitian invariants of   ${H}_{\gamma, \bB}$
is one dimensional and generated by (\ref{FX}).

In summary,  by applying 
\cite[Theorem 5.1]{TaU13} to our situation, we get the following.
\begin{thm}
1) The Stokes matrix $(S_{ij})_{i, j=1}^\gamma$ for the quantum cohomology
of the weighted projective space  $\mathP_{\bB}$ is equivalent to that for the oscillating integral (\ref{Jug}).
 
2) This Stokes matrix
is given by the Gram matrix
of the full exceptional collection $(\mF_i)_{i=1}^\gamma$ on $\mathP_\bB$
with respect to the Euler form;
\begin{equation} \label{eq:stokes}
 S_{ij} =  \chi( \mF_i, \mF_j ).
\end{equation}
\label{Dubrovin}
\end{thm}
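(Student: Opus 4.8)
The plan is to deduce the theorem from the machinery already assembled, essentially by transporting \cite[Theorem 5.1]{TaU13} through the identifications established in \S\ref{oscillating} and \S\ref{WHP}. First I would recall from \S\ref{oscillating} that the equation (\ref{Jt1}) satisfied by the quantum $J$-function of $\mathP_\bB$ coincides, after the change of variables $e^{t_1}=(zu)^\gamma$, with the equation (\ref{Jug}) governing the oscillating integral $J_{u^2,\Gamma}(u)$ attached to the Landau--Ginzburg potential $f_0$. This already gives part 1): the solution spaces, and hence the Stokes structures at the irregular singularity $u=\infty$, are literally the same. The Lefschetz thimbles $\{\Gamma_1,\dots,\Gamma_\gamma\}$ of Definition \ref{Lefschetz} form a basis of $H_n(\bT^n,\Re(uf_0)\gg 0;\bZ)$, and the Stokes matrix $(S_{ij})$ is, by construction, the matrix of intersection numbers of the associated vanishing cycles, normalised so that $S_{ii}=1$ and $S_{ij}=0$ for $i>j$.

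For part 2) I would argue as follows. The monodromy group $H_{\gamma,\bB}\subset GL(\gamma,\bZ)$ of $Ker\,\tilde R_{(1,0)}(t,\vartheta_t)$ is generated by the explicit matrices $H_\infty$, $H_0$ of (\ref{eq:hinfty}), (\ref{eq:h0}), whose characteristic data are dictated by the factorisation (\ref{eq:b}) of $\prod_{q=1}^{n+1}(\t^{B_q}-1)$; the existence of the cyclic vector $v$ satisfying the compatibility relations (\ref{eq:condition}) is exactly \cite[Proposition 2.3]{TaU13}. On the other side, the derived restrictions $\{\bar\mF_i\}_{i=1}^\gamma$ of the right dual exceptional collection $\{\mF_i\}$ to the Calabi--Yau hypersurface $Y\subset\mathP_\bB$ (constructed via the ``transposed'' Newton data (\ref{Fb}), (\ref{fT})) split-generate $D^b\coh Y$, and the Euler pairing of this collection is triangular:
\begin{equation}
\chi(\bar\mF_i,\bar\mF_j)=\chi(\mF_i,\mF_j)+(-1)^{n-1}\chi(\mF_j,\mF_i),
\end{equation}
so that for $i<j$ one gets $\chi(\bar\mF_i,\bar\mF_j)=\chi(\mF_i,\mF_j)$ because $\chi(\mF_j,\mF_i)=0$ by the duality characterisation of $\{\mF_i\}$ together with the strong exceptionality of $\{\mE_i\}=\{\mO,\dots,\mO(\gamma-1)\}$. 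Thus both the Stokes matrix of (\ref{Jug}) and the Gram matrix $(\chi(\mF_i,\mF_j))$ are upper-triangular unipotent matrices whose associated ``Hermitian'' symmetrisation (\ref{FX}) spans the one-dimensional space of $H_{\gamma,\bB}$-invariant Hermitian forms (using \cite[Proposition 4.1]{TaU13}). Since an upper-triangular unipotent matrix is determined by its symmetrisation, and since the Poincar\'e polynomial identity (\ref{poincare}) pins down the remaining normalisation, the two matrices must agree up to the action of the braid group (mutations) and sign changes — which is precisely the equivalence asserted in \cite[Theorem 5.1]{TaU13}. Applying that theorem verbatim yields $S_{ij}=\chi(\mF_i,\mF_j)$, hence (\ref{eq:stokes}).

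The main obstacle I anticipate is not any single computation but the verification that the hypotheses of \cite[Theorem 5.1]{TaU13} genuinely hold in the present generality: concretely, that the Stokes matrix of the oscillating integral (\ref{Jug}) (rather than of the abstract quantum differential equation) is the one realised by the intersection form on the Lefschetz thimbles of $f_0$, and that the split-generation of $D^b\coh Y$ by $\{\bar\mF_i\}$ together with the one-dimensionality of the invariant-form space survives when $Y$ is only quasi-smooth in the weighted projective space $\mathP_\bB$. This requires checking that $\Delta(f_0)$ is reflexive (equivalently $\mathP_\bB$ Fano), which by Lemma \ref{polar} follows when $\gamma/B_j\in\bZ$ for all $j$, and invoking the condition (\ref{gcdB}) $h.c.f.\,\bB=1$ so that $\mathP_{\Delta^o(f_0)}=\mathP_\bB$; with these in place the cited theorem applies and the proof concludes.
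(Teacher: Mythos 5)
Your proposal is correct and follows essentially the same route as the paper: identify the oscillating-integral equation (\ref{Jug}) with the quantum differential equation (\ref{Jt1}) of $\mathP_{\bB}$ via \cite{CCLT}, then transport \cite[Theorem 5.1]{TaU13} using the Lefschetz-thimble/vanishing-cycle description of the Stokes matrix, the restriction $\chi(\bar{\mF}_i,\bar{\mF}_j)=\chi(\mF_i,\mF_j)+(-1)^{n-1}\chi(\mF_j,\mF_i)$ to the Calabi--Yau hypersurface $Y$, and the one-dimensionality of the space of $H_{\gamma,\bB}$-invariant Hermitian forms generated by (\ref{FX}). Your added remarks (unipotent triangular matrices determined by their symmetrisation, the reflexivity/Fano side condition) are harmless but not needed beyond the hypotheses (\ref{gcdB}) already assumed in \S\ref{WHP}.
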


This generalises a conjecture proposed by Dubrovin \cite{Dub2} for Fano manifolds 
that has been proven first by D.Guzzetti \cite{Guz} for the case of the projective space
(see \cite{Tan04} also).
 H.Iritani \cite[Remark 4.13]{Iritani_ISQCMSTO} mentions  the correspondence between Lefschetz thimbles $\Gamma_i$ and exceptional collection of coherent sheaves $\mF_i$ for the case of a weighted projective space. This is a consequence of the assertion that there exist $\mG_1, \cdots, \mG_\gamma$ in the Grothendieck group $K(\mathP_\bB)$  such that
$\chi(\mG_i, \mG_j ) = S_{ij}$ \cite[Theorem 4.11, Corollary 4.12]{Iritani_ISQCMSTO}. 
Thus the above Theorem \ref{Dubrovin} can be considered as a concrete realisation of Iritani's theorem.
In \cite{Iritani_ISQCMSTO}, however, we find no precise description of $(\mG_i)_{i=1}^\gamma$ except the case $\mathP_{\bB} = \mathP^n.$

In view of the formulation of Gamma conjectures \cite[Definition 4.6.1]{GammaConj} it would be desirable to
give a  newly adapted version of the above Theorem \ref{Dubrovin}. 

Our further quest in this direction shall be explained
in an article in preparation where we consider the 
analytic continuation of the Mellin-Barnes integral representation of the quantum cohomology, i.e. solution to the differential equation with 
irregular singularities.  
 This can be understood as a trial to develop the argument that the authors of 
 \cite{GammaConj} followed till $(6.2.5)$ in \cite[6.2]{GammaConj}. Afterwards the authors abandoned this way as they reduced the 
 quantum cohomology
 to the oscillating integral, i.e. the monodromy is expressed in terms of Lefschetz thimbles. 
 
 Our new approach in progress uses the Mellin-Barnes integrals that can be recovered from \eqref{R10}, \eqref{Jug}:
 $$ \frac{1}{2 \pi i} \int_{-\epsilon - i\infty}^{-\epsilon + i\infty} u^z \prod_{q=1}^{n+1} \Gamma(-B_q z) dz, \;\; \epsilon >0.  $$
 The monodromy (or Stokes data) are expressed in terms of the analytic continuation of the integral containing product of Gamma factors. 
 
\begin{remark}
The situation explained in this section can be summarised into a diagram as follows
$$\begin{array}{ccc}
\gamma=rk H^{n}(\bT^n \setminus Z_{f_0}),  f_0: LG\; potential & \iff  & \mathP_{\bB}, rk \;K(\mathP_{\bB}) = \gamma\\
\bar \gamma = rk PH^{n-1}(\bar Z_{f_0}),  Delsarte:\bar Z_{f_0} \subset \mathP_{\Delta(f_0)} &\iff&  C.Y.: Y \subset  \mathP_{\bB}, rk \;\iota_\ast K (\mathP_{\bB}) = \bar \gamma.\\
\end{array}$$
For $\iota : Y \hookrightarrow \mathP_{\bB}$  the inclusion
we denoted with $\iota_\ast K (\mathP_{\bB})$ the subgroup of $K(\mathP_{\bB})$
generated by $\{ [\iota_\ast {\mathcal{O}}_Y(i)] \}_{i \in \bZ}$.
The correspondence "$\iff$" indicates mirror symmetry in certain sense (Givental' $I=J$ mirror \cite{CCLT},  \cite{Iritani_ISQCMSTO}, Batyrev dual polytope mirror \cite{Baty2} or expected homological mirror symmetry by M. Kontsevich \cite[11.D.]{Kontsevich_HAMS}). 

\end{remark}

\vspace{\fill}

\noindent

\begin{flushleft}
\begin{minipage}[t]{6.2cm}
  \begin{center}
{\footnotesize
Department of Mathematics,\\
Galatasaray University,\\
\c{C}{\i}ra$\rm\breve{g}$an cad. 36,\\
Be\c{s}ikta\c{s}, Istanbul, 34357, Turkey.\\
{\it E-mails}:   {tanabe@gsu.edu.tr}}
\end{center}
\end{minipage}
\end{flushleft}

\end{document}